\newlist{paraenum}{enumerate}{1}
\setlist[paraenum]{wide, label=(\arabic*)}
\patchcmd{\@setaddresses}{\indent}{\noindent}{}{}
\patchcmd{\@setaddresses}{\indent}{\noindent}{}{}
\patchcmd{\@setaddresses}{\indent}{\noindent}{}{}
\patchcmd{\@setaddresses}{\indent}{\noindent}{}{}
\numberwithin{equation}{section}
\theoremstyle{plain}
\newtheorem{theorem}{Theorem}[section]
\newtheorem{conjecture}[theorem]{Conjecture}
\newtheorem{corollary}[theorem]{Corollary}
\newtheorem{lemma}[theorem]{Lemma}
\newtheorem{proposition}[theorem]{Proposition}
\theoremstyle{definition}
\newtheorem{remark}[theorem]{Remark}
\newenvironment{example}
  {\pushQED{\qed}\examplex}
  {\popQED\endexamplex}
\newtheorem{definition}[theorem]{Definition}
\newcommand{\cc}{\mathbb{C}}
\newcommand{\zz}{\mathbb{Z}}
\newcommand{\kk}{\mathbb{K}}
\newcommand{\pp}{\mathbb{P}}
\newcommand{\cm}{\mathcal{M}}
\newcommand{\ci}{\mathcal{I}}
\newcommand{\cv}{\mathcal{V}}
\newcommand{\cb}{\mathcal{B}}
\DeclareMathOperator{\rank}{rank}
\DeclareMathOperator{\image}{image}
\DeclareMathOperator{\adj}{adj}
\DeclareMathOperator{\Adj}{Adj}
\newcommand{\monbar}{\overline{\mathcal{M}}_{0, n}}
\newcommand{\pt}{\mathrm{PT}}
\newcommand{\lc}{\mathrm{LC}}
\newcommand{\inv}{\mathrm{inv}}
\newcommand{\gr}{\mathrm{Gr}}
\DeclareRobustCommand{\abinom}{\genfrac{\langle}{\rangle}{0pt}{}}
\definecolor{benpurple}{RGB}{180, 0, 240}
\title{Parke--Taylor Varieties}
\author{Benjamin Hollering}
\address{Max Planck Institute for Mathematics in the Sciences, Inselstr. 22, 04103 Leipzig, Germany}
\email{benjamin.hollering@mis.mpg.de}
\author{Dmitrii Pavlov}
\address{Technische Universit\"at Dresden, Zellescher Weg 12--14, 01069 Dresden, Germany}
\email{dmitrii.pavlov@mis.mpg.de}
\date{}
\begin{document}

\begin{abstract}
Parke--Taylor functions are certain rational functions on the Grassmannian of lines encoding MHV amplitudes in particle physics. For $n$ particles there are $n!$ Parke--Taylor functions, corresponding to all orderings of the particles. Linear relations between these functions have been extensively studied in the last years. We here describe all non-linear polynomial relations between these functions in a simple combinatorial way and study the variety parametrized by them, called the Parke--Taylor variety. We show that the Parke--Taylor variety is linearly isomorphic to the log canonical embedding of the moduli space $\overline{\mathcal{M}}_{0,n}$ due to Keel and Tevelev, and that the intersection with the algebraic torus recovers the open part, $\mathcal{M}_{0,n}$. We give an explicit description of this isomorphism. Unlike the log canonical embedding, this Parke--Taylor embedding respects the symmetry of the $n$ marked points and is constructed in a single-step procedure, avoiding the intermediate embedding into a product of projective spaces.
\end{abstract}

\maketitle

\section{Introduction}
A Parke--Taylor function is a rational function of the form 
$$f_\sigma = \dfrac{1}{\prod\limits_{i=1}^n p_{\sigma_i\sigma_{i+1}}},$$
where $p_{ij}$ are the Pl\"ucker coordinates on the Grassmannian of lines $\mathrm{Gr}(2,n)$, $\sigma$ is a permutation on $[n]=\{1,\ldots,n\}$ and the indices are understood modulo $n$. In particle physics, Parke--Taylor functions are key components of the Parke--Taylor formula \cite{parke1986amplitude}, a strikingly simple expression for \emph{maximally helicity violating (MHV)} amplitudes. In this context $n$ is the number of particles in a scattering process. They also arise in numerous related contexts in scattering amplitudes, such as the CHY formalism \cite{cachazo2014scattering}, the momentum amplituhedron \cite{damgaard2021kleiss}, and CEGM amplitudes \cite{cachazo2024color} to name a few. 

For a scattering process of $n$ particles there are $n!$ Parke--Taylor functions, one for each ordering of the particles. Linear relations between these $n!$ functions, also called linear \emph{Parke--Taylor identities}, are also of interest in the aforementioned contexts, and have been studied in both the physics \cite{cachazo2019delta,AH2015nonplanar,frost2021lie,cachazo2024color,damgaard2021kleiss} and mathematics \cite{parisi2024magic} literature. However, to the best of our knowledge, there is no systematic description of \emph{polynomial} relations between the $n$-point Parke--Taylor functions. The main goal of this paper is to present such a description. 

More precisely, we are interested in the polynomial relations that are not implied by the linear Parke--Taylor identities. It is well-known that the $(n-2)!$ Parke--Taylor functions $f_\sigma$ with $\sigma_1=1$ and $\sigma_2=2$ form a basis of the $\mathbb{C}$-linear space spanned by all $n!$ Parke--Taylor functions. This is ensured by the Kleiss--Kuijf relations (see e.g. \cite[Lemma 2.2]{frost2021lie} or \cite{damgaard2021kleiss}). We will therefore only describe relations between such functions. The methods we use come from computational (nonlinear \cite{michalek2021invitation}) algebra. We define an \emph{algebraic variety} in $\mathbb{P}^{(n-2)!-1}$ \emph{parametrized} by the Parke--Taylor functions. That is, we consider the image of the map $\mathrm{Gr}(2,n)\dashrightarrow \mathbb{P}^{(n-2)!-1}$ given by these functions. We call this image the \emph{open Parke--Taylor variety} and denote it by $\mathrm{PT}^\circ_n$. Its Zariski closure (that is, the smallest projective variety containing it) is then called the \emph{Parke--Taylor variety} and is denoted by  $\mathrm{PT}_n$. We then \emph{implicitize} these varieties, that is, find their defining equations. For details on the definition of Parke--Taylor varieties, see Section \ref{sec:prelim}.

While our study of Parke--Taylor varieties is primarily motivated by physics, they turn out to be quite interesting objects from a purely mathematical point of view. The ambient space of the Parke--Taylor variety $\mathrm{PT}_n$ is the projective space $\mathbb{P}^{(n-2)!-1}$. This is also the ambient space of the log canonical embedding of the moduli space of stable $n$-pointed rational curves $\overline{\mathcal{M}}_{0,n}$ constructed by Keel and Tevelev \cite{keel2009equations}. The moduli space $\overline{\mathcal{M}}_{0,n}$ is a compactification of $\mathcal{M}_{0,n}$, the moduli space of $n$ marked points on $\mathbb{P}^1$, known as the \emph{Deligne--Knudsen--Mumford} compactification.
The log canonical embedding is arguably the most standard way of realizing $\overline{\mathcal{M}}_{0,n}$ as a subvariety in the projective space. 
In Theorem \ref{thm:log-canon-Parke--Taylor} we show that the Parke--Taylor variety $\mathrm{PT}_n$ is linearly isomoprhic to the log canonical embedding $\mathrm{LC}_n$ of $\overline{\mathcal{M}}_{0,n}$, and that the linear isomorphism has an elegant, explicit combinatorial description.
We note that the construction of the log canonical embedding involves iteratively applying Kapranov's map $\monbar\to\overline{\mathcal{M}}_{0,n-1}$ \cite{kapranov1993veronese} obtained by forgetting one marked point and stabilizing the resulting $n-1$-pointed curve. This breaks the symmetry of the $n$ marked points on a stable curve by forgetting them one at a time. 
Our construction is on the contrary almost completely symmetric: the only choice we make is that of the two fixed points $1$ and $2$. 
This can also be avoided by considering the variety in $\mathbb{P}^{n!-1}$ parametrized by all $n!$ Parke--Taylor functions, which makes the Parke--Taylor variety invariant under the action of $S_n$ on the indices of the Parke--Taylor functions.
Another important feature of this \emph{Parke--Taylor embedding} is that the open Parke--Taylor variety recovers the moduli space $\mathcal{M}_{0,n}$ (Theorem \ref{thm:PTM0n}). 

The task of describing the nonlinear Park--Taylor identities is therefore the task of describing the equations of a specific projective embedding of $\monbar$. 
For various embeddings of this moduli space, this question has been extensively studied in the literature, and we now give a brief overview of the results.
For the log canonical embedding, Monin and Rana \cite{monin2017equations} conjectured a set of equations that define it scheme-theoretically, and verified this conjecture for $n\leq 8$.
This conjecture was proven for all $n$ by Gillespie, Griffin and Levinson in \cite{gillespie2022proof}. 
In \cite{maclagan2010chow}, Gibney and Maclagan embed $\monbar$ into a (rather high-dimensional) toric variety and describe the ideal of this embedding up to saturation. 
As noted in \cite[Section 1]{monin2017equations}, this embedding is different from the log canonical embedding. 
Both these embeddings stem from the fact that $\monbar$ is a Chow quotient of the Grassmannian of lines $\mathrm{Gr}(2,n)$ \cite{kapranov1993chow}. 
Finally, in \cite{vakil2009equations} the authors study a rather general question of determining the equations for the moduli space of $n$ points on a line in the presence of weights, and their starting point is representing the moduli space as a GIT quotient of a product of projective lines.

The explicit linear isomorphism between the Parke--Taylor variety $\mathrm{PT}_n$ and the log canonical embedding $\mathrm{LC}_n$ of $\monbar$, in combination with the results of \cite{monin2017equations, gillespie2022proof}, suggests one way of describing the defining equations of $\mathrm{PT}_n$. One can just take the equations of $\mathrm{LC}_n$ and apply the linear isomorphism from Theorem \ref{thm:log-canon-Parke--Taylor} to them. This description, however, is not particularly explicit. The log canonical embedding is the composition of an embedding $\monbar \hookrightarrow \mathbb{P}^1\times\ldots\times\mathbb{P}^{n-3}$ and the Segre embedding $\mathbb{P}^1\times\ldots\times\mathbb{P}^{n-3}\hookrightarrow \mathbb{P}^{(n-2)!-1}$, and \cite{monin2017equations, gillespie2022proof} concentrate on describing the equations of $\monbar$ in the product of projective spaces, before the Segre embedding. The work of Keel and Tevelev \cite{keel2009equations} provides a way to transform these into equations in $\mathbb{P}^{(n-2)!-1}$, which is compactly described in \cite[Section 1]{monin2017equations}. However, this transformation is not entirely trivial: for instance, the equations of Monin and Rana are cubic; however, the ideal in $\mathbb{P}^{(n-2)!-1}$ is quadratically generated \cite[Theorem 8.6]{keel2009equations}. For this reason, we opt for presenting a more explicit description of the ideal of $\mathrm{PT}_n$. Here we use the fact that the Parke--Taylor embedding is defined in a more straightforward way than the log canonical embedding (and the embeddings of \cite{maclagan2010chow} and \cite{vakil2009equations}): it is the closure of the image of a single rational (essentially, monomial) map from $\mathrm{Gr}(2,n)$ directly into $\mathbb{P}^{(n-2)!-1}$.

Our description gives the ideal of $\mathrm{PT}_n$ up to saturation by the product of the variables. It is based on the following observation: the map $\varphi_n:\mathrm{Gr}(2,n)\dashrightarrow \mathbb{P}^{(n-2)!-1}$ defining $\mathrm{PT}_n$ is a monomial map in the inverses $\frac{1}{p_{ij}}$ of the Pl\"ucker variables. This map can be extended to the whole Pl\"ucker space $\mathbb{P}^{\binom{n}{2}-1}$. The closure of the image of this extended map is a toric variety, which we denote by $T_n$, and its defining equations are binomial. We then have a natural inclusion $\mathrm{PT}_n\subset T_n$. The equations of $\mathrm{PT}_n$ therefore come in two flavors: those of the first type are the binomial equations of $T_n$, and those of the second type are \emph{lifts} of the Pl\"ucker relations defining $\mathrm{Gr}(2,n)$ inside $\mathbb{P}^{\binom{n}{2}-1}$. We describe these two types of equations in Sections \ref{sec:toric} and \ref{sec:idealPT} respectively.

Finally, we note that this topic also has an algebraic statistics flavor. The real positive points of the Parke--Taylor variety $\mathrm{PT}_n$ can be regarded as a statistical model for ranked data as considered in \cite{sturmfels2012statistical}. We note that such a \emph{Parke--Taylor model} is distinct from the three toric models studied in \cite{sturmfels2012statistical} for all $n \geq 5$. In particular, the toric Parke--Taylor model is lower-dimensional than the other models and thus its ideal is generated by more polynomials, which we conjecture have unbounded degree as $n$ grows. 

The remainder of this article is organized as follows. In Section \ref{sec:prelim} we give some basic notions of computational algebra and algebaric geometry and define Parke--Taylor varieties.
Although the notions we describe are quite basic from the point of view of an algebraist, we find it useful to present them here to make the article accessible to a broad range of readers, including those from the physics community.
In Section \ref{sec:PTM0n} we connect Parke--Taylor varieties to moduli spaces. Theorem \ref{thm:PTM0n} establishes an isomorphism between the open Parke--Taylor variety $\mathrm{PT}_n^\circ$ and $\mathcal{M}_{0,n}$, and Theorem \ref{thm:log-canon-Parke--Taylor} gives a linear isomorphism between $\mathrm{PT}_n$ and the log canonical embedding $\mathrm{LC}_n$ of $\monbar$. We use this to establish the degree of $\mathrm{PT}_n$ and show that its ideal is generated by quadrics.
In Section \ref{sec:toric} we take a first step towards describing the equations that cut out $\mathrm{PT}_n$. 
More precisely, we describe all \emph{binomial} relations that hold on this variety. 
They cut out the \emph{toric Parke--Taylor variety} $T_n$ which contains $\mathrm{PT}_n$. 
These binomial relations admit a particularly simple combinatorial description, which we present in Proposition \ref{prop:toric-gens-via-adjs}. 
In Theorem \ref{thm:toricquadratics} we describe a set of \emph{quadratic} binomials, which we conjecture is enough to generate the ideal of $T_n$ for $n\geq 7$ up to saturation by the product of the variables. 
We verify this for $n\leq 9$.
Finally, in Section \ref{sec:idealPT} we describe the full ideal of the Parke--Taylor variety. 
In Theorem \ref{thm:fullideal} we show that to generate the ideal of $\mathrm{PT}_n$ it is enough to know all the binomial relations (that is, the ideal of $T_n$) and the \emph{lift} of an $n-3$-dimensional linear subspace in $\mathrm{Gr}(2,n)$ naturally containing $\mathcal{M}_{0,n}$ (or, by Remark \ref{cor:just-lift-pluckers}, simply of $\mathrm{Gr}(2,n)$).
We carefully describe the full ideal of the Parke--Taylor variety $\mathrm{PT}_6$ in Example \ref{example:fullideal6}.

\section{Preliminaries} \label{sec:prelim}

The goal of this paper is to describe all algebraic relations between the $n$-point Parke--Taylor functions. In this section we formulate this goal in the language of varieties and ideals. In what follows we write $\mathbb{C}^n$ and $\mathbb{P}^n$ for the complex $n$-dimensional affine and projective space respectively. The ring of polynomials in the variables $x_1,\ldots,x_n$ is denoted by $\mathbb{C}[x_1,\ldots,x_n]$. 

\begin{definition}
    An \emph{affine algebraic variety} in $\mathbb{C}^n$ is the common zero set of finitely many polynomials $f_i\in\mathbb{C}[x_1,\ldots,x_n]$. A \emph{projective algebraic variety} in $\mathbb{P}^n$ is the common zero set of finitely many homogeneous polynomials $f_i\in\mathbb{C}[x_0,\ldots,x_n]$.
\end{definition}  

\begin{definition}
    Let $R$ be a commutative ring. A subset $I\subseteq R$ is called an \emph{ideal} if $a+b\in I$ and $ac\in I$ for any $a,b\in I$ and $c\in R$. 
\end{definition}

Affine and projective algebraic varieties are geometric objects that are algebraically described by the set of polynomials that vanish on a variety. It is straightforward to check that the set of all polynomials vanishing on a variety is an ideal of the corresponding polynomial ring. For a variety $X$ we will denote this ideal by $\mathcal{I}(X) := \{f \in \cc[x_1, \ldots, x_n] ~:~ f(x) = 0 \text{ for all } x \in X\}$. The variety defined by an ideal $I$ will in turn be denoted by $\mathcal{V}(I)$. An introductory reference on ideals and varieties is \cite{cox1997ideals}. We now give some related definitions that will be of use for us.

\begin{definition}
    Let $S\subseteq \mathbb{C}^n$ or $\mathbb{P}^n$. The \emph{Zariski closure} of $S$ is the smallest (affine resp. projective) variety containing $S$, i.e. the intersection of all varieties containing $S$. 
\end{definition}

Most of the varieties we study in this paper will be \emph{parameterized}, meaning they are given as the closure of the image of a rational map $\varphi: \pp^m \dashrightarrow \pp^n$. Every such polynomial map of projective (or affine) spaces induces a corresponding map of polynomial rings, $\varphi^\ast: \cc[x_1, \ldots, x_n] \to \cc[t_1, \ldots, t_m]$ which is given by $\varphi^\ast(x_i) = \varphi_i$ where $\varphi_i$ is the $i$-th coordinate function of the rational map $\varphi$. Moreover, it holds that $\ker(\varphi^\ast) = \ci(\image(\varphi))$. In many cases, the maps we study will be rational and thus we will need to account for the fact that our map is not defined on the locus given by the denominators. One tool we will use to deal with this problem is the following. 

\begin{definition}
    Let $R$ be a commutative ring, $I\subset R$ be an ideal and $a\in R$. The \emph{saturation} of $I$ by $a$ is
    $$I:a^\infty := \{r\in R\ |\ \exists n\in\mathbb{Z}_{\geq 0}\ :\ ra^n\in I \}.$$
    Geometrically, when $R$ is a polynomial ring, saturation corresponds to removing the irreducible components of $\mathcal{V}(I)$ contained in the variety defined by $a$.
\end{definition}

Throughout the paper we will sometimes use more advanced notions from algebraic geometry and combinatorial commutative algebra. However, since these will only appear in proofs or remarks, we refrain from defining all of them here.  

We now define two classical varieties that will play a role in our story. 
The first one is the \emph{Grassmannian} $\mathrm{Gr}(k,n)$, the algebraic variety parametrizing $k$-dimensional linear subspaces of $\mathbb{C}^n$. It can be embedded into $\mathbb{P}^{\binom{n}{k}-1}$ as follows. For every $k$-dimensional subspace $V$ we pick a $k\times n$ matrix whose rows span the space and send it to the vector of its maximal minors, called \emph{Pl\"ucker coordinates}. The resulting point in $\mathbb{P}^{\binom{n}{k}-1}$ does not depend on the choice of the matrix. We denote Pl\"ucker coordinates by $p_{i_1\ldots i_k}$, where the indices $i_j$ indicate which columns were taken to form the minor. The defining equations of $\mathrm{Gr}(k,n)$ are called \emph{Pl\"ucker relations}. For more details on the Grassmannian, see e.g. \cite[Chapter 5]{michalek2021invitation}. For the purposes of this article we will be interested in the Grassmannian of lines $\mathrm{Gr}(2,n)$. For this Grassmannian, the Pl\"ucker relations are particularly simple: they have the form $p_{ij}p_{kl}-p_{ik}p_{jl}+p_{il}p_{jk}=0$ for all choices of indices such that $1\leq i < j<k<l\leq n$. 

The second variety of interest is $\mathcal{M}_{0,n}$, the moduli space of $n$ marked points on $\mathbb{P}^1$. The configurations of $n$ points are considered up to projective transformations, which allows to fix three of the points to be $0,1$ and $\infty$. This allows to identify $\mathcal{M}_{0,n}$ with the subset of $\mathrm{Gr}(2,n)$ given by matrices of the form 
$$
\begin{pmatrix}
    1 & 1 & 1 & \ldots & 1 & 0\\
    0 & 1 & x_1 & \ldots & x_{n-3} & 1
\end{pmatrix}.
$$
In fact, $\mathcal{M}_{0,n}$ is isomorphic to the quotient of $\mathrm{Gr}(2,n)$ by the column scaling action of the algebraic torus $(\mathbb{C}^*)^n$. That is, $\mathcal{M}_{0,n}\cong \mathrm{Gr}(2,n)/(\mathbb{C}^*)^n$. We have that $\mathcal{M}_{0,n}$ is an irreducible variety of dimension $n-3$. We note that for $n\geq 5$, $\mathcal{M}_{0,n}$ is a \emph{very affine} variety, i.e. a closed subvariety of the algebraic torus. As such, it is not compact. A natural compactification is the moduli space of stable $n$-pointed rational curves $\monbar$, defined by by Deligne, Knudsen and Mumford, which is a projective variety. More details on $\mathcal{M}_{0,n}$ and its role in particle physics are available in the introductory reference \cite{lam2024moduli}. 

We will now define Parke--Taylor functions as certain rational functions on $\mathrm{Gr}(2,n)$. More precisely, let $S_n$  be the symmetric group on $[n]$. For every element $\sigma \in S_n$ we define the Parke--Taylor function $f_\sigma$ by the following formula:
$$f_\sigma = \prod_{i=1}^n \dfrac{1}{p_{\sigma_i\sigma_{i+1}}},$$
where $p_{\sigma_i\sigma_{i+1}}$ are Pl\"ucker coordinatres on $\mathrm{Gr}(2,n)$. The indices in this formula are understood modulo $n$, i.e. $\sigma_{n+1}:=\sigma_1$. 

We are interested in describing all polynomial relations between the Parke--Taylor functions $f_\sigma$ for a fixed $n$. Linear relations between these functions are relatively well-understood. In particular, as explained in the Introduction, a basis of the $\mathbb{C}$-linear space spanned by the Parke--Taylor functions is given by the functions $f_{\sigma}$ with $\sigma_1=1$ and $\sigma_2=2$. In order to reduce the computational complexity of our problem, we will only consider polynomial relations between such Parke--Taylor functions. These can be thought of as ``non-linear relations that do not follow from the linear ones''. 

\begin{definition}\label{def:pt}
    Let $\Sigma_n$ denote the subgroup of $S_n$ consisting of the permutations that fix the first two elements. Note that $\Sigma_n\cong S_{n-2}$ and in particular $|\Sigma_n|=(n-2)!$. We now consider the $((n-2)!-1)$-dimensional projective space $\mathbb{P}^{(n-2)!-1}$ whose coordinates $z_\sigma$ are labeled by the elements of $\Sigma_n$. We then define the rational map 
$\varphi_n: \mathrm{Gr}(2,n)\dashrightarrow \mathbb{P}^{(n-2)!-1}$ by $V\mapsto z$, where $z_\sigma(V) = f_\sigma(V)$. The image $\varphi_n(\mathrm{Gr}(2,n))$ is called the \emph{open Parke--Taylor variety} $\mathrm{PT}^\circ_n$. Its Zariski closure in $\mathbb{P}^{(n-2)!-1}$ is called the \emph{Parke--Taylor variety} $\mathrm{PT}_n$.
\end{definition}
In the language of commutative algebra, describing all polynomial relations that hold between the Parke--Taylor functions means describing the ideal of $\mathrm{PT}_n$.

\subsection{The Weak Order on $S_n$}
In \cref{sec:PTM0n}, we will make some connections between the Parke--Taylor variety $\pt_n$ and the well-known \emph{weak order} which is a partial order on the symmetric group $S_n$. We give a brief description of this partial order on permutations here and refer the reader to \cite{bjornerbrenticoxeter} for more information. We note that the description we give here is tailored specifically to the weak order on the symmetric group, instead of a general Coxeter group, since that will be our focus in this paper.

Let $\sigma \in S_n$ be a permutation and denote the \emph{value-wise} inversion set of $\sigma$ by $\inv(\sigma) = \{(\sigma_i, \sigma_j) ~:~  i > j \text{ and } \sigma_i < \sigma_j \}$ and the \emph{position-wise} inversion set of $\sigma$ to be $\inv'(\sigma) = \{(i, j) ~:~  i < j \text{ and } \sigma_i > \sigma_j \}$.

\begin{definition}
Let $S_n$ be the symmetric group on $n$ elements. Then the \emph{left weak order} is the poset $(S_n, \leq' )$ defined by $\sigma \leq' \tau$ if and only if $\inv'(\sigma) \subseteq \inv'(\tau)$. Similarly, the \emph{right weak order} is the poset $(S_n, \leq)$ defined by $\sigma \leq \tau$ if and only if $\inv(\sigma) \subseteq \inv(\tau)$.
\end{definition}

It is well known that the weak order can be defined for any Coxeter group and that for finite Coxeter groups, it is also an order-theoretic lattice \cite{bjornerbrenticoxeter}. The weak order has many deep connections to various other combinatorial objects, such as its appearance as the 1-skeleton of the \emph{permutohedron}, which is pictured in \cref{fig:weak-order}. 

\begin{definition}
A \emph{lower order ideal} in a poset $(P, \leq)$ is a set $I \subset P$ such that if $s \in P$ and $t \leq s$ then $t \in P$. A set $S = \{s_1, \ldots, s_k\}$ generates the lower order ideal if $I = \{t \in P ~:~ t \leq s \text{ for some } s \in S\}$ and we write $I = \langle s_1, \ldots, s_k \rangle$.  A lower order ideal $I$ is \emph{principal} if $I = \langle s \rangle$. 
\end{definition}

\begin{figure}
\centering
\begin{tikzpicture}[scale = .5, thick]
    \draw (3,0)--(5,2);
    \draw (3,0)--(1,2);
    \draw (5,3)--(5,5);
    \draw (1,3)--(1,5);
    \draw (1,6)--(3,8);
    \draw (5,6)--(3,8);
    \draw (3,0) node[below]{$123$};
    \draw (1,2) node[above] {$213$};
    \draw (5,2) node[above] {$132$};
    \draw (1,5) node[above] {$312$};
    \draw (5,5) node[above] {$231$};
    \draw (3,8) node[above] {$321$};
\end{tikzpicture}
~~~~~
\begin{tikzpicture}[scale = .5, thick]
    \draw (3,0)--(5,2);
    \draw (3,0)--(1,2);
    \draw (5,3)--(5,5);
    \draw (1,3)--(1,5);
    \draw (1,6)--(3,8);
    \draw (5,6)--(3,8);
    \draw (3,0) node[below]{$123$};
    \draw (1,2) node[above] {$213$};
    \draw (5,2) node[above] {$132$};
    \draw (1,5) node[above] {$231$};
    \draw (5,5) node[above] {$312$};
    \draw (3,8) node[above] {$321$};
\end{tikzpicture}
\caption{The left and right weak order on $S_3$ pictured on the left and right respectively.}
\label{fig:weak-order}
\end{figure}
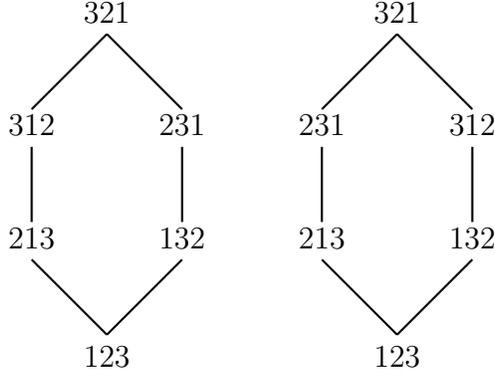

\section{Parke--Taylor varieties and $\monbar$} \label{sec:PTM0n}
In this section we describe some geometric properties of the Parke--Taylor variety $\mathrm{PT}_n$. In particular, we show that the open Parke--Taylor variety $\pt_n^\circ$ is isomorphic to the moduli space of $n$ marked points on the projective line, $\cm_{0,n}$. Moreover, the Parke--Taylor variety $\pt_n$ is isomorphic to the Deligne--Kundsen--Mumford compactification, $\monbar$ and thus gives a new embedding of $\monbar$ into $\pp^{(n-2)! - 1}$ which is linearly isomorphic to the well known embedding of Keel and Tevelev \cite{keel2009equations}.
We then exploit this relationship between $\pt_n$ and $\monbar$ to obtain the degree and dimension of $\pt_n$. In \cref{sec:idealPT} we will further elaborate on how this structure also allows us to better understand the generators of the vanishing ideal $\ci(\pt_n)$, and thus determine polynomial relations which hold for Parke--Taylor functions. 

\newpage

\begin{theorem} \label{thm:PTM0n}
    The open Parke--Taylor variety $\mathrm{PT}^\circ_n$ is isomorphic to the moduli space $\mathcal{M}_{0,n}$. 
\end{theorem}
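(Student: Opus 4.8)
The plan is to realise $\cm_{0,n}$ and $\pt_n^\circ$ as the images of two mutually inverse morphisms, using the identification $\cm_{0,n}\cong\mathrm{Gr}(2,n)/(\cc^*)^n$ recalled in \cref{sec:prelim}. Write $\mathrm{Gr}(2,n)^\circ$ for the open locus where all Pl\"ucker coordinates are nonzero. Since every pair $\{i,j\}$ is a cyclic edge of some $\sigma\in\Sigma_n$, the tuple $(f_\sigma(V))_{\sigma\in\Sigma_n}$ is a well-defined point of $\pp^{(n-2)!-1}$ exactly when $V\in\mathrm{Gr}(2,n)^\circ$, and there it lies in the open torus $(\cc^*)^{(n-2)!-1}\subset\pp^{(n-2)!-1}$; thus $\varphi_n$ is a morphism $\mathrm{Gr}(2,n)^\circ\to(\cc^*)^{(n-2)!-1}$ with image $\pt_n^\circ$. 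The column-scaling torus acts on $\mathrm{Gr}(2,n)^\circ$ by $p_{ij}\mapsto t_it_jp_{ij}$, and because the cyclic product defining $f_\sigma$ visits each index exactly twice one gets $f_\sigma\mapsto(\prod_{i=1}^nt_i^2)^{-1}f_\sigma$, a rescaling independent of $\sigma$. Hence $\varphi_n$ is $(\cc^*)^n$-invariant and descends to a morphism $\bar\varphi_n\colon\cm_{0,n}=\mathrm{Gr}(2,n)^\circ/(\cc^*)^n\to\pt_n^\circ$, which is surjective by construction. It therefore suffices to produce a morphism $\psi\colon\pt_n^\circ\to\cm_{0,n}$ with $\psi\circ\bar\varphi_n=\mathrm{id}$: such a $\psi$ is then automatically a two-sided inverse of the surjection $\bar\varphi_n$, and $\bar\varphi_n$ is an isomorphism.

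To build $\psi$, use the chart of \cref{sec:prelim} that identifies $\cm_{0,n}$ with $\{(x_1,\dots,x_{n-3}):x_i\notin\{0,1\},\ x_i\ne x_j\}$ via the displayed $2\times n$ matrix; a direct determinant computation gives $x_i=p_{1,i+2}\,p_{2n}/(p_{12}\,p_{i+2,n})$. The obstacle is that $p_{12}$ is a cyclic edge of \emph{every} $\sigma\in\Sigma_n$, so it divides every $f_\sigma$ and cancels in all ratios $z_\sigma/z_\tau$; hence $p_{12}$ itself is invisible on $\pt_n^\circ$. The key step is to eliminate it using the three-term Pl\"ucker relation $p_{12}p_{i+2,n}=p_{1,i+2}p_{2n}-p_{1n}p_{2,i+2}$, which rewrites the coordinate as
\[
x_i=\frac{1}{1-w_i},\qquad w_i:=\frac{p_{1n}\,p_{2,i+2}}{p_{1,i+2}\,p_{2n}}.
\]
Now $w_i$ is visible: pick $\sigma\in\Sigma_n$ whose cyclic order makes $1$ adjacent to both $2$ and $i+2$ and makes $2$ adjacent to $n$ (such a Hamiltonian cycle through the edge $\{1,2\}$ exists whenever $n\ge4$), and let $\tau\in\Sigma_n$ be obtained by deleting the edges $\{1,i+2\},\{2,n\}$ and inserting $\{1,n\},\{2,i+2\}$; this is again a Hamiltonian cycle through $\{1,2\}$, and since $E(\sigma)$ and $E(\tau)$ agree outside these four edges the ratio of coordinates $z_\sigma/z_\tau$ equals $\prod_{e\in E(\tau)}p_e\,/\,\prod_{e\in E(\sigma)}p_e=w_i$. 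Thus $w_i$ is a regular function on $\pt_n^\circ$, and $1-w_i=p_{12}p_{i+2,n}/(p_{1,i+2}p_{2n})$ is a ratio of nonzero Pl\"ucker coordinates, hence nowhere vanishing on $\pt_n^\circ$; so $\psi:=((1-w_i)^{-1})_{i=1}^{n-3}$ is a morphism $\pt_n^\circ\to\cc^{n-3}$.

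It remains to check that $\psi$ maps into $\cm_{0,n}$ and inverts $\bar\varphi_n$. The identity $x_i=(1-w_i)^{-1}$ gives $\psi\circ\bar\varphi_n=\mathrm{id}_{\cm_{0,n}}$ directly. For $\psi(\pt_n^\circ)\subseteq\cm_{0,n}$: on $\pt_n^\circ$ we have $w_i\ne0$ (a ratio of nonzero coordinates), $w_i\ne1$ (otherwise the Pl\"ucker relation forces $p_{12}p_{i+2,n}=0$), and $w_i\ne w_j$ for $i\ne j$ (a short computation with the Pl\"ucker relations shows $w_i=w_j$ forces the $(i+2)$-nd and $(j+2)$-nd marked points of the underlying configuration to coincide); each inequality persists on all of $\pt_n^\circ$ since, $\bar\varphi_n$ being surjective, its failure would force two marked points of a point of $\cm_{0,n}$ to collide. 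Hence the coordinates $x_i=(1-w_i)^{-1}$ lie in $\cc\setminus\{0,1\}$ and are pairwise distinct, so $\psi$ lands in $\cm_{0,n}$; as a left inverse of the surjection $\bar\varphi_n$ it is a two-sided inverse, and $\bar\varphi_n\colon\cm_{0,n}\xrightarrow{\,\sim\,}\pt_n^\circ$ is an isomorphism. The cases $n\le3$ are trivial, both spaces being single points. The main point to watch is the observation that $p_{12}$ is invisible to the Parke--Taylor map and has to be recovered via a Pl\"ucker relation, together with the small combinatorial lemma producing the pair $\sigma,\tau$; the remaining verifications are routine.
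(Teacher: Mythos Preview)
Your argument is correct and follows the same core idea as the paper: both show that the map $\varphi_n$ descends to $\mathcal{M}_{0,n}$ and then invert it by recovering cross-ratios as ratios of Parke--Taylor coordinates. The paper's proof takes a general quadruple $(i,j,k,l)$, considers two permutations differing by the transposition $(ij)$ to produce the cross-ratio $[ij|kl]=p_{ik}p_{jl}/(p_{jk}p_{il})$, and invokes the Kleiss--Kuijf relations when the resulting permutations fall outside $\Sigma_n$; it then appeals to the fact that cross-ratios determine a unique point of $\mathcal{M}_{0,n}$. Your version is more explicit in one respect: by taking $\sigma=12\,n\,a_1\cdots a_{n-4}\,(i{+}2)$ and $\tau=12\,(i{+}2)\,a_{n-4}\cdots a_1\,n$ you exhibit the needed cross-ratio $w_i$ as a ratio of two coordinates that already lie in $\Sigma_n$, so no linear relations are needed. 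On the other hand, the detour through the Pl\"ucker identity to reach the specific chart coordinate $x_i=(1-w_i)^{-1}$ is not essential: the $w_i$ are themselves cross-ratios and already separate points of $\mathcal{M}_{0,n}$, so one could define $\psi$ directly via the $w_i$.

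One small correction: the Parke--Taylor functions use the ordered symbols $p_{\sigma_k\sigma_{k+1}}$, so reversing the middle block introduces a sign; a direct computation gives $z_\sigma/z_\tau=(-1)^{n-3}w_i$ rather than $w_i$. This does not affect the argument (replace $w_i$ by $(-1)^{n-3}w_i$ throughout, or choose $\sigma,\tau$ with the opposite orientation), but the statement ``$z_\sigma/z_\tau=\prod_{e\in E(\tau)}p_e/\prod_{e\in E(\sigma)}p_e$'' tacitly treats $p_e$ as unordered and should be adjusted.
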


\begin{proof}
    Note that the map $\varphi_n: \mathrm{Gr}(2,n)\dashrightarrow \mathrm{PT}_n$ is not birational, its fibers have positive dimension. This stems from the fact that scaling the columns of a matrix representative of $V\in\mathrm{Gr}(2,n)$ does not change the point $\varphi_n(V)$ in $\mathrm{PT}_n$. This means that $\varphi_n$ induces a dominant rational map $\mu_n: \mathrm{Gr}(2,n)/(\mathbb{C}^*)^n\dashrightarrow \mathrm{PT}_n\subset \mathbb{P}^{(n-2)!-1}$. The domain $\mathrm{Gr}(2,n)/(\mathbb{C}^*)^n$ is the moduli space $\mathcal{M}_{0,n}$. The map $\mu_n$ is in fact birational. To show this, for any quadruple of indices $(i,j,k,l)$, consider two Parke--Taylor functions that correspond to permutations that only differ by the transposition $(ij)$ so that the first one involves the product $p_{ki}p_{ij}p_{jl}$ and the second one the product $p_{kj}p_{ij}p_{il}$, with all the other variables in both functions being the same. Then the ratio of these two Parke--Taylor functions is the cross-ratio $[ij|kl]=\dfrac{p_{ik}p_{jl}}{p_{jk}p_{il}}$. In case one of the Parke--Taylor functions in this ratio corresponds to a permutation that does not fix $1$ and $2$, we express it a linear combination of the Parke--Taylor functions that have this property. These cross-ratios define a unique point of $\mathcal{M}_{0,n}$ \cite[Proposition 1.6]{lam2024moduli}, which shows that $\mu_n$ is invertible on $\mathrm{PT}^\circ_n$, i.e. is an isomorphism between $\mathcal{M}_{0,n}$ and $\mathrm{PT}^\circ_n$. Since $\mathcal{M}_{0,n}$ is a rational variety and $\mathrm{PT}^\circ_n$ is open and Zariski dense in $\mathrm{PT}_n$, the variety $\mathrm{PT}_n$ is rational and in particular irreducible. 
\end{proof}

\begin{remark}
\label{rem:restrict-to-m0n}
One consequence of the above theorem is that we may restrict the domain of the parametrization $\varphi_n$ to $\cm_{0, n}$ without changing the image. Recall that $\cm_{0, n}$ can be identified with the subset of $\gr(2, n)$ whose elements are represented by matrices of the form:
\[
\begin{pmatrix}
    1 & 1 & 1 & \cdots & 1 & 1\\
    p_1 & p_2 & p_3 & \cdots &p_{n-1} &p_n
\end{pmatrix},
\]
where one puts $p_1=0,\ p_2=1,\ p_n=\infty$ (this last equality can also be interpreted as replacing the last column with $(0\ 1)^T$) and $p_i\neq p_j$.
The Pl\"ucker embedding of a matrix of this form is then of course given by $p_{ij} = p_i - p_j$ and so $\varphi_n$ can be rewritten as
\begin{align*}
\varphi_n: \cm_{0, n} &\to \pp^{(n-2)! - 1} \\
            (p_1, \ldots p_n) &\mapsto \left(  \prod_{i=1}^n \dfrac{1}{p_{\sigma_i} - p_{\sigma_{i+1}}}  ~:~ \sigma \in \Sigma_n \right).
\end{align*}
Throughout the remainder of this paper we will abuse notation and denote by $\varphi_n$ the map which appears in \cref{def:pt} and that which is above, as the images of these maps coincide. 
\end{remark}

Since $\mathrm{PT}_n^\circ$ is isomorphic to $\cm_{0, n}$, a natural next question is to determine if the closure, $\pt_n$ is isomorphic to the corresponding closure $\monbar$. Surprisingly, this is indeed the case and thus the parametrization of the Parke--Taylor variety in \Cref{def:pt} gives a new embedding of $\monbar$. We show this by exhibiting a linear isomorphism between $\pt_n$ and the \emph{log canonical embedding} of $\monbar$ introduced by Keel and Tevelev \cite{keel2009equations}. We begin with a brief overview of the log canonical embedding of $\monbar$. For a more detailed description, we refer to \cite{keel2009equations, monin2017equations}.

Recall that the moduli space $\monbar$ parametrizes \emph{stable $n$-pointed rational curves} which are represented as tuples $(C, p_1, \ldots, p_n)$. In \cite{keel2009equations}, the authors introduced a map $\Omega_n$, called the \emph{iterated Kapranov embedding} which embeds $\monbar$ into the product of projective spaces $\pp^1 \times \pp^2 \times \cdots \times \pp^{n-3}$. This map naturally arises from repeatedly applying the Kapranov morphism $\Psi_n: \overline{\cm_{0, n}} \to \pp^{n-3}$ which was introduced in \cite{kapranov1993veronese} and the forgetful map $\pi_n: \monbar \to \overline{\cm_{0, n-1}}$ which is given by forgetting the last point $p_n$ and stabilizing the curve $C$. The following theorem of \cite{keel2009equations} will allow us to describe $\Omega_n$ explicitly in coordinates.

\begin{theorem}\cite[Corollary 2.7]{keel2009equations}
The map $\Omega_n: \monbar \to \pp^1 \times \pp^2 \times \cdots \times \pp^{n-3}$ is a closed embedding. 
\end{theorem}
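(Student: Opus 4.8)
The plan is to prove the statement by induction on $n$, reducing at each stage to a \emph{relative} very-ampleness statement on a universal curve. The base case is $n=4$: there $\overline{\cm_{0,4}}\cong\pp^1$ and $\Omega_4$ is the cross-ratio isomorphism, so the claim is trivial. For the inductive step, recall that by construction $\Omega_n=(\Omega_{n-1}\circ\pi_n,\ \Psi_n)$, so it factors as
\[
\monbar \xrightarrow{\ (\pi_n,\,\Psi_n)\ }\ \overline{\cm_{0,n-1}}\times\pp^{n-3}\ \xrightarrow{\ \Omega_{n-1}\times\operatorname{id}\ }\ \pp^1\times\cdots\times\pp^{n-3}.
\]
By the inductive hypothesis $\Omega_{n-1}$ is a closed embedding, hence so is $\Omega_{n-1}\times\operatorname{id}$, and a composition of closed embeddings is a closed embedding; so it suffices to show that $g:=(\pi_n,\Psi_n)$ is a closed embedding.

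To this end I would use Knudsen's classical identification of the forgetful morphism $\pi_n\colon\monbar\to\overline{\cm_{0,n-1}}$ with the universal curve $\mathcal{C}_{n-1}\to\overline{\cm_{0,n-1}}$: this is a flat, proper family whose fibres are connected nodal curves of arithmetic genus $0$ (trees of $\pp^1$'s), namely the curves being parametrized, with their nodes and the marked points $p_1,\dots,p_{n-1}$ as special points. Under this identification $\Psi_n$ is the morphism to $\pp^{n-3}$ cut out by a line bundle $\mathcal L_n$ on $\monbar$ (an explicit combination of $\psi$-classes and boundary divisors, described in \cite{keel2009equations}) together with $n-2$ distinguished global sections; thus $g$ is exactly the $\overline{\cm_{0,n-1}}$-morphism into $\overline{\cm_{0,n-1}}\times\pp^{n-3}$ determined by $\mathcal L_n$ and these sections. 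The induction will therefore close once I show that $\mathcal L_n$ is relatively very ample for $\pi_n$, and that the distinguished sections restrict to a basis of global sections on each fibre, so that the target is the \emph{constant} $\pp^{n-3}$.

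This verification has a geometric part and a formal part. For the geometric part I would restrict $\mathcal L_n$ to a fibre $F$ of $\pi_n$ and check that $\deg(\mathcal L_n|_E)\ge 1$ for every irreducible component $E\subseteq F$; this is a finite combinatorial computation from the explicit formula for $\mathcal L_n$ and the boundary stratification of $\monbar$. By the standard theory of line bundles on treelike genus-$0$ curves, such strict positivity implies that $\mathcal L_n|_F$ is very ample (so $\Psi_n|_F$ is a closed immersion into $\pp^{n-3}$), that $H^1(F,\mathcal L_n|_F)=0$, and hence that $h^0(F,\mathcal L_n|_F)=\deg(\mathcal L_n|_F)+1=n-2$ is independent of $F$. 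For the formal part, cohomology and base change then give that $\pi_{n*}\mathcal L_n$ is locally free of rank $n-2$ with formation commuting with base change; together with the distinguished sections this is what makes $g$ a morphism into a \emph{constant} $\pp^{n-3}$ rather than merely a projective bundle. Finally, $\monbar$ is projective, so $g$ is proper; being fibrewise a closed immersion it is unramified and injective on points, hence a proper monomorphism, hence a closed embedding. This completes the inductive step, and with it the theorem.

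The step I expect to be the main obstacle is the fibrewise positivity $\deg(\mathcal L_n|_E)\ge 1$ on the \emph{degenerate} fibres of $\pi_n$: one must confirm that Kapranov's line bundle never drops to degree $0$ on a component of a boundary fibre, which is precisely where the combinatorics of trees of $\pp^1$'s and of the $\overline{\cm_{0,n-1}}$-boundary enter. An alternative to the proper-monomorphism endgame is to prove directly that the sub-linear-system of $|\mathcal O(1,\dots,1)|$ spanned by products of the Kapranov sections separates points and tangent vectors of $\monbar$; this, however, reduces to the same positivity input.
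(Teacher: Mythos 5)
The paper does not prove this statement; it is quoted verbatim from Keel--Tevelev \cite[Corollary 2.7]{keel2009equations}, so there is no in-paper argument to compare against. Your proposal is essentially the original Keel--Tevelev/Kapranov argument and is sound: induct on $n$, factor $\Omega_n$ through $(\pi_n,\Psi_n)$, identify $\pi_n$ with the universal curve over $\overline{\cm_{0,n-1}}$, and reduce to relative very ampleness of the Kapranov bundle plus cohomology and base change. One remark: the step you flag as the main obstacle is in fact immediate. Under the universal-curve identification, the Kapranov bundle $\psi_n$ restricts on a fibre $C$ of $\pi_n$ to $\omega_C(p_1+\cdots+p_{n-1})$, whose degree on a component $E$ is $\#\{\text{special points of }E\}-2$, and this is $\geq 1$ precisely by the stability condition on $(n-1)$-pointed curves; the total degree is $n-3$, giving $h^0=n-2$ and $h^1=0$ as needed. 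So no separate combinatorial analysis of boundary strata is required, and the rest of your argument (very ampleness of a fibrewise-positive bundle on a tree of $\pp^1$'s, locally free pushforward trivialized by Kapranov's sections, proper monomorphism $\Rightarrow$ closed immersion) goes through as written.
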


As noted in \cite{monin2017equations}, since $\cm_{0, n}$ is Zariski dense in $\monbar$, and $\Omega_n$ is a closed embedding, it suffices to describe $\Omega_n$ on $\cm_{0, n}$. Throughout the remainder of this section we will use the following explicit description of the log canonical embedding due to \cite{monin2017equations}.

\begin{definition}\cite{keel2009equations}
The \emph{log canonical embedding} $\Phi_n$ of $\monbar$ is the composition of the Segre embedding $\xi$ of $\pp^1 \times \pp^2 \times \cdots \times \pp^{n-3}$ with the \emph{iterated Kapranov embedding} $\Omega_n$ which is
\begin{align*}
\Omega_n: \monbar &\to \pp^1 \times \pp^2 \times \cdots \times \pp^{n-3} 
\end{align*}
where for a tuple $(C, p_1, \ldots, p_n) \in \cm_{0, n}$ the $i$-th component is given by $(\Omega_n(C, p_1, \ldots, p_n))_i = [\frac{p_1 -p_2}{p_{i+3}-p_2}:\cdots:\frac{p_1 -p_{i+2}}{p_{i+3}-p_{i+2}}]$. We denote the closure of the image of $\Phi_n$ by $\lc_n = \overline{\mathrm{image}(\Phi_n))}$. 
\end{definition}

\begin{example}
\label{ex:log-canon-5}
Consider the case $n = 5$ in which the log canonical embedding $\Phi_n$ is given by the composition $\Phi_5 = \xi  \circ  \Omega_5$ where $\xi$ is the Segre embedding of $\pp^1 \times \pp^2$ and $\Omega_5: \overline{\cm_{0, 5}} \to \pp^1 \times \pp^2$ is the iterated Kapranov embedding which is given by
\[
\Omega_5(p_1 \ldots, p_5) = \left[\frac{p_1 - p_2}{p_4 - p_2} : \frac{p_1 - p_3}{p_4 -p_3}\right], \left[\frac{p_1 - p_2}{p_5 - p_2} : \frac{p_1 - p_3}{p_5 -p_3} : \frac{p_1 - p_4}{p_5 -p_4}\right]
\]
for 5 marked points $(p_1, \ldots, p_5) \in \cm_{0, n}$. Applying the Segre embedding to the image of $\Omega_5$, yields the log canonical embedding which is then given by
\begin{align*}
t_{11} = \frac{(p_1 - p_2)^2}{(p_4 - p_2)(p_5 - p_2)},~~  t_{12} = \frac{(p_1 - p_2)(p_1 - p_3)}{(p_4 - p_2)(p_5 - p_3)},~~  t_{13} = \frac{(p_1 - p_2)(p_1 - p_4)}{(p_4 - p_2)(p_5 - p_4)},\\
t_{21} = \frac{(p_1 - p_3)(p_1 - p_2)}{(p_4 - p_3)(p_5 - p_2)},~~  t_{22} = \frac{(p_1 - p_3)^2}{(p_4 - p_3)(p_5 - p_3)},~~ t_{23} = \frac{(p_1 - p_3)(p_1 - p_4)}{(p_4 - p_3)(p_5 - p_4)}.
\end{align*}

One can verify with direct computation in a computer algebra system such as \texttt{Macaulay2} \cite{M2}, that the vanishing ideal of $\lc_5 = \mathrm{image}(\Phi_5)$ is generated by the 5 quadratics which appeared in \cite[Corollary 1.3]{monin2017equations}:
\begin{align*}
\ci(\lc_5) =  
\langle
t_{11}t_{12}-t_{12}t_{13}-t_{11}t_{22}+t_{11}t_{23}, t_{12}t_{21}-t_{21}t_{22}-t_{12}t_{23}+t_{21}t_{23}\\
-t_{11}t_{22}+t_{13}t_{22}+t_{21}t_{22}-t_{21}t_{23}, -t_{13}t_{21}+t_{11}t_{23}, t_{13}t_{22}-t_{12}t_{23}
\rangle. 
\end{align*}
Moreover, since this ideal is codimension 3 and is Gorenstein, there exists an antisymmetric matrix such that the ideal is generated by its $4 \times 4$ sub-Pfaffians \cite[Theorem 2.1]{buchsbaum1977algebra}. For $\ci(\lc_5)$, the following matrix realizes this result: 
\begin{align*}
M_\lc = \begin{pmatrix}
0& 0& t_{22}-t_{23}& -t_{11}+t_{13}& -t_{21}+t_{23} \\
0& 0&-t_{12}+t_{22}-t_{23}& t_{13}& t_{23} \\
-t_{22}+t_{23}& t_{12}-t_{22}+t_{23}& 0& -t_{22}+t_{23}& -t_{22} \\
t_{11}-t_{13}&-t_{13}& t_{22}-t_{23}& 0& t_{23} \\
t_{21}-t_{23}& -t_{23}& t_{22}&-t_{23}& 0
\end{pmatrix}.
\end{align*}
\end{example}

The second ingredient which we require to state our main theorem of this section is the following definition which we use in the construction of our linear map between $\lc_n$ and~$\pt_n$.

\begin{definition}
\label{defn:shuffle-sum}
Let $\inv(\sigma)$ denote the set of value-wise inversions set of $\sigma$, which is given by $\inv(\sigma) = \{(\sigma_i, \sigma_j) ~:~  i < j \text{ and } \sigma_i > \sigma_j \}$. Then for any $i \in [n-3]$ and $\sigma \in \Sigma_{n-1}$, let $\shuffle_i z_\sigma \in \cc[z_\tau ~:~ \tau \in \Sigma_n
]$ be the linear polynomial
\[
\displaystyle \shuffle_{i} z_\sigma = \sum_{\substack{\tau \in \Sigma_n \\ \tau \setminus \{n \} = \sigma \\ (i, n) \notin \inv(\tau)}} z_\tau. 
\]
\end{definition}

\begin{theorem}
\label{thm:log-canon-Parke--Taylor}
The Parke--Taylor variety $\pt_n$ is linearly isomorphic to the log canonical embedding $\lc_n$ and thus is a closed embedding of $\monbar$ into $\pp^{(n-2)! - 1}$. For $n \geq 6$, the linear isomorphism is given by the equations
\[
t_{i_1 \ldots i_{n-3}} = \sum_{\sigma \in B_{i_1 \ldots i_{n-4}}} \shuffle_{i_{n-3}+1} z_\sigma
\]
where $B_{i_1 \ldots i_{n-4}}$ is defined by $t_{i_1 \ldots i_{n-4}} = \sum_{\sigma \in B_{i_1 \ldots i_{n-4}}} z_\sigma$.
\end{theorem}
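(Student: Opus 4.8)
The plan is to reduce the claim to an identity of rational maps on the open stratum $\cm_{0,n}$ and prove it by induction on $n$. By \cref{thm:PTM0n} and \cref{rem:restrict-to-m0n}, $\pt_n=\overline{\varphi_n(\cm_{0,n})}$, where for a generic representative $p=(p_1,\dots,p_n)$ of a point of $\cm_{0,n}$ the coordinates of $\varphi_n(p)$ are the Parke--Taylor functions $f^{(n)}_\sigma=\prod_{i}(p_{\sigma_i}-p_{\sigma_{i+1}})^{-1}$ (indices modulo $n$), $\sigma\in\Sigma_n$; likewise $\lc_n=\overline{\Phi_n(\cm_{0,n})}$ since $\cm_{0,n}$ is dense in $\monbar$, and by the Monin--Rana description used in \cref{ex:log-canon-5} the Segre coordinate of $\Phi_n$ is $t_{i_1\dots i_{n-3}}=\prod_{k=1}^{n-3}\frac{p_1-p_{i_k+1}}{p_{k+3}-p_{i_k+1}}$. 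Let $L$ be the linear endomorphism of $\pp^{(n-2)!-1}$ given by the displayed formula. It suffices to show that $(i)$ $L$ is invertible and $(ii)$ $\Phi_n=L\circ\varphi_n$ as maps $\cm_{0,n}\to\pp^{(n-2)!-1}$; taking Zariski closures then yields $L(\pt_n)=\lc_n$, so $\pt_n$ is linearly isomorphic to $\lc_n$ and hence a closed embedding of $\monbar$ by \cite{keel2009equations}.

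For $(ii)$ I would combine two facts. First, the Kapranov factors $1,\dots,n-4$ of $\Omega_n$ involve only $p_1,\dots,p_{n-1}$ and therefore agree with those of $\Omega_{n-1}$ precomposed with the map forgetting $p_n$; consequently, up to a scalar common to all index tuples, $t^{(n)}_{i_1\dots i_{n-3}}=u_{i_{n-3}}\,t^{(n-1)}_{i_1\dots i_{n-4}}$, where $u_j=\frac{p_1-p_{j+1}}{p_n-p_{j+1}}$ is the $j$-th coordinate of the last Kapranov factor of $\Omega_n$. Second is the key computation: for $\sigma\in\Sigma_{n-1}$ and $i\in\{2,\dots,n-1\}$, evaluating at $z_\tau=f^{(n)}_\tau$ on $\cm_{0,n}$,
\[
\shuffle_i z_\sigma \;=\; \Bigl(\tfrac{1}{p_n-p_1}-\tfrac{1}{p_n-p_i}\Bigr)\,f^{(n-1)}_\sigma \;=\; \frac{p_1-p_i}{(p_n-p_1)(p_n-p_i)}\,f^{(n-1)}_\sigma\;=\;\frac{u_{i-1}}{p_n-p_1}\,f^{(n-1)}_\sigma .
\]
To prove this, observe that the $\tau\in\Sigma_n$ with $\tau\setminus\{n\}=\sigma$ are exactly the insertions of $n$ into one of the $n-2$ gaps of the cyclic word $\sigma$ other than the gap between the fixed entries $1$ and $2$; inserting $n$ between the values $a$ and $b$ replaces the cycle edge $(a,b)$ by $(a,n)$ and $(n,b)$, so $f^{(n)}_\tau=\frac{p_a-p_b}{(p_a-p_n)(p_n-p_b)}\,f^{(n-1)}_\sigma=\bigl(\frac{1}{p_a-p_n}+\frac{1}{p_n-p_b}\bigr)\,f^{(n-1)}_\sigma$. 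By \cref{defn:shuffle-sum}, $\shuffle_i z_\sigma$ sums these contributions over precisely the insertions that place $n$ to the right of the value $i$, i.e.\ over the consecutive run of gaps starting immediately after $i$ and running around the cycle to the gap before $1$; the bracketed terms then telescope to $\tfrac{1}{p_n-p_1}-\tfrac{1}{p_n-p_i}$, and, crucially, this depends on $i$ but not on $\sigma$.

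Given these, the inductive step is immediate: writing $B_{i_1\dots i_{n-4}}$ for the support of $t^{(n-1)}_{i_1\dots i_{n-4}}$ in the $z$-variables (the inductive hypothesis for $n-1$), we obtain, up to a common scalar,
\[
t^{(n)}_{i_1\dots i_{n-3}}=u_{i_{n-3}}\,t^{(n-1)}_{i_1\dots i_{n-4}}\;\propto\;u_{i_{n-3}}\!\!\!\sum_{\sigma\in B_{i_1\dots i_{n-4}}}\!\!\! f^{(n-1)}_\sigma\;\propto\;\sum_{\sigma\in B_{i_1\dots i_{n-4}}}\!\!\!\shuffle_{i_{n-3}+1}z_\sigma ,
\]
which is the asserted formula. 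The base case is $n=5$ (one may even begin from $n=4$, where $\lc_4\cong\pt_4\cong\pp^1$ and $t_1=z_{1234}+z_{1243}$, $t_2=z_{1234}$), and follows from the explicit computation recorded in \cref{ex:log-canon-5}.

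For $(i)$, the same block structure yields invertibility inductively: grouping the rows of $L$ by the truncated index $(i_1,\dots,i_{n-4})$ and the columns by $\tau\mapsto\tau\setminus\{n\}$, the $\bigl((i_1,\dots,i_{n-4}),\sigma\bigr)$-block vanishes unless $\sigma\in B_{i_1\dots i_{n-4}}$, and when it does not vanish it equals, after a $\sigma$-dependent permutation of its rows, the $(n-2)\times(n-2)$ upper-triangular all-ones matrix; hence $\det L=\pm(\det L^{(n-1)})^{\,n-2}\neq 0$. (Alternatively, $(i)$ follows softly from the non-degeneracy of $\lc_n$ in $\pp^{(n-2)!-1}$ --- which holds since its ideal is quadratically generated \cite{keel2009equations} --- forcing any $L$ with $L\circ\varphi_n=\Phi_n$ to be surjective, hence bijective.) I expect the main obstacle to be the bookkeeping in the key computation: reading off from \cref{defn:shuffle-sum} exactly which gaps receive $n$, and then checking that the telescoped coefficient $\frac{p_1-p_i}{(p_n-p_1)(p_n-p_i)}$ is independent of $\sigma$ --- this $\sigma$-independence, together with the compatibility $t^{(n)}=u\cdot t^{(n-1)}$ under forgetting a point, is what makes the induction close. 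A secondary, purely clerical, point is to carry the projective scaling ambiguities (from the normalizations of the Kapranov factors and of $\varphi_n$) consistently through the induction.
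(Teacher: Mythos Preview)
Your approach is essentially the same as the paper's: both argue by induction on $n$, using the factorization $t^{(n)}_{i_1\ldots i_{n-3}}\propto u_{i_{n-3}}\,t^{(n-1)}_{i_1\ldots i_{n-4}}$ of the Segre coordinate together with the telescoping identity that evaluates $\shuffle_i z_\sigma$ on $\cm_{0,n}$ to $\frac{p_1-p_i}{(p_n-p_1)(p_n-p_i)}\,f^{(n-1)}_\sigma$. The paper packages this identity as a separate lemma in Pl\"ucker coordinates (\cref{lemma:plucker-sum-shuffle} and \cref{cor:pt-recursive-shuffle}), whereas you compute it directly via partial fractions in the $p_i$; these are the same computation.

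One point worth noting: you explicitly verify that $L$ is invertible, which the paper's proof leaves implicit. Your block argument is correct once one observes the factorization $L_n=(L_{n-1}\otimes I_{n-2})\cdot D$, where $D$ is block-diagonal with the $(n-2)\times(n-2)$ upper-triangular all-ones blocks you describe (one per $\sigma\in\Sigma_{n-1}$); this gives $\det L_n=\pm(\det L_{n-1})^{n-2}$ as you claim. Your soft alternative via non-degeneracy of $\lc_n$ is also valid and arguably cleaner.
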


We delay the proof of this theorem and first give an example for $n = 5$ which serves as the base case for the iterative construction described in the theorem statement.

\begin{example}
\label{ex:LC-to-PT-5}
Let $\pt_5$ and $\lc_5$ be the closed Parke--Taylor variety and the log canoncial embedding of $\overline{\mathcal{M}}_{0.n}$ respectively. Then the linear map $L_5$ from $\lc_5$ to $\pt_5$ is given by:
\begin{align*}
L_5(t_{11}) &= z_{12345} + z_{12354} + z_{12435} + z_{12453} + z_{12534} + z_{12543} = \sum_{\substack{\sigma \\ \inv(\sigma) \subseteq \{(2,1), (3,1), (3,2)\}}} z_\sigma,\\
L_5(t_{12}) &= z_{12345} + z_{12354} + z_{12435}  = \sum_{\substack{\sigma \\ \inv(\sigma) \subseteq \{(2,1), (3,2)\}}} z_\sigma,\\
L_5(t_{13}) &= z_{12345} + z_{12435} + z_{12453} = \sum_{\substack{\sigma \\ \inv(\sigma) \subseteq \{(2,1), (3,1) \}}} z_\sigma, \\ 
L_5(t_{21}) &= z_{12345} + z_{12354} + z_{12534} = \sum_{\substack{\sigma \\ \inv(\sigma) \subseteq \{(3,1), (3,2)\}}} z_\sigma,\\
L_5(t_{22}) &= z_{12345} + z_{12354} = \sum_{\substack{\sigma \\ \inv(\sigma) \subseteq \{(3,2)\}}} z_\sigma,\\
L_5(t_{23}) &= z_{12345} = \sum_{\substack{\sigma \\ \inv(\sigma) \subseteq \emptyset}} z_\sigma.\\
\end{align*}
Furthermore, this linear map sends the vanishing ideal $\ci(\lc_5)$ of the log canonical embedding to $\ci(\pt_5)$. Since $L_5$ is a linear map, the resulting ideal is also generated by 5 quadratics:
\begin{align*}
\ci(\pt_5) = 
\langle 
z_{12354}z_{12435}+z_{12345}z_{12453}+z_{12354}z_{12453}, z_{12345}z_{12534}+z_{12345}z_{12543}+z_{12354}z_{12543},\\
z_{12354}z_{12435}+z_{12345}z_{12534}+z_{12435}z_{12534}, z_{12345}z_{12453}+z_{12345}z_{12543}+z_{12435}z_{12543},\\
z_{12354}z_{12435}+z_{12345}z_{12534}+z_{12435}z_{12534} 
\rangle.
\end{align*}
Note that the 5 quadratics above are not exactly the images of the 5 generators of $\ci(\lc_5)$ which appear in \cref{ex:log-canon-5} but they generate the same ideal. We will provide a detailed explanation of the quadratics which appear above in \cref{sec:idealPT} but here we note
that by the same theorem of \cite{buchsbaum1977algebra}, $\ci(\pt_5)$ can also be generated by the submaximal Pfaffians of a $5 \times 5$ antisymmetric matrix. In this case the following matrix suffices:
\begin{align*}
M_\pt = \begin{pmatrix}
0& -z_{12543}& -z_{12345}& -z_{12354}-z_{12534}& z_{12345} \\
z_{12543}& 0& -z_{12435}+z_{12543}& -z_{12543}& z_{12435}+z_{12453} \\
z_{12345}& z_{12435}-z_{12543}& 0& -z_{12534}& -z_{12435} \\ 
z_{12354}+z_{12534}& z_{12543}& z_{12534}& 0& 0 \\
-z_{12345}& -z_{12435}-z_{12453}&z_{12435}& 0& 0
\end{pmatrix}. 
\end{align*}
\end{example}

At first glance, one might hope that the linear map $L_5$ corresponds to a sum over permutations in a \emph{principal} lower order ideal in the \emph{right weak order} on the symmetric group $\Sigma_n \cong S_{n-2}$ \cite{bjornerbrenticoxeter}, but this is unfortunately not the case. However, each coordinate $t_{i_1 \ldots i_n}$ does correspond to a sum over permutations in a lower order ideal in the right weak order. We will expand upon this later in this section but we first develop a few results which will be core ingredients in the proof of \cref{thm:log-canon-Parke--Taylor}.

\begin{lemma}
\label{lemma:plucker-sum-shuffle}
Let $p_{ij}$ denote the Pl\"ucker coordinates on $\gr(2, n)$. Then for any $i$ such that $3 \leq i \leq n-1$ and $\sigma \in \Sigma_{n-1}$, it holds that
\[
\sum_{\substack{i \leq j \leq n-1}} \frac{p_{\sigma_j \sigma_{j + 1}}}{p_{\sigma_j n}p_{\sigma_{j +1} n}} = -\frac{p_{1 \sigma_i}}{p_{\sigma_i n} p_{1n}}.
\]
\end{lemma}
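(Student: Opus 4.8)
The identity to prove is
\[
\sum_{i \leq j \leq n-1} \frac{p_{\sigma_j \sigma_{j+1}}}{p_{\sigma_j n}\, p_{\sigma_{j+1} n}} = -\frac{p_{1\sigma_i}}{p_{\sigma_i n}\, p_{1n}},
\]
where $\sigma \in \Sigma_{n-1}$, so $\sigma_1 = 1$, $\sigma_2 = 2$, and $\sigma_{n-1} = n-1$ (wait — actually $\sigma$ is just a permutation of $\{1,\dots,n-1\}$ fixing $1,2$; but note $\sigma_n$ isn't defined for $\sigma \in \Sigma_{n-1}$, so the sum's top term at $j = n-1$ is $p_{\sigma_{n-1} \sigma_n}/\dots$... hmm, actually I think they must mean $\sigma_n$ is not used; let me reconsider). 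Actually the sum ranges $i \le j \le n-1$, and the summand at $j$ involves $\sigma_j$ and $\sigma_{j+1}$, so the largest index used is $\sigma_n$. For $\sigma \in \Sigma_{n-1}$, which is a permutation of $[n-1]$, presumably the convention $\sigma_n := \sigma_1 = 1$ is in force (cyclic indexing as in the Parke--Taylor definition). So the $j = n-1$ term is $\frac{p_{\sigma_{n-1}, 1}}{p_{\sigma_{n-1}, n}\, p_{1,n}}$.

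**Proof plan.**

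The plan is to prove this by a telescoping argument, after recognizing that each summand can be rewritten using the three-term Plücker relation on the index quadruple $\{1, \sigma_j, \sigma_{j+1}, n\}$. The key identity I would first establish is: for any three distinct indices $a, b, c$ (all different from each other),
\[
\frac{p_{ab}}{p_{ac}\, p_{bc}} = \frac{1}{p_{bc}} - \frac{1}{p_{ac}} \quad\text{in the chart } \mathcal{M}_{0,n},
\]
since there $p_{ab} = p_a - p_b$, so $\frac{p_a - p_b}{(p_a - p_c)(p_b - p_c)} = \frac{(p_a - p_c) - (p_b - p_c)}{(p_a - p_c)(p_b - p_c)} = \frac{1}{p_b - p_c} - \frac{1}{p_a - p_c}$. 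By \cref{rem:restrict-to-m0n} it suffices to verify the identity on $\mathcal{M}_{0,n}$, where Plücker coordinates are differences of the affine parameters; since both sides are rational functions on $\mathrm{Gr}(2,n)$ and $\mathcal{M}_{0,n}$ is dense, equality on $\mathcal{M}_{0,n}$ gives equality everywhere the expressions are defined. (Alternatively one stays on $\mathrm{Gr}(2,n)$ and uses the genuine three-term Plücker relation $p_{1\sigma_j}p_{\sigma_{j+1}n} - p_{1\sigma_{j+1}}p_{\sigma_j n} + p_{1n}p_{\sigma_j\sigma_{j+1}} = 0$, which upon dividing by $p_{\sigma_j n}p_{\sigma_{j+1}n}p_{1n}$ yields exactly $\frac{p_{\sigma_j\sigma_{j+1}}}{p_{\sigma_j n}p_{\sigma_{j+1}n}} = \frac{p_{1\sigma_{j+1}}}{p_{\sigma_{j+1}n}p_{1n}} - \frac{p_{1\sigma_j}}{p_{\sigma_j n}p_{1n}}$; I would present it this way to keep everything Plücker-intrinsic.)

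Then I would substitute this into the sum and telescope:
\[
\sum_{i \leq j \leq n-1} \frac{p_{\sigma_j\sigma_{j+1}}}{p_{\sigma_j n}\,p_{\sigma_{j+1}n}}
= \sum_{i \leq j \leq n-1} \left( \frac{p_{1\sigma_{j+1}}}{p_{\sigma_{j+1}n}\,p_{1n}} - \frac{p_{1\sigma_j}}{p_{\sigma_j n}\,p_{1n}} \right)
= \frac{p_{1\sigma_n}}{p_{\sigma_n n}\,p_{1n}} - \frac{p_{1\sigma_i}}{p_{\sigma_i n}\,p_{1n}}.
\]
Finally, using the cyclic convention $\sigma_n = \sigma_1 = 1$ (consistent with the Parke--Taylor indexing), the first term $\frac{p_{1\sigma_n}}{p_{\sigma_n n}p_{1n}} = \frac{p_{11}}{\cdots} = 0$ since $p_{11} = 0$, leaving exactly $-\frac{p_{1\sigma_i}}{p_{\sigma_i n}p_{1n}}$, as claimed.

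**Expected obstacle.**

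The only genuine subtlety is bookkeeping around the boundary term of the telescoping sum — i.e., being careful about what $\sigma_n$ means for an element $\sigma \in \Sigma_{n-1}$ and confirming that the stated convention makes the $j = n-1$ term collapse correctly. I would double-check this against how the $\shuffle_i$ operator and \cref{defn:shuffle-sum} use these permutations, to make sure the cyclic-vs-linear indexing convention matches. Once that is pinned down, the proof is a two-line telescoping calculation; the main work is simply stating the three-term Plücker relation on $\{1,\sigma_j,\sigma_{j+1},n\}$ in the right normalized form. No real difficulty is anticipated beyond this clerical care.
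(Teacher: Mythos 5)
Your proof is correct and is essentially the same as the paper's: both rest on the three-term Pl\"ucker relation for the quadruple $\{1,\sigma_j,\sigma_{j+1},n\}$ together with the cyclic convention $\sigma_n=\sigma_1=1$, which makes the $j=n-1$ term equal to $-p_{1\sigma_{n-1}}/(p_{1n}p_{\sigma_{n-1}n})$. The paper phrases the computation as iteratively absorbing terms from the right end, which is precisely your telescoping sum carried out step by step; your explicit telescoping form is just a cleaner packaging of the same argument.
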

\begin{proof}
The statement follows from iteratively finding common denominators and applying the Pl\"ucker relations to the left hand side. Observe that the sum of the last two terms in the above sum yields
\[
-\frac{p_{1 \sigma_{n-1}}}{p_{1n} p_{\sigma_{n-1} n}} +  \frac{p_{\sigma_{n-2} \sigma_{n-1}}}{p_{\sigma_{n-2} n } p_{\sigma_{n-1} n}} = \frac{-p_{1 \sigma_{n-1}} p_{\sigma_{n- 2} n} + p_{1 n} p_{\sigma_{n- 2} \sigma_{n - 1}}}{p_{1n} p_{\sigma_{n-2} n } p_{\sigma_{n-1} n}} = \frac{-p_{1 \sigma_{n-2}} p_{\sigma_{n-1} n}}{p_{1n} p_{\sigma_{n-2} n } p_{\sigma_{n-1} n}} =  \frac{-p_{1 \sigma_{n-2}}}{p_{1n} p_{\sigma_{n-2} n }}.
\]
Repeatedly applying this technique to the output of the previous step together with the last remaining term of the original sum then yields the desired result. 
\end{proof}

\begin{corollary}
\label{cor:pt-recursive-shuffle}
Let $\varphi_n$ be the parametrization of $\pt_n$ as defined in \cref{def:pt}. Then for any $3 \leq i\leq n-1$ and $\sigma \in \Sigma_{n-1}$, it holds that
\[
\varphi_{n-1}^\ast(z_\sigma) \frac{-p_{1 \sigma_i }}{p_{\sigma_i n} p_{1n}} = \varphi_n^\ast(\shuffle_{\sigma_i} z_\sigma),
\]
where $\shuffle_{\sigma_i} z_\sigma$ is the sum defined in \cref{defn:shuffle-sum}. 
\end{corollary}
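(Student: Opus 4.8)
The plan is to expand both sides as explicit rational functions in the Pl\"ucker coordinates and to collapse the right-hand side using \cref{lemma:plucker-sum-shuffle}. Throughout, write $\sigma=(\sigma_1,\dots,\sigma_{n-1})\in\Sigma_{n-1}$ with $\sigma_1=1,\ \sigma_2=2$, use the cyclic convention $\sigma_n:=\sigma_1=1$, and recall that $\varphi_{n-1}^\ast(z_\sigma)=\prod_{k=1}^{n-1}p_{\sigma_k\sigma_{k+1}}^{-1}$ and $\varphi_n^\ast(z_\tau)=\prod_{k=1}^{n}p_{\tau_k\tau_{k+1}}^{-1}$.

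\emph{Step 1: identify the monomials in $\varphi_n^\ast(\shuffle_{\sigma_i}z_\sigma)$.} First I would show that the permutations $\tau\in\Sigma_n$ indexing $\shuffle_{\sigma_i}z_\sigma$ are exactly the words $\tau^{(j)}:=(\sigma_1,\dots,\sigma_j,\,n,\,\sigma_{j+1},\dots,\sigma_{n-1})$ for $i\le j\le n-1$, obtained by inserting the letter $n$ into the slot immediately after $\sigma_j$. Indeed, every $\tau\in\Sigma_n$ with $\tau\setminus\{n\}=\sigma$ equals $\tau^{(j)}$ for a unique $j\in\{2,\dots,n-1\}$ (the slot after $\sigma_1$ is excluded because $\tau_2=2$), and in $\tau^{(j)}$ the letter $\sigma_i$ occupies position $i$ if $j\ge i$ and position $i+1$ if $j<i$, while $n$ occupies position $j+1$; hence $\sigma_i$ precedes $n$, i.e.\ $(\sigma_i,n)\notin\inv(\tau^{(j)})$, precisely when $j\ge i$. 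As $i\ge 3$, this yields $\shuffle_{\sigma_i}z_\sigma=\sum_{j=i}^{n-1}z_{\tau^{(j)}}$.

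\emph{Step 2: pull back and telescope.} Going from $\sigma$ to $\tau^{(j)}$ replaces the cyclic adjacency $\{\sigma_j,\sigma_{j+1}\}$ by the two adjacencies $\{\sigma_j,n\}$ and $\{n,\sigma_{j+1}\}$ and leaves all other cyclic adjacencies unchanged, so the Parke--Taylor monomial changes only by having $p_{\sigma_j\sigma_{j+1}}^{-1}$ replaced by $p_{\sigma_j n}^{-1}p_{n\sigma_{j+1}}^{-1}$; hence
\[
\varphi_n^\ast(z_{\tau^{(j)}})=\varphi_{n-1}^\ast(z_\sigma)\cdot\frac{p_{\sigma_j\sigma_{j+1}}}{p_{\sigma_j n}\,p_{n\sigma_{j+1}}},
\]
the extreme case $j=n-1$ (where $\sigma_{j+1}=\sigma_n=1$) being included and producing the eventual denominator $p_{1n}$. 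Summing this over $i\le j\le n-1$, pulling out the common factor $\varphi_{n-1}^\ast(z_\sigma)$, using antisymmetry $p_{n\sigma_{j+1}}=-p_{\sigma_{j+1}n}$ to bring the summands into the shape of \cref{lemma:plucker-sum-shuffle}, and then applying that lemma to collapse $\sum_{j=i}^{n-1}\frac{p_{\sigma_j\sigma_{j+1}}}{p_{\sigma_j n}\,p_{\sigma_{j+1}n}}=-\frac{p_{1\sigma_i}}{p_{\sigma_i n}\,p_{1n}}$, one obtains $\varphi_n^\ast(\shuffle_{\sigma_i}z_\sigma)=\varphi_{n-1}^\ast(z_\sigma)\cdot\frac{-p_{1\sigma_i}}{p_{\sigma_i n}\,p_{1n}}$, which is the assertion.

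I expect Step~1 to be the only genuinely non-routine part: one must turn the combinatorial condition $(\sigma_i,n)\notin\inv(\tau)$ into the statement that $n$ is inserted into a slot at or after position $i$, so that the resulting index set matches the range $i\le j\le n-1$ of \cref{lemma:plucker-sum-shuffle}. Step~2 is then a short computation whose only delicate point is keeping careful track of signs via the antisymmetry $p_{ij}=-p_{ji}$ of the Pl\"ucker coordinates.
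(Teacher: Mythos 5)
Your proof takes exactly the same route as the paper's: identify the terms of $\shuffle_{\sigma_i} z_\sigma$ with the insertions of $n$ into the slots $j\ge i$ of $\sigma$, observe that each insertion multiplies the Parke--Taylor monomial by $p_{\sigma_j\sigma_{j+1}}/(p_{\sigma_j n}p_{n\sigma_{j+1}})$, and collapse the resulting sum with \cref{lemma:plucker-sum-shuffle}. Your Step~1 is in fact more explicit than the paper, which simply asserts the bijection between insertion slots and the non-inversion condition without ruling out the slot between $\sigma_1=1$ and $\sigma_2=2$.

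One caveat about Step~2: as literally written, your sign bookkeeping does not close. Applying $p_{n\sigma_{j+1}}=-p_{\sigma_{j+1}n}$ to each of the $n-i$ summands puts a global factor of $-1$ in front of the sum $\sum_{j=i}^{n-1}p_{\sigma_j\sigma_{j+1}}/(p_{\sigma_j n}p_{\sigma_{j+1}n})$, so after invoking \cref{lemma:plucker-sum-shuffle} you obtain $\varphi_n^\ast(\shuffle_{\sigma_i}z_\sigma)=\varphi_{n-1}^\ast(z_\sigma)\cdot\bigl(+p_{1\sigma_i}/(p_{\sigma_i n}p_{1n})\bigr)$, the \emph{opposite} sign from the one you (and the corollary) assert; a direct numerical check with $n=5$, $\sigma=1234$, $i=3$ confirms the plus sign under the convention $p_{ab}=p_a-p_b$. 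This is not a defect of your method so much as a sign ambiguity inherited from the paper, whose own proof writes the monomial ratio with denominator $p_{\sigma_i n}p_{\sigma_{i+1}n}$ rather than $p_{\sigma_i n}p_{n\sigma_{i+1}}$, i.e.\ silently sorts the indices while \cref{lemma:plucker-sum-shuffle} uses the antisymmetric convention. The discrepancy is a uniform scalar and is harmless for the projective statement of \cref{thm:log-canon-Parke--Taylor}; but since you explicitly undertake to track antisymmetry, you must either carry the accumulated $(-1)$ through to the conclusion or state at the outset which sign convention for $p_{ab}$ with $a>b$ you are using.
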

\begin{proof}
First, observe that
\[
\varphi_{n-1}^\ast(z_\sigma) \frac{p_{\sigma_i \sigma_{i + 1}}}{p_{\sigma_i n}p_{\sigma_{i +1} n}} = \varphi_n^\ast(z_\tau),
\]
where $\tau$ is the permutation obtained from $\sigma$ by inserting the new letter $n$ in between $\sigma_{i}$ and $\sigma_{i+1}$. The set of permutations $\tau$ arising this way is exactly the same as the set of $\tau \in \Sigma_n$ such that $\tau \setminus \{n\} = \sigma$ and $(\sigma_i, n) \notin \inv(\tau)$. Multiplying both sides of the equality in \cref{lemma:plucker-sum-shuffle} by $\varphi_{n-1}^\ast(z_\sigma)$ then yields
\[
\varphi_{n-1}^\ast(z_\sigma)\frac{-p_{1 \sigma_i}}{p_{\sigma_i n} p_{1n}} = \varphi_{n-1}^\ast(z_\sigma)
\sum_{\substack{i \leq j \leq n-1}} \frac{p_{\sigma_j \sigma_{j + 1}}}{p_{\sigma_j n}p_{\sigma_{j +1} n}} = \varphi_n^\ast(\shuffle_{\sigma_i} z_\sigma).
\]

\end{proof}

\begin{remark}
\label{rem:alternate-pt-shuffle}
Note that the above corollary also holds if one uses the restriction of $\varphi_n$ to $\cm_{0, n}$ as discussed in \cref{rem:restrict-to-m0n} instead of the original definition in \cref{def:pt}. In that case, the equality in the previous corollary becomes
\[
\varphi_{n-1}^\ast(z_\sigma) \frac{p_1 - p_{\sigma_i}}{(p_{n} - p_{\sigma_i}) (p_{1} - p_n)} = \varphi_n^\ast(\shuffle_{\sigma_i} z_\sigma).
\]
\end{remark}

We are now ready to complete the proof of \cref{thm:log-canon-Parke--Taylor}.

\begin{proof}[Proof of \cref{thm:log-canon-Parke--Taylor}]
Let $\varphi_n$ and $\Phi_n$ be the parametrizations of $\pt_n$ and $\lc_n$ respectively. We will construct a linear map $L_n: \cc[t_{i_1 \ldots i_{n-3}}] \to \cc[z_\sigma ~:~ \sigma \in \Sigma_n ]$ such that $\varphi_n^\ast(L_n(t_{i_1 \ldots i_{n-3}})) = \Phi_n^\ast(t_{i_1 \ldots i_{n-3}})$
and thus $L_n$ maps $\pt_n^\circ$ to $\lc_n^\circ$. Since both the open Parke--Taylor variety and log canonical embedding are of course Zariski dense in their corresponding closures, this implies that $L_n$ maps $\pt_n$ to $\lc_n$ as required. 

Suppose by way of induction that we already have a linear isomorphism $L_{n-1}$ between $\lc_{n-1}$ and $\pt_{n-1}$ of the required form and define $L_n$ by
\[
L_n(t_{i_1 \ldots i_{n-3}}) = \sum_{\sigma \in B_{i_1 \ldots i_{n-4}}} \shuffle_{i_{n-3} + 1} z_\sigma 
\] 
where $B_{i_1 \ldots i_{n-1}}$ is defined by $L_{n-1}(t_{i_1 \ldots i_{n-4}}) = \sum_{\sigma \in B_{i_1 \ldots i_{n-4}}} z_\sigma$. Since $\Phi_n$, is the composition of the iterated Kapranov embedding and the Segre embedding, we have that
\begin{equation}
\label{eqn:recursive-log-canon}
\Phi_n^\ast(t_{i_1 \ldots i_{n-3}})= \Phi_{n-1}^\ast(t_{i_1 \ldots i_{n-4}}) \frac{p_1 - p_{i_n + 1}}{p_n - p_{i_n + 1}} = 
\Phi_{n-1}^\ast(t_{i_1 \ldots i_{n-4}}) \frac{p_1 - p_{i_n + 1}}{(p_n - p_{i_n + 1})(p_1 - p_n)}
\end{equation}
Where the last equality simply corresponds to the fact that the domain of the Segre embedding is the product of projective spaces $\pp^1 \times \pp^2 \times \cdots \times \pp^{n-3}$ and thus we may scale the last component by $\frac{1}{p_1 - p_n}$ without changing the corresponding point in projective space. By our induction hypothesis, we have that
\begin{equation*}
\Phi_{n-1}^\ast(t_{i_1 \ldots i_{n-4}}) = \varphi_{n-1}^\ast(L_{n-1}(t_{i_1 \ldots i_{n-4}})) = \varphi_{n-1}^\ast \left(\sum_{\sigma \in B_{i_1 \ldots i_{n-4}}} z_\sigma \right)
\end{equation*}
and combining this equation with \Cref{eqn:recursive-log-canon} yields
\[
\Phi_n^\ast(t_{i_1 \ldots i_{n-3}}) = \varphi_{n-1}^\ast \left(\sum_{\sigma \in B_{i_1 \ldots i_{n-4}}} z_\sigma \right) \frac{p_1 - p_{i_n + 1}}{(p_n - p_{i_n + 1})(p_1 - p_n)} = \varphi_n^\ast \left( \sum_{\sigma \in B_{i_1 \ldots i_{n-4}}} \shuffle_{i_{n-3}+ 1} z_\sigma \right),
\]
where the last equality follows immediately by applying \cref{cor:pt-recursive-shuffle}, or more specifically the alternative version of it stated in \cref{rem:alternate-pt-shuffle}. This completes the proof since the right hand side of the above equation is $\varphi_n^\ast(L_n(t_{i_1 \ldots i_{n-3}}))$ by definition.  
\end{proof}

The following example showcases the iterative construction of the map $L_n$. 

\begin{example}
\label{ex:iterative-LC-to-PT}
Let $n = 6$ and recall from \Cref{ex:LC-to-PT-5} that $L_5(t_{21}) = z_{12345} + z_{12354} + z_{12534}$. Then $i_{n-3} = 2$ and so $L_6(t_{212})$ is given by
\begin{align*}
L_6(t_{212}) &= \sum_{\sigma \in B_{21}} \shuffle_{3} z_\sigma =  \underline{\textcolor{red}{\shuffle_3 z_{12345}}} + \underline{\underline{\textcolor{blue}{\shuffle_3 z_{12354}}}}+ \textcolor{ForestGreen}{\shuffle_3 z_{12534}}\\
&= \underline{\textcolor{red}{z_{123456} + z_{123465} + z_{123645}}} + \underline{\underline{\textcolor{blue}{z_{123546} + z_{123564} + z_{123654}}}}  + \textcolor{ForestGreen}{z_{125346} + z_{125364}}, 
\end{align*}
where the colored (or underlined) terms in the second line are the corresponding expansion of those in the first. As explained in the proof of \cref{cor:pt-recursive-shuffle}, note that $\shuffle_{3} z_\sigma$ can also be thought of as a sum over all permutations $\tau$ such that $\tau$ is obtained from $\sigma$ by inserting $6$ in to $\sigma$ at any position after $3$. 
\end{example}

The following result clarifies the relationship between the support sets $B_{i_1 \ldots i_{n-3}}$, which encode the linear map $L_n$ between $\pt_n$  and $\lc_n$, and the the \emph{right weak order} on permutations.

\begin{proposition}
The set $B_{i_1, \cdots i_{n-3}}$ is a lower order ideal in the right weak order. Moreover, $B_{i_1, \ldots i_{n-3}} = \{\sigma \in \Sigma_n ~:~ \inv(\sigma) \subseteq S \cup T\}$ where
$S$ is defined recursively by $B_{i_1, \ldots i_{n-4}} = \{\sigma' \in \Sigma_{n-1} ~:~ \inv(\sigma') \subseteq S\}$ and $T = \{(i, n) ~:~ \text{there exists } \tau \in B_{i_1 \ldots i_{n-4}}, ~~\tau^{-1}(i) > \tau^{-1}(i_{n-3} + 1) \}$.
\end{proposition}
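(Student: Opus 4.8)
The plan is to prove the two assertions by induction on $n$, leveraging the recursive definition of $B_{i_1 \ldots i_{n-3}}$ via the shuffle operator that already appears in the construction of $L_n$. The base case is $n=5$, where one checks directly from \cref{ex:LC-to-PT-5} that each $B_{i_1 i_2}$ is exactly $\{\sigma \in \Sigma_5 : \inv(\sigma) \subseteq S\}$ for the indicated set $S$ of inversions, and that each such set is a lower order ideal in the right weak order (this is immediate since $\{\sigma : \inv(\sigma) \subseteq S\}$ is automatically downward closed under $\leq$: if $\tau \leq \sigma$ then $\inv(\tau) \subseteq \inv(\sigma) \subseteq S$). So the real content is the inductive step, and the crux is understanding how the shuffle operator transforms an inversion set.

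The key lemma I would isolate first is a purely combinatorial statement: for $\sigma \in \Sigma_{n-1}$ and a fixed index $k \in [n-3]$, the permutations $\tau$ appearing in $\shuffle_{k+1} z_\sigma$ are exactly those $\tau \in \Sigma_n$ with $\tau \setminus \{n\} = \sigma$ and $(k+1, n) \notin \inv(\tau)$; equivalently $\tau$ is obtained from $\sigma$ by inserting $n$ at some position weakly to the right of the value $k+1$. For such a $\tau$, one has $\inv(\tau) = \inv(\sigma) \cup \{(n, m) : m \in A_\tau\}$ where $A_\tau$ is the set of values of $\sigma$ lying strictly to the right of the insertion point — and the possible sets $A_\tau$, as $\tau$ ranges over the inserts allowed by the constraint, are precisely the sets $\{m : \sigma^{-1}(m) > \sigma^{-1}(k+1)\}$ and all of its ``suffixes'' obtained by moving the insertion point rightward. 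Then I would take the union over $\sigma \in B_{i_1 \ldots i_{n-4}}$: by the induction hypothesis $B_{i_1 \ldots i_{n-4}} = \{\sigma : \inv(\sigma) \subseteq S\}$, and a short argument shows that $\bigcup_{\sigma \in B_{i_1 \ldots i_{n-4}}} \{\tau : \tau \in \shuffle_{i_{n-3}+1} z_\sigma\}$ equals $\{\tau \in \Sigma_n : \inv(\tau) \subseteq S \cup T\}$ with $T$ as defined in the statement. One inclusion is direct: any $\tau$ in the union restricts to some $\sigma \in B_{i_1 \ldots i_{n-4}}$ (so the non-$n$ inversions land in $S$) and each inversion $(n, m)$ it contributes satisfies $m \in T$ by the definition of $T$ and the insertion constraint. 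For the reverse inclusion, given $\tau$ with $\inv(\tau) \subseteq S \cup T$, its restriction $\sigma = \tau \setminus \{n\}$ satisfies $\inv(\sigma) \subseteq S$ so lies in $B_{i_1 \ldots i_{n-4}}$, and one must check the insertion point of $n$ in $\tau$ is weakly right of $i_{n-3}+1$ — this is where the precise form of $T$ (quantified over witnesses $\tau'$ with $\tau'^{-1}(i) > \tau'^{-1}(i_{n-3}+1)$) is needed to rule out an inversion $(n,m)$ with $m$ positioned left of $i_{n-3}+1$ in $\sigma$.

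Once the explicit description $B_{i_1 \ldots i_{n-3}} = \{\sigma : \inv(\sigma) \subseteq S \cup T\}$ is established, the lower-order-ideal claim is again free, by the same remark used in the base case: sets of the form $\{\sigma : \inv(\sigma) \subseteq U\}$ are always lower order ideals in the right weak order. So the statement reduces entirely to the combinatorial bookkeeping of the inductive step.

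I expect the main obstacle to be the reverse inclusion in the inductive step — specifically, verifying that the somewhat intricate definition of $T$ (with its existential quantifier over a witness permutation $\tau \in B_{i_1 \ldots i_{n-4}}$ with $\tau^{-1}(i) > \tau^{-1}(i_{n-3}+1)$) is exactly what is needed so that $\inv(\tau) \subseteq S \cup T$ forces the $n$-insertion to respect the $\shuffle_{i_{n-3}+1}$ constraint for \emph{some} single $\sigma \in B_{i_1 \ldots i_{n-4}}$, rather than merely for a patchwork of different $\sigma$'s. The care here is that the constraint ``$n$ is inserted weakly right of the value $i_{n-3}+1$'' refers to the position of $i_{n-3}+1$ in $\sigma$, which varies with $\sigma$; one needs that whenever all the inversions $(n,m)$ of $\tau$ have $m \in T$, there is a common $\sigma \in B_{i_1 \ldots i_{n-4}}$ in which every such $m$ sits to the right of $i_{n-3}+1$, and this should follow from the fact that $B_{i_1 \ldots i_{n-4}}$, being a lower order ideal, is closed under the order operations needed to find a maximal such witness.
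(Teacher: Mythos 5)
Your overall strategy is sound and, for the first claim, actually cleaner than the paper's: the paper proves that $B_{i_1\ldots i_{n-3}}$ is a lower order ideal by a separate induction (deleting $n$, applying the inductive hypothesis to $\tau' \leq \sigma'$, and reinserting), whereas you correctly observe that once the explicit description $B_{i_1\ldots i_{n-3}} = \{\sigma : \inv(\sigma) \subseteq S \cup T\}$ is in hand, downward closure in the right weak order is automatic since $\tau \leq \sigma$ means $\inv(\tau) \subseteq \inv(\sigma)$. Your treatment of the forward inclusion and the bookkeeping of how $\shuffle_{i_{n-3}+1}$ modifies inversion sets also matches what is needed.

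However, the step you flag as the ``main obstacle'' — finding a common witness $\sigma \in B_{i_1\ldots i_{n-4}}$ in which every $m$ with $(m,n) \in \inv(\tau)$ lies to the right of $i_{n-3}+1$, possibly via lattice operations in the weak order — is a phantom difficulty, and the resolution you gesture at is not the right one. By the recursive definition coming from \cref{thm:log-canon-Parke--Taylor} and \cref{defn:shuffle-sum}, membership of $\tau$ in $B_{i_1\ldots i_{n-3}}$ requires exactly two things: (a) $\tau \setminus \{n\} \in B_{i_1\ldots i_{n-4}}$, and (b) $(i_{n-3}+1, n) \notin \inv(\tau)$. Given $\inv(\tau) \subseteq S \cup T$, condition (a) follows because $\inv(\tau\setminus\{n\}) = \inv(\tau) \setminus T \subseteq S$, and condition (b) follows in one line because $(i_{n-3}+1, n)$ lies in neither $S$ (which involves no pairs containing $n$) nor $T$ (no $\tau'$ satisfies $\tau'^{-1}(i_{n-3}+1) > \tau'^{-1}(i_{n-3}+1)$). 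No per-inversion witness analysis is needed: once (b) holds, $i_{n-3}+1$ precedes $n$ in $\tau$, so every $m$ inverted with $n$ automatically sits to the right of $i_{n-3}+1$ in $\tau\setminus\{n\}$, and the only relevant ``witness'' is $\tau\setminus\{n\}$ itself. This one-line observation is precisely how the paper closes the reverse inclusion; with it inserted in place of your final paragraph, your proof is complete.
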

\begin{proof}
We first show that $B_{i_1, \cdots i_{n-3}}$ is indeed a lower order ideal in the weak order by induction on $n$. Observe that for $n = 5$ this is indeed the case and assume the result holds for $B_{i_1 \ldots i_{n-4}}$. 
Now suppose that $\sigma \in B_{i_1 \ldots i_{n-3}}$ and $\tau \leq \sigma$ in the right weak order on $\Sigma_{n}$. So we must show that $\tau \in B_{i_1 \ldots i_{n-3}}$. Let $\tau' = \tau \setminus \{n\} \in \Sigma_{n-1}$ be the permutation obtained from $\tau$ by deleting $n$ and define $\sigma'$ analogously. Then by definition of the weak order, it must be that $\tau' \leq \sigma'$ and since $\sigma \in B_{i_1 \ldots i_{n-3}}$, it must be that $\sigma' \in B_{i_1 \ldots i_{n-4}}$. 
By the inductive hypothesis, $\tau' \in B_{i_1 \ldots i_{n-4}}$ and thus we see that $\tau \in B_{i_1 \ldots i_{n-3}}$ which shows that $B_{i_1 \ldots i_{n-3}}$ is indeed a lower order ideal. 

To prove the second statement, we also proceed by induction with $n = 5$ serving as the base case. Let $S$ and $T$ be defined as above and let $C_{i_1 \ldots i_{n-3}} = \{\sigma \in \Sigma_n ~:~ \inv(\sigma) \subseteq S \cup T\}$. Clearly it holds that $B_{i_1 \ldots i_{n-3}} \subseteq C_{i_1 \ldots i_{n-3}}$ so it remains to show the reverse inclusion. So suppose that $\sigma \in C_{i_1 \ldots i_{n-3}}$ and observe that $\sigma' = \sigma \setminus \{n\} \in B_{i_1 \ldots i_{n-4}}$ since $\inv(\sigma') = \inv(\sigma) \setminus T \subseteq S$. But this once again immediately implies that $\sigma \in B_{i_1 \ldots i_{n-3}}$ since $(i_{n-3}+1, n) \notin \inv(\sigma)$.  
\end{proof}

The fact that $B_{i_1 \ldots i_{n-3}}$ can be described as a set of permutations whose inversions sets are all contained in a given set of inversions is actually quite special and does not hold for every lower order ideal of the weak order. For example if one considers the lower order ideal in the weak order on $S_3$ generated by the permutations $231$ and $312$ as pictured in \Cref{fig:weak-order}, then $\inv(321) = \inv(231) \cup \inv(312) = \{(1,2), (1,3), (2,3)\}$ however $321 \notin \langle 231, 312 \rangle$.

We now use the linear isomorphism between $\mathrm{PT}_n$ and $\mathrm{LC}_n$ to derive some properties of $\pt_n$ and its defining ideal. We note that the multidegree of the embedding $\Omega_n$ of $\monbar$ into $\mathbb{P}^1\times\ldots\times \mathbb{P}^{n-3}$ was computed in \cite{cavalieri2021projective}. The multidegree of a smooth subvariety of a product of projective spaces can be easily transformed into the degree of its image under the Segre embedding, leading to the following result. We write $\binom{n}{i_1\ldots i_{r}}$ for the usual multinomial coefficient and $\abinom{n}{i_1\ldots i_{r}}$ for the asymmetric multinomial coefficient (see \cite[Definition 4.11]{cavalieri2021projective}).

\begin{corollary} \label{cor:degree}
The degree of $\lc_n$, and thus $\pt_n$, is
$$\sum\limits_{i_1+\ldots+i_{n-3}=n-3} \binom{n-3}{i_1\ldots i_{n-3}}\abinom{n-3}{i_1\ldots i_{n-3}}.$$
\end{corollary}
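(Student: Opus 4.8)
The plan is to combine two ingredients: the multidegree of the iterated Kapranov embedding $\Omega_n(\monbar)$ inside $\pp^1 \times \cdots \times \pp^{n-3}$, which is computed in \cite{cavalieri2021projective}, and the standard expression of the degree of a Segre image in terms of that multidegree. The statement for $\pt_n$ will then follow from \cref{thm:log-canon-Parke--Taylor}, since a linear automorphism of $\pp^{(n-2)!-1}$ preserves degree.

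First I would recall what \cite{cavalieri2021projective} provides. Write $H_i$ for the pullback to $\monbar$, along the closed embedding $\Omega_n$, of the hyperplane class of the $i$-th factor $\pp^i$, for $i = 1, \ldots, n-3$. Since $\Omega_n$ is a closed embedding by \cite[Corollary 2.7]{keel2009equations} and $\monbar$ is smooth and irreducible of dimension $n-3$, the multidegree of $\Omega_n(\monbar)$ is encoded by the intersection numbers $\int_{\monbar} H_1^{i_1} \cdots H_{n-3}^{i_{n-3}}$ taken over all tuples $(i_1, \ldots, i_{n-3})$ of nonnegative integers with $i_1 + \cdots + i_{n-3} = n-3$ (such a number vanishes unless $i_j \le j$ for all $j$, so the constraints $i_j \le j$ need not be imposed by hand). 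By \cite{cavalieri2021projective} this intersection number equals the asymmetric multinomial coefficient $\abinom{n-3}{i_1 \ldots i_{n-3}}$ of \cite[Definition 4.11]{cavalieri2021projective}.

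Next I would apply the Segre degree formula. The Segre embedding $\xi \colon \pp^1 \times \cdots \times \pp^{n-3} \hookrightarrow \pp^{(n-2)!-1}$ satisfies $\xi^\ast \mathcal{O}(1) = \mathcal{O}(1,\ldots,1)$, so the hyperplane class of $\pp^{(n-2)!-1}$ restricts on $\monbar$ to $H_1 + \cdots + H_{n-3}$. Therefore
$$\deg \lc_n = \int_{\monbar} (H_1 + \cdots + H_{n-3})^{n-3} = \sum_{i_1 + \cdots + i_{n-3} = n-3} \binom{n-3}{i_1 \ldots i_{n-3}} \int_{\monbar} H_1^{i_1} \cdots H_{n-3}^{i_{n-3}}$$
by the multinomial theorem. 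Substituting the values of the intersection numbers recalled above yields the claimed formula for $\deg \lc_n$. Finally, $\deg \pt_n = \deg \lc_n$ because the two varieties are related by a linear isomorphism of $\pp^{(n-2)!-1}$ (\cref{thm:log-canon-Parke--Taylor}), and linear isomorphisms preserve degree.

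The only genuine care required is in the bookkeeping step of matching conventions: one must check that the ordering of the factors $\pp^1 \times \pp^2 \times \cdots \times \pp^{n-3}$ in the iterated Kapranov embedding matches the indexing used for the asymmetric multinomial coefficient in \cite[Definition 4.11]{cavalieri2021projective}, and that the normalization of the projective degrees there agrees with intersection against the classes $H_i$. As a sanity check one can confirm the formula against the known value $\deg \lc_5 = \deg \pt_5 = 5$, consistent with $\monbar[5]$ being the quintic del Pezzo surface and with $\overline{\mathcal M}_{0,5}\subseteq\pp^1\times\pp^2$ being a hypersurface of bidegree $(1,2)$. Apart from this, every step is a routine application of intersection theory on the smooth projective variety $\monbar$, so I do not expect a substantive obstacle.
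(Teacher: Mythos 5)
Your proposal is correct and follows essentially the same route as the paper's proof: it pulls the multidegree of the iterated Kapranov embedding from \cite[Theorem 1.1]{cavalieri2021projective}, identifies the Segre hyperplane class with $h_1+\cdots+h_{n-3}$, expands by the multinomial theorem, and transfers the degree to $\pt_n$ via the linear isomorphism of \cref{thm:log-canon-Parke--Taylor}. No substantive differences.
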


\begin{proof}
    The coefficient $d_{i_1,\ldots,i_{n-3}}$ of the multidegree polynomial of $\Omega_n(\monbar)$ is the asymmetric multinomial coefficient $\abinom{n-3}{i_1\ldots i_{n-3}}$ by \cite[Theorem 1.1]{cavalieri2021projective}. Let $\xi$ denote the Segre embedding $\mathbb{P}^1\times\ldots\times \mathbb{P}^{n-3}\hookrightarrow \mathbb{P}^{(n-2)!-1}$.  Write $h_i=c_1(\mathcal{O}_{\mathbb{P}^i}(1))$, where 
    $c_1$ is the first Chern class. Then we have $c_1(\xi^*\mathcal{O}_{\mathbb{P}^{(n-2)!-1}}(1))=h_1+\ldots+h_{n-3}$ (see e.g. \cite[Equation (3.5)]{kozhasov2023minimal}). The degree of $\mathrm{LC}_n$ is given by 
    \begin{multline*}
        \deg \mathrm{LC}_n = \int\limits_{\mathbb{P}^{(n-2)!-1}} c_1(\xi^*\mathcal{O}_{\mathbb{P}^{(n-2)!-1}}(1))^{n-3}\cap[\mathrm{LC}_n] = \int\limits_{\mathbb{P}^{(n-2)!-1}}(h_1+\ldots+h_{n-3})^{n-3}\cap [\mathrm{LC}_n] =\\ =\sum\limits_{i_1+\ldots+i_{n-3}=n-3} \binom{n-3}{i_1\ldots i_{n-3}}d_{i_1\ldots i_{n-3}}=\sum\limits_{i_1+\ldots+i_{n-3}=n-3} \binom{n-3}{i_1\ldots i_{n-3}}\abinom{n-3}{i_1\ldots i_{n-3}}.\end{multline*}
Since $\mathrm{PT}_n$ is linearly isomorphic to $\mathrm{LC}_n$ by \cref{thm:log-canon-Parke--Taylor}, its degree is the same. 
\end{proof}

\begin{example}
For $n=5$ the only nonzero asymmetric multinomial coefficients are $\abinom{2}{20} = 1$ and $\abinom{2}{1 1}=2$. Plugging these into the formula from \cref{cor:degree} yields $\deg \pt_5 = 5$. For $n=6$ we have $\abinom{3}{111}=6$, $\abinom{3}{120}=3$, $\abinom{3}{3 0 0 } = 1$, $\abinom{3}{2 0 1}=2$ and $\abinom{3}{210}=3$. This then yields $\deg\pt_6 = 6\cdot 6+ 3\cdot 3+ 1\cdot 1+ 2\cdot3 + 3\cdot 3 = 61$. We also verified these numbers by a computation in \texttt{Macaulay2}. 
\end{example}

\begin{proposition}\cite[Theorem 8.6]{keel2009equations}
The ideal $\ci(\lc_n)$ is quadratically generated. 
\end{proposition}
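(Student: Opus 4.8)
This is \cite[Theorem~8.6]{keel2009equations}, so the plan is essentially to invoke that result; for the reader's benefit I describe the route I would take to prove it. Following Keel and Tevelev, the idea is to show that the embedding $\lc_n \hookrightarrow \pp^{(n-2)!-1}$ of $\monbar$ by the complete log canonical linear system is $2$-regular in the sense of Castelnuovo--Mumford, which formally implies that $\ci(\lc_n)$ is generated in degrees $\le 2$ (one can in fact push this to Koszulness of the homogeneous coordinate ring). To obtain the regularity bound I would induct on $n$ along the forgetful morphism $\pi_n : \monbar \to \overline{\cm_{0,n-1}}$, which realizes $\monbar$ as the universal curve over $\overline{\cm_{0,n-1}}$: the log canonical divisor on the total space is a pullback from the base plus a $\pi_n$-relatively ample correction concentrated on the boundary, so one can propagate the cohomological vanishings underlying $2$-regularity from the base (by induction) and from the fibers (nodal genus-zero curves, which are $2$-regular for the relevant polarization) to the total space via the Leray spectral sequence and the projection formula.

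One might hope to bypass this and argue in the present paper's coordinates. By \cite{monin2017equations, gillespie2022proof} the ideal of $\Omega_n(\monbar)$ in the Cox ring of $\pp^1 \times \cdots \times \pp^{n-3}$ has explicit multihomogeneous generators, and one can translate them through the Segre embedding: modulo the $2 \times 2$ minors of the flattenings of the tensor $(t_{i_1 \ldots i_{n-3}})$, a monomial of multidegree $(d_1, \ldots, d_{n-3})$ in the factor coordinates equals a product of $\max_k d_k$ of the variables $t_{i_1 \ldots i_{n-3}}$ in many equivalent ways, and carrying this out (the recipe of \cite[Section~1]{monin2017equations}) expresses $\ci(\lc_n)$ in terms of the Segre quadrics together with cubics. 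But to then see that $\ci(\lc_n)$ is generated by its degree-$2$ part one still has to show that each such cubic lies in the ideal spanned by the quadrics, and I do not see how to do this without the regularity, equivalently arithmetic Cohen--Macaulayness, input of \cite{keel2009equations}.

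So in either approach the real obstacle is identical: upgrading a set-theoretic cut-out of $\lc_n$ by quadrics to an ideal-theoretic one, which is precisely the content of \cite[Theorem~8.6]{keel2009equations}. Since in this paper the statement is needed only to be transported, via the linear isomorphism of \cref{thm:log-canon-Parke--Taylor}, into the conclusion that $\ci(\pt_n)$ is likewise quadratically generated, we simply cite it.
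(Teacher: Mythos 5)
The paper offers no proof of this proposition; it is imported verbatim as \cite[Theorem~8.6]{keel2009equations}, which is exactly what you ultimately do, so your approach matches the paper's. Your additional sketch of the regularity argument is a reasonable gloss on Keel--Tevelev's method but is not needed here, since the statement is only cited in order to transport quadratic generation to $\ci(\pt_n)$ via the linear isomorphism of \cref{thm:log-canon-Parke--Taylor}.
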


\begin{corollary}
The ideal $\ci(\pt_n)$ is quadratically generated. 
\end{corollary}

This corollary follows immediately from the fact that $\ci(\lc_n)$ is quadratically generated and the fact that $\pt_n$ is linearly isomorphic to $\lc_n$. 
While both $\lc_n$ and $\pt_n$ are quadratically generated embeddings of $\monbar$, we will see in the following sections that the ways in which the quadratic generators of their respective ideals arise are quite different from an algebraic perspective. In particular we will show that our parametrization $\varphi_n$ of $\pt_n$ can also be seen as the composition of a monomial map $\tilde{\varphi}_n$ which we study in \cref{sec:toric} and the Pl\"ucker embedding whereas the log canonical embedding $\Phi_n$ is the composition of the Segre embedding and the iterated Kapranov embedding $\Omega_n$. From this perspective one can see that
\[
\ci(\pt_n) = (\tilde{\varphi_n}^\ast)^{-1}(\ci(\gr(2,n))), ~~ \ci(\lc_n) = (\psi^\ast)^{-1}(\ker(\Omega_n^\ast)).
\]
In the case of $\pt_n$, the monomial map $\tilde{\varphi}_n$, and its associated toric ideal are quite complicated as we will see in the next section, but the ideal $\ci(\gr(2,n))$, which we must compute the preimage of, is quite simple and it is well known that it is generated by the Pl\"ucker relations. On the other hand, for $\lc_n$, the Segre embedding is an extremely simple monomial map and its corresponding toric ideal is generated by quadratic binomials, however $\ker(\Omega_n^\ast)$ is more complicated and is generated by the cubics known as the \emph{Monin--Rana} equations \cite{gillespie2022proof, monin2017equations}.

\section{Binomial Parke--Taylor identities} \label{sec:toric}
In this section we describe a variety $T_n$ that contains $\mathrm{PT}_n$ and is cut out by binomial equations. Note that due to the containment, every equation of $T_n$ vanishes on $\mathrm{PT}_n$, that is we have the corresponding reverse containment of ideals $\ci(T_n) \subseteq \ci(\pt_n)$. Therefore, this is the first step to describing the ideal of $\mathrm{PT}_n$.

We first describe $T_n$ geometrically. The Grassmannian $\mathrm{Gr}(2,n)$ is embedded into the projective space $\mathbb{P}^{\binom{n}{2}-1}$ with coordinates $p_{ij}$. We can therefore extend the map $\varphi_n: \mathrm{Gr}(2,n)\dashrightarrow \mathbb{P}^{(n-2)!-1}$ from Definition \ref{def:pt} to a rational map $\tilde{\varphi}_n:\mathbb{P}^{\binom{n}{2}-1}\dashrightarrow \mathbb{P}^{(n-2)!-1}$ on the whole projective space given by the same formula. We define $T_n$ to be the Zariski closure of the image of $\tilde{\varphi}_n$. Observe that this implies that $\mathrm{PT}_n$ is a subvariety of $T_n$ since $\varphi_n$ is simply the restriction of $\tilde{\varphi}_n$ to the Grassmanian.  

The map $\tilde{\varphi}_n$ yields a very special parametrization of $T_n$: it is given by monomials. Varieties parametrized by monomials are called \emph{toric}. This very special class of varieties is extremely well-studied in the literature, see \cite{cox2011toric} for a comprehensive reference. As explained in \cite[Section 3.3]{Lam2022PosGeom}, toric varieties also naturally give a class of positive geometries. We begin our investigation of this toric variety with the following result concerning its dimension. In what follows, we use the term \emph{affine cone}, which is the affine variety in $\mathbb{C}^{(n-2)!}$ cut out by the same equations as the projective variety $T_n\subseteq \mathbb{P}^{(n-2)!-1}$. The generators of the ideal of a projective variety and of its affine cone are the same.

\begin{proposition} \label{prop:toricdim}
    The affine cone over the toric variety $T_n$ is an irreducible variety of affine dimension $\binom{n-1}{2}-1$.
\end{proposition}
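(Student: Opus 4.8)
The plan is to reduce the statement to a rank computation for an incidence matrix of Hamiltonian cycles, and then to carry out that computation by describing the orthogonal complement of the matrix's column space explicitly. I will freely use that $\tilde\varphi_n$ is given by monomials (as observed just above the statement).

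\emph{Step 1: reduction to a rank.} For $\sigma\in\Sigma_n$ let $C_\sigma$ be the Hamiltonian cycle $(1,2,\sigma_3,\dots,\sigma_n)$ of the complete graph $K_n$ and let $\chi_{C_\sigma}\in\{0,1\}^{E(K_n)}$ be its edge indicator vector. Clearing denominators, $\tilde\varphi_n$ sends $[p]$ to $\bigl[\prod_{e\notin C_\sigma}p_e:\sigma\in\Sigma_n\bigr]$, so, since the affine cone over the closure of a monomial image is the closure of the cone over it, $\widehat{T_n}$ is the Zariski closure of the image of the monomial map
\[
\Psi\colon(\mathbb{C}^*)^{E(K_n)}\times\mathbb{C}^*\longrightarrow\mathbb{C}^{(n-2)!},\qquad \Psi(\mu,t)_\sigma=t\prod_{e\notin C_\sigma}\mu_e .
\]
In particular $\widehat{T_n}$ is irreducible (it is the closure of the image of a torus), and its dimension equals the rank of the exponent matrix $E$ of $\Psi$, the $\bigl(\binom n2+1\bigr)\times(n-2)!$ matrix whose $\sigma$-th column is $\bigl(\begin{smallmatrix}1\\ \mathbf 1-\chi_{C_\sigma}\end{smallmatrix}\bigr)$. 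Let $A$ be the $\binom n2\times(n-2)!$ edge--cycle incidence matrix with columns $\chi_{C_\sigma}$. The rows of $E$ are the all-ones row together with the rows $\mathbf 1-(\text{row of }A)$; since every $C_\sigma$ contains the edge $\{1,2\}$, the $\{1,2\}$-row of $A$ is the all-ones vector, so the row space of $E$ coincides with that of $A$. Hence $\dim\widehat{T_n}=\rank A=\binom n2-\dim W$, where
\[
W=\Bigl\{\,w\in\mathbb{C}^{E(K_n)}\ :\ \textstyle\sum_{e\in C_\sigma}w_e=0\ \text{ for all }\sigma\in\Sigma_n\,\Bigr\}
\]
is the left kernel of $A$.

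\emph{Step 2: computing $\dim W$.} Put $V'=\{3,\dots,n\}$. A vector $w$ lies in $W$ iff the path sum $S(\sigma)=w_{2\sigma_3}+w_{\sigma_3\sigma_4}+\dots+w_{\sigma_{n-1}\sigma_n}+w_{\sigma_n1}$ equals $-w_{12}$ for every ordering $(\sigma_3,\dots,\sigma_n)$ of $V'$. The symmetric group on $V'$ is generated by adjacent transpositions, and swapping two consecutive entries $\sigma_k,\sigma_{k+1}$ changes $S(\sigma)$ by $w_{x\sigma_k}+w_{\sigma_{k+1}y}-w_{x\sigma_{k+1}}-w_{\sigma_ky}$, where $x,y$ are the neighbours of the swapped block (with $x=2$, resp.\ $y=1$, at the ends). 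Thus $w\in W$ exactly when all these differences vanish and $w_{12}=-S(\sigma_0)$ for some (equivalently any) $\sigma_0$. The vanishing conditions are Monge-type identities $w_{ab}+w_{cd}=w_{ad}+w_{cb}$ for suitable $4$-tuples with entries in $V'\cup\{1,2\}$, and for $n\ge5$ a standard argument (enough distinct indices exist) shows they force
\[
w_{ij}=c_i+c_j\ \ (i,j\in V'),\qquad w_{2i}=c_i+\alpha,\qquad w_{1i}=c_i+\beta\quad(i\in V'),
\]
for scalars $(c_i)_{i\in V'},\alpha,\beta$; conversely every $w$ of this shape satisfies the conditions, and for it $S(\sigma)=\alpha+\beta+2\sum_{i\in V'}c_i$ is independent of $\sigma$ by telescoping, so the remaining requirement just fixes $w_{12}=-\alpha-\beta-2\sum_{i\in V'}c_i$. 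The map $w\mapsto\bigl((c_i)_{i\in V'},\alpha,\beta\bigr)$ is then a linear isomorphism $W\xrightarrow{\ \sim\ }\mathbb{C}^{n}$ (recover $c_i=\tfrac12(w_{ij}+w_{ik}-w_{jk})$, then $\alpha=w_{2i}-c_i$, $\beta=w_{1i}-c_i$), so $\dim W=n$. The case $n=4$ is a direct check (two linear conditions on $\mathbb{C}^6$, again giving $\dim W=4$), and $n=3$ is degenerate.

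Combining the two steps, $\rank A=\binom n2-n=\binom{n-1}2-1$, hence $\dim\widehat{T_n}=\binom{n-1}2-1$, and $\widehat{T_n}$ is irreducible. The main obstacle is Step 2: one must make precise the passage from the Monge identities to the form $w_{ij}=c_i+c_j$, in particular handling the identities that involve the distinguished vertices $1$ and $2$ and checking that the general argument still works for the smallest admissible $n$, where some of the $4$-tuples used in the generic argument are unavailable.
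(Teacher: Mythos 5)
Your route is genuinely different from the paper's: the paper restricts $\tilde\varphi_n$ to an explicit affine slice $W\subset\mathbb{P}^{\binom{n}{2}-1}$ (cut out by $p_{12}=1$, $p_{in}=1$) of dimension $\binom{n-1}{2}-1$ and argues the restricted map is dominant and generically invertible by recovering the $p_{ij}$ from cross-ratios, whereas you reduce everything to the rank of the edge--cycle incidence matrix $A$ and compute its left kernel. Your Step 1 is fine: the irreducibility, the reduction of $\dim\widehat{T_n}$ to $\operatorname{rank}A$, and the observation that the $\{1,2\}$-row of $A$ is all ones are all correct, and the verification that the $n$-parameter family $w_{ij}=c_i+c_j$, $w_{2i}=c_i+\alpha$, $w_{1i}=c_i+\beta$, $w_{12}=-\alpha-\beta-2\sum c_i$ lies in the left kernel is also correct.

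The genuine gap is the converse inclusion in Step 2, i.e. the bound $\dim W\le n$. Note this is not a cosmetic detail: the inclusion you did prove only gives $\operatorname{rank}A\le\binom{n}{2}-n$, i.e. the \emph{upper} bound $\dim\widehat{T_n}\le\binom{n-1}{2}-1$; the lower bound, which is the substance of the proposition, is exactly the unproven converse. Moreover, the justification you offer for it, ``a standard argument (enough distinct indices exist),'' is the wrong mechanism in precisely the delicate cases you flag: for $n=5$ there are no exchange identities $w_{ab}+w_{cd}=w_{ad}+w_{cb}$ with all four indices in $V'=\{3,\dots,n\}$, and for $n=6$ there is essentially no freedom in choosing them, so an argument relying on interior $4$-tuples does not cover the smallest $n$. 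The gap is fillable, but by the \emph{endpoint} identities rather than the interior ones: the swaps adjacent to $2$ and to $1$ give, for all distinct $a,b,c\in V'$, the relations $w_{2a}+w_{bc}=w_{2b}+w_{ac}$ and $w_{1a}+w_{bc}=w_{1b}+w_{ac}$, i.e. $w_{ac}-w_{bc}=w_{2a}-w_{2b}=w_{1a}-w_{1b}$ for every $c\notin\{a,b\}$; setting $c_a:=w_{2a}-\alpha$ this forces $w_{ac}-c_a$ to depend only on $c$, and symmetry of $w$ then pins it to $c_c$ up to a constant absorbable into the $c_i,\alpha,\beta$, which is valid already for $n\ge5$ since it only needs $|V'|\ge3$. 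As written, though, the decisive computation is asserted rather than proved, so the proposal does not yet constitute a complete proof.
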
 
\begin{proof}
Note that the map $\tilde{\varphi}_n$ also parametrizes the affine cone over $T_n$ when considered as a map from $\cc^{\binom{n}{2}} \to \cc^{(n-2)!}$. 
Thus our strategy is to indicate an affine space of dimension $\binom{n-1}{2}-1$ in $\mathbb{P}^{\binom{n}{2}-1}$, which yields a subspace of $\cc^{\binom{n}{2}}$, on which $\tilde{\varphi}_n$ is dominant and invertible. 
    
    Consider the affine space $W\subset\mathbb{P}^{\binom{n}{2}-1}$ defined by the $n$ independent equations $p_{1,2}=1$ and $p_{i,n}=1$ for $i\in[n-1]$. The coordinates on $W$ are then $p_{ij}$ with $1\leq i<j<n$ and $(i,j)\neq(1,2)$ . As shown in the proof of Theorem \ref{thm:PTM0n}, any cross-ratio $[ij|kl] = \dfrac{p_{ik}p_{jl}}{p_{jk}p_{il}}$ is a ratio of two Parke--Taylor functions. On $W$ we have $[1n|2j]=\dfrac{1}{p_{1j}}$ for $2<j<n$ and $\dfrac{[1n|2j]}{[1i|jn]}=p_{ij}$ for $1<i<j<n$. Thus, every $p_{ij}$ is a ratio of polynomial combinations of Parke--Taylor functions, and $\tilde{\varphi}_n|_W$ is generically invertible. Since $\dim W=\binom{n-1}{2}-1$, the proposition follows.
\end{proof}

One particularly nice feature of toric varieties is that much of their structure can be understood through the lens of polyhedral combinatorics and linear algebra, without diving into the abstract machinery of algebraic geometry. More specifically, the monomial map which parameterizes the toric variety can be encoded by an integer matrix.

\begin{definition}
Let $A \in \zz^{d \times r}$ be an integer matrix and let $a_j$ denote the $j$-th column of $A$. Then the \emph{affine toric variety} $V_A$ associated to $A$ the matrix is the Zariski closure of the image of the map
\begin{align*}
\varphi_A : \cc^d &\to \cc^r, \\
             t  &\mapsto (t^{a_1}, t^{a_2}, \ldots, t^{a_r}),
\end{align*}
where $t^{a_j} = \prod_{i = 1}^d t_i^{a_{ij}}$. That is, $V_A = \overline{\mathrm{image}(\varphi_A)}$. The \emph{toric ideal} associated to $A$ is denoted $I_A = \mathcal{I}(V_A)$. 
\end{definition}

Observe that every toric variety can be written in the above form by taking the $j$-th column of $A$ to be the exponent vector of the $j$-th monomial. In the case of the toric Parke--Taylor  variety $T_n$, the corresponding matrix is $A_n \in \zz^{\binom{n}{2} \times (n-2)!}$ whose rows are naturally indexed by subsets $\{i, j\}\subset[n]$ and columns are indexed by permutations $\sigma \in \Sigma_n$ with entries given by
\[
(A_n)_{\{i, j\}, \sigma} =
\begin{cases}
1, \mbox{ if } i \mbox{ and } j \mbox{ are adjacent in } \sigma \\
0, \mbox{ otherwise}
\end{cases}
\]
It is then immediate from the definition that $\tilde{\varphi}_n = \varphi_{A_n}$. This perspective on toric varieties is exceedingly useful for many reasons. First, it can be shown that the ideal of defining equations of $T_n$, i.e. the \emph{toric ideal} $\ci(T_n)$, is a prime ideal generated by binomials \cite[Chapter 4]{sturmfels1996groebner} and that the dimension of $T_n$ is equal to the rank of $A_n$. Moreover, the elements in the integer kernel of the matrix $A_n$ immediately yield many binomial relations in the ideal. 

\begin{proposition}\cite[Chapter 4]{sturmfels1996groebner}
Let $A \in \zz^{d \times r}$ be an integer matrix and $I_A$ be the associated toric ideal. Then
\[
I_A = \langle z^u - z^v ~:~ u, v \in \zz_{\geq 0}^r \mbox{ and } Au = Av \rangle.
\]
\end{proposition}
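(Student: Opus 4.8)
The plan is to prove the stated equality of ideals by establishing the two inclusions separately, working throughout with the identification $I_A = \ker(\varphi_A^\ast)$, where $\varphi_A^\ast \colon \cc[z_1,\ldots,z_r] \to \cc[t_1^{\pm 1},\ldots,t_d^{\pm 1}]$ is the ring homomorphism determined by $z_j \mapsto t^{a_j}$. (The target is the honest polynomial ring $\cc[t_1,\ldots,t_d]$ when $A$ has nonnegative entries, as is the case for $A_n$, and the Laurent polynomial ring otherwise, since then $\varphi_A$ is only defined on the torus $(\cc^\ast)^d$.) This identification is exactly the fact $\ker(\varphi^\ast) = \ci(\image(\varphi))$ recalled in \cref{sec:prelim}, and it applies here because the source of $\varphi_A$ is irreducible.

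For the inclusion $\langle z^u - z^v : Au = Av\rangle \subseteq I_A$ I would simply evaluate $\varphi_A^\ast$ on a generator: for $u,v \in \zz_{\geq 0}^r$ we have $\varphi_A^\ast(z^u) = \prod_{j=1}^r (t^{a_j})^{u_j} = t^{\sum_j u_j a_j} = t^{Au}$, and likewise $\varphi_A^\ast(z^v) = t^{Av}$, so $Au = Av$ forces $z^u - z^v \in \ker(\varphi_A^\ast) = I_A$; since $I_A$ is an ideal, the whole ideal generated by such binomials lies in $I_A$.

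The substantive direction is $I_A \subseteq J$, where $J := \langle z^u - z^v : u,v \in \zz_{\geq 0}^r,\ Au = Av\rangle$, and here I would use a grouping-and-telescoping argument. Take $f \in I_A$ and write $f = \sum_i c_i z^{u_i}$ with the exponent vectors $u_i$ distinct and all $c_i \in \cc^\ast$. Partition the indices by the value $w := Au_i \in \zz^d$, writing $G_w = \{i : Au_i = w\}$. Applying $\varphi_A^\ast$ gives $0 = \varphi_A^\ast(f) = \sum_w \bigl(\sum_{i \in G_w} c_i\bigr)\, t^w$, and since distinct (Laurent) monomials $t^w$ are linearly independent over $\cc$, each coefficient sum $\sum_{i \in G_w} c_i$ vanishes. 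Fixing a representative $i_0(w) \in G_w$ for each nonempty group and using this vanishing, one rewrites $\sum_{i \in G_w} c_i z^{u_i} = \sum_{i \in G_w} c_i \bigl(z^{u_i} - z^{u_{i_0(w)}}\bigr)$; every binomial occurring here has $A u_i = A u_{i_0(w)} = w$ and hence lies in $J$, so summing over $w$ shows $f \in J$.

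I do not expect a genuine obstacle: the argument is elementary linear algebra over a polynomial ring. The only points meriting care are (i) verifying that $I_A$ equals, not merely contains, $\ker(\varphi_A^\ast)$, which rests on the irreducibility of the parametrizing space and is already recorded in \cref{sec:prelim}; and (ii) keeping the bookkeeping clean when $A$ has negative columns, so that one must pass to Laurent monomials — but since $\{t^w : w \in \zz^d\}$ is still linearly independent over $\cc$, the argument is verbatim the same. For the application to $T_n$ the matrix $A_n$ is a $0/1$ matrix, so point (ii) does not even arise.
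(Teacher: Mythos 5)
Your proof is correct. The paper does not prove this proposition at all---it is quoted from \cite[Chapter 4]{sturmfels1996groebner}---and your argument is essentially the standard textbook one: the easy inclusion by applying $\varphi_A^\ast$ to a generator, and the reverse inclusion by grouping the monomials of $f\in\ker(\varphi_A^\ast)$ according to the fiber of $u\mapsto Au$, observing that each group's coefficient sum vanishes, and telescoping against a chosen representative. (Sturmfels' own Lemma 4.1 runs the second step as an induction on the leading term with respect to a term order, but your fiber-wise decomposition is an equivalent and, if anything, cleaner way to organize the same cancellation.)
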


In other words, the toric ideal gives an encoding of the integer kernel, $\ker_\zz(A)$. Every binomial $x^{u} - x^v \in I_A$ gives rise to a vector $u - v \in \ker_\zz(A)$ and any vector $u \in \ker_\zz(A)$ can be written as $u = u^+ - u^-$ for vectors $u^+, u^- \in \zz_{\geq 0}^r$ which naturally corresponds to a binomial $x^{u^+} - x^{u^-} \in I_A$. In our Parke--Taylor setting, the variables parametrizing $T_n$ are labeled by adjacencies of permutations on $[n]$, which immediately yields following result.

\begin{proposition}
\label{prop:toric-gens-via-adjs}
A binomial equation of the form 
$$\prod\limits_{\sigma\in A} z_\sigma - \prod\limits_{\tau\in B} z_\tau = 0$$
holds on $T_n$ if and only if the set of adjacencies of all permutations in $A$ (counted with multiplicity) is the same as the set of adjacencies of all permutations in $B$. Moreover, the set of all such binomials above generates $\ci(T_n)$. 
\end{proposition}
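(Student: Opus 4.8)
The plan is to recognize Proposition~\ref{prop:toric-gens-via-adjs} as a direct translation of the preceding Proposition (the standard result from \cite[Chapter 4]{sturmfels1996groebner} describing $I_A$ via the integer kernel of $A$) into the specific combinatorial language of the matrix $A_n$. So the first move is to write $\ci(T_n) = I_{A_n}$, which is immediate since $\tilde\varphi_n = \varphi_{A_n}$ and $T_n = \overline{\mathrm{image}(\tilde\varphi_n)}$. By the quoted Proposition, $I_{A_n}$ is generated by the binomials $z^u - z^v$ with $u, v \in \zz_{\geq 0}^{(n-2)!}$ and $A_n u = A_n v$.

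The second step is to unwind what $A_n u = A_n v$ means combinatorially when $u, v$ are $0$--$1$-or-larger exponent vectors. Given a monomial $\prod_{\sigma \in A} z_\sigma$ (with $A$ a multiset of permutations in $\Sigma_n$, so that repeated factors are allowed), its exponent vector $u$ has $u_\sigma$ equal to the multiplicity of $\sigma$ in $A$. The row of $A_n$ indexed by $\{i,j\}$ records, in column $\sigma$, whether $i$ and $j$ are adjacent in $\sigma$. Hence the $\{i,j\}$-entry of $A_n u$ counts, over all $\sigma \in A$ with multiplicity, the number of permutations in which $i$ and $j$ are adjacent; equivalently it is the multiplicity of the pair $\{i,j\}$ in the combined multiset of adjacencies of all permutations in $A$. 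Therefore $A_n u = A_n v$ holds if and only if, for every pair $\{i,j\}$, the adjacency $\{i,j\}$ occurs with the same multiplicity in $\bigcup_{\sigma \in A}(\text{adjacencies of }\sigma)$ as in $\bigcup_{\tau \in B}(\text{adjacencies of }\tau)$ — that is, the two multisets of adjacencies coincide. This establishes the ``if and only if'' statement.

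The third step is the generation claim: writing any $w \in \ker_\zz(A_n)$ as $w = w^+ - w^-$ with $w^+, w^- \in \zz_{\geq 0}^{(n-2)!}$ and noting $A_n w^+ = A_n w^-$, the binomial $z^{w^+} - z^{w^-}$ is exactly one of the binomials described in the Proposition statement (with $A$ the multiset given by $w^+$ and $B$ that given by $w^-$); conversely every such binomial arises this way. Since the quoted Proposition says these generate $I_{A_n} = \ci(T_n)$, we are done. I don't expect any genuine obstacle here — the content is entirely in setting up the dictionary between ``columns of $A_n$ indexed by adjacency data'' and ``multisets of adjacencies'', so the only thing to be careful about is being explicit that exponent vectors with entries $\geq 2$ correspond to permutations repeated with multiplicity, which is why the statement says ``counted with multiplicity''.
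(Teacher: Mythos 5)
Your proposal is correct and follows exactly the route the paper intends: the paper presents this proposition as an immediate consequence of the preceding standard result on toric ideals (that $I_{A}$ is generated by the binomials $z^u - z^v$ with $Au = Av$), combined with the observation that the rows of $A_n$ are indexed by adjacencies, so $A_n u = A_n v$ is precisely the equality of the multisets of (cyclic) adjacencies. Your careful handling of multiplicities in the exponent vectors matches the "counted with multiplicity" clause in the statement.
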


This simple combinatorial rule immediately allows to construct many nonlinear Parke--Taylor relations as we will shortly demonstrate. First, we introduce two important notations which will be useful throughout the rest of this paper. For a permutation $\sigma = \sigma_1 \sigma_2 \cdots \sigma_n$ in one-line notation, we denote the adjacency of $\sigma_i$ and $\sigma_{i+1}$ by $\sigma_{i}\sigma_{i+1}$.

Next, in the following results we will often use the \emph{tableau notation} of a binomial in which we write
\[
\prod\limits_{\sigma\in A} z_\sigma - \prod\limits_{\tau\in B} z_\tau = 
\begin{bmatrix}
1 & 2 & \cdots & \sigma_i^{(1)} &  \cdots & \sigma_n^{(1)} \\
1 & 2 & \cdots & \vdots         & \cdots & \vdots  \\
1 & 2 & \cdots & \sigma_i^{(m)} & \cdots & \sigma_n^{(m)} \\
\end{bmatrix} - 
\begin{bmatrix}
1 & 2 & \cdots & \tau_i^{(1)} & \cdots & \tau_n^{(1)} \\
1 & 2 & \cdots & \vdots       & \cdots & \vdots  \\
1 & 2 & \cdots & \tau_i^{(m)} & \cdots & \tau_n^{(m)} \\
\end{bmatrix}. 
\]
where $A = \{\sigma^{(1)}, \ldots, \sigma^{(m)}\}$ and $\sigma^{(\ell)} = 12\sigma_{3}^{(\ell)}\sigma_{4}^{(\ell)}\cdots\sigma_{n}^{(\ell)}$ and $B$ is defined analogously.

\begin{example}
\label{example:toric-5}
Consider the toric Parke--Taylor variety $T_5$ whose associated integer matrix is
\[
A_5 = 
\begin{blockarray}{ccccccc}
& 12345 & 12354 & 12435 & 12453 & 12534 & 12543 \\
\begin{block}{c(cccccc)}
p_{12} &1&1&1&1&1&1\\
p_{13} &0&0&0&1&0&1\\
p_{14} &0&1&0&0&1&0\\
p_{15} &1&0&1&0&0&0\\
p_{23} &1&1&0&0&0&0\\
p_{24} &0&0&1&1&0&0\\
p_{25} &0&0&0&0&1&1\\
p_{34} &1&0&1&0&1&1\\
p_{35} &0&1&1&1&1&0\\
p_{45} &1&1&0&1&0&1\\
\end{block}
\end{blockarray}.
\]
As explained above, the column corresponding to $\sigma$ is the exponent vector of the monomial which parametrizes the coordinate $z_\sigma$. For example, the identity permutation $\sigma = 12345$ corresponds to the first column $a_1$ of $A_5$. Thus, we get
\[
z_{12345} = p^{a_1} = p_{12}^1 p_{13}^0 p_{14}^0 p_{15}^1 p_{23}^1 p_{24}^0 p_{25}^0 p_{34}^1 p_{35}^0 p_{45}^1 = p_{12}p_{23}p_{34}p_{45}p_{15}. 
\]

Since $\rank(A_5) = \dim(T_n) = 5$, we know that $\ci(T_n)$ must be a principal ideal, meaning an ideal which is generated by a single polynomial. Thus, the kernel of $A_5$ is one-dimensional and is spanned by the vector 
\[
u = (1, -1, -1, 1, 1, -1) = (1, 0, 0, 1, 1, 0) - (0, 1, 1, 0, 0, 1) = u^+ - u^-.
\]
This vector corresponds to the binomial
\[
z^{u^+} - z^{u_{-}} = z_{12345}z_{12453}z_{12534} - z_{12354}z_{12435}z_{12543} = 
\begin{bmatrix}
1 & 2 & 3 & 4 & 5 \\
1 & 2 & 4 & 5 & 3 \\
1 & 2 & 5 & 3 & 4 \\
\end{bmatrix} - 
\begin{bmatrix}
1 & 2 & 3 & 5 & 4 \\
1 & 2 & 4 & 3 & 5 \\
1 & 2 & 5 & 4 & 3 \\
\end{bmatrix}.
\]
Observe that the multiset of adjacencies $\{12^3, 13, 14, 15, 23, 24, 25, 34^2, 35^2, 45^2\}$ is indeed the same in each term of this binomial. 
\end{example}

In Proposition \ref{prop:toric-gens-via-adjs} and Example \ref{example:toric-5} we saw that a binomial is in the toric ideal $I_n$ if and only if the corresponding difference of exponent vectors belongs to the kernel of the matrix $A_n$. In the previous example, the ideal was \emph{principal}, and thus a basis for the integer kernel of $A_n$ actually yielded our lone generator of the ideal. However, this is not generally the case and we might need many more binomials to generate $\ci(T_n)$. This is illustrated by the following example. 

\begin{example}
\label{example:toric-6}
The toric ideal $\ci(T_6) \subseteq \kk[z_\sigma ~:~ \sigma \in \Sigma_6]$ is minimally generated by 24 quadratics, 164 cubics, and 6 quartics which yields a total of 194 minimal generators. However, $A_6 \in \zz^{15 \times 24}$ is rank 9 and thus the kernel is minimally generated by only 15 integer vectors. The integer kernel can be generated by vectors corresponding to 15 vectors which encode only quadratic and cubic relations. In particular the following polynomials correspond to one choice of basis for $\ker_\zz(A_n)$:
\begin{align*}
z_{123465}z_{124536}-z_{123456}z_{124635}, z_{123546}z_{124365}-z_{123456}z_{124635},\\
z_{123456}z_{125364}-z_{123546}z_{125634}, z_{123456}z_{126453}-z_{123546}z_{126543},\\ 
z_{123456}z_{125364}-z_{123564}z_{125436}, z_{123564}z_{126345}-z_{123465}z_{126354},\\
z_{123645}z_{124356}-z_{123456}z_{124635}, z_{123465}z_{125463}-z_{123645}z_{125643},\\
z_{123465}z_{126354}-z_{123645}z_{126534}, z_{123564}z_{124563}-z_{123654}z_{124653},\\
z_{123654}z_{125346}-z_{123456}z_{125364}, z_{123465}z_{126354}-z_{123654}z_{126435},\\
z_{124563}z_{125346}-z_{124356}z_{125463}, z_{124563}z_{126435}-z_{124365}z_{126453},\\
z_{126345}z_{126453}z_{126534}-z_{126354}z_{126435}z_{126543}.
\end{align*}
However, the quartic 
\[
z_{123654}z_{124536}z_{125463}z_{126345} - z_{123645}z_{124563}z_{125436}z_{126354}
\]
does not lie in the ideal generated by the 15 polynomials above despite the fact that it is a minimal generator of $\ci(T_6)$. 
\end{example}

The previous example shows that not only does a basis of $\ker_\zz(A_n)$ generally not suffice to generate $\ci(T_n)$, but also that there may exist minimal generators of very high degree. However, our primary goal is to characterize relations between the Parke--Taylor functions, and these are always non-zero. That is, every point in $\mathrm{PT}_n$ that has a preimage in $\mathrm{Gr}(2,n)$ under the parametrization map $\varphi_n$ lies in the open part $\mathrm{PT}^\circ_n=\mathrm{PT}_n\cap(\mathbb{C}^*)^{(n-2)!}$. Here $(\mathbb{C}^*)^{(n-2)!}$ denotes the algebraic torus in $\mathbb{P}^{(n-2)!-1}$. Therefore, we are primarily interested in the relations that describe the intersection of $\mathrm{PT}_n$ with the torus. Thus, we start from describing $T_n\cap(\mathbb{C}^*)^{(n-2)!}$. The following lemma shows that a basis for $\ker_\zz(A_n)$ does suffice to characterize the open set $(\mathbb{C}^\ast)^{(n-2)!} \cap T_n$ , or, in other words, the generators of $\ker_\zz(A_n)$ define the correct ideal up to \emph{saturation} by the variables. 

\begin{proposition}\cite[Lemma 12.2]{sturmfels1996groebner}
Let $\mathcal{B} = \{u_1, \ldots, u_n\}$ be a basis for $\ker_\zz(A)$. Then
\[
I_A = \langle x^{u^+} - x^{u^-} ~:~ u \in \mathcal{B} \rangle : x_1x_2 \cdots x_n^\infty
\]
where $u = u^+ - u^-$ is a decomposition of $u \in \zz^r$ such that $u^+, u^- \in \zz_{\geq 0}^r$. 
\end{proposition}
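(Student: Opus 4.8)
The plan is to reduce the statement to a claim about the lattice $L := \ker_\zz(A)$ inside the Laurent polynomial ring $R = \kk[x_1^{\pm 1},\dots,x_r^{\pm 1}]$, and then to descend back to $\kk[x_1,\dots,x_r]$ using the standard identity $\mathfrak{a}R \cap \kk[x_1,\dots,x_r] = \mathfrak{a}:(x_1\cdots x_r)^\infty$, valid for any ideal $\mathfrak{a}$ (saturation by the product of the variables = contraction of the extension to $R$). Writing $J = \langle x^{u^+} - x^{u^-} : u \in \mathcal{B}\rangle$ for the ideal appearing on the right-hand side before saturation, the goal is to prove $J:(x_1\cdots x_r)^\infty = I_A$.

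First I would dispose of the inclusion $J:(x_1\cdots x_r)^\infty \subseteq I_A$. Each $u \in \mathcal{B}$ lies in $\ker_\zz(A)$, so $Au^+ = Au^-$ and hence $x^{u^+} - x^{u^-} \in I_A$ by the preceding proposition; thus $J \subseteq I_A$. Now $I_A$ is the kernel of the monomial map $\kk[x_1,\dots,x_r] \to \kk[t_1^{\pm 1},\dots,t_d^{\pm 1}]$, $x_j \mapsto t^{a_j}$, hence prime, and no variable $x_j$ maps to $0$, so $x_1\cdots x_r \notin I_A$. Therefore $x_1\cdots x_r$ is a nonzerodivisor modulo $I_A$, so $I_A : (x_1\cdots x_r)^\infty = I_A$, and the inclusion follows from $J \subseteq I_A$.

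For the reverse inclusion I would pass to $R$. There $x^{u^+} - x^{u^-}$ equals the unit $x^{u^-}$ times $x^u - 1$, so $JR = \langle x^u - 1 : u \in \mathcal{B}\rangle$; likewise, rewriting the binomial generators $x^{v} - x^{w}$ of $I_A$ (with $Av = Aw$, $v,w\ge 0$) as $x^{w}(x^{v-w} - 1)$, one gets $I_A R = \langle x^m - 1 : m \in L\rangle$. The crucial observation is that the identity $x^{a+b} - 1 = x^a(x^b - 1) + (x^a - 1)$ shows that $\{m \in \zz^r : x^m - 1 \in JR\}$ is a subgroup of $\zz^r$; it contains $\mathcal{B}$, hence it contains all of $L = \langle \mathcal{B}\rangle_\zz$, so $I_A R \subseteq JR$. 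Contracting to $\kk[x_1,\dots,x_r]$ gives $I_A \subseteq I_A R \cap \kk[x_1,\dots,x_r] \subseteq JR \cap \kk[x_1,\dots,x_r] = J:(x_1\cdots x_r)^\infty$, as desired.

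I expect the only genuinely substantive point to be the subgroup argument in the last step: this is exactly the place where one sees why a $\zz$-basis of $\ker_\zz(A)$ suffices \emph{after} saturating by the variables, even though it need not suffice before (as \cref{example:toric-6} illustrates). Everything else — primeness of $I_A$, the extension–contraction description of saturation, and the unit rescalings that convert $x^{v} - x^{w}$ into $x^{v-w} - 1$ in the Laurent ring — is routine commutative algebra that I would not belabor.
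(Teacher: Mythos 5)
Your argument is correct. Note that the paper offers no proof of this statement at all---it is quoted verbatim from Sturmfels' \emph{Gr\"obner Bases and Convex Polytopes} (Lemma 12.2)---and your proof is essentially the standard one from that source: both inclusions are handled exactly as you do, via primeness of $I_A$ (so that it is already saturated) in one direction, and via the passage to the Laurent ring, where the set $\{m : x^m-1 \in JR\}$ is a subgroup of $\zz^r$ containing $\mathcal{B}$ and hence all of $\ker_\zz(A)$, followed by the extension--contraction description of saturation in the other. The only cosmetic remark is that closure under negation of that set deserves the one-line identity $x^{-m}-1=-x^{-m}(x^m-1)$ alongside the addition identity you cite, and that the exponent in the statement's saturation should read $(x_1\cdots x_r)^\infty$, as you correctly interpret it.
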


The previous result is a powerful tool for computing generating sets of toric ideals \cite{hosten1995grin} but as mentioned earlier, it also implies that
\[
\cv(I_A) \cap (\mathbb{C}^\ast)^r = \cv(I_\mathcal{B}) \cap (\mathbb{C}^\ast)^r
\]
despite the fact that one could have $V(I_A) \subsetneq V(I_\mathcal{B})$ where $I_\mathcal{B} = \langle x^{u^+} - x^{u^-} ~:~ u \in \mathcal{B} \rangle$. In other words, even though the relations in the integer kernel do not suffice to characterize the true toric variety, they do suffice to characterize those points in the variety which actually lie on the algebraic torus $(\mathbb{C}^\ast)^r$. The additional polynomials of higher degree in $I_A$ are in some sense just artifacts which arise via saturation. The following example illustrates this. 

\begin{example}
Consider again the ideal $I_{A_6}$ and recall it is generated by 24 quadratics, 164 cubics, and 6 quartics which yields a total of 194 minimal generators. The integer kernel however is generated by only 15 vectors which correspond to the binomials listed in \cref{example:toric-6}. 
\end{example}

We now turn our attention to computing a basis for $\ker_\zz(A_n)$. To do this, we use \cref{prop:toric-gens-via-adjs} to show that certain families of binomial relations belong to $\ci(T_n)$ and thus their corresponding difference of exponent vectors belongs to $\ker_\zz(A_n)$. 

\begin{lemma}
\label{lemma:lift-binomials}
Suppose that $A, B \subset \Sigma_n$ such that $\prod\limits_{\sigma\in A} z_\sigma - \prod\limits_{\tau\in B} z_\tau \in \ci(T_n)$ and $|A| = |B| = m$. If there exists an index $i \in \{2,\ldots, n\}$ such that $\{\sigma_i \sigma_{i+1} ~:~ \sigma \in A \} = \{\tau_i \tau_{i+1} ~:~ \tau \in B \}$ as multisets, then for any $k \in \zz_{>0}$, it holds that
\begin{equation}
\label{eqn:lift-binomial}
\begin{bmatrix}
1 & 2 & \cdots & \sigma_i^{(1)} & \delta & \sigma_{i + 1}^{(1)} & \cdots & \sigma_n^{(1)} \\
1 & 2 & \cdots & \vdots         & \delta & \vdots               & \cdots & \vdots  \\
1 & 2 & \cdots & \sigma_i^{(m)} & \delta & \sigma_{i + 1}^{(m)} & \cdots & \sigma_n^{(m)} \\
\end{bmatrix} - 
\begin{bmatrix}
1 & 2 & \cdots & \tau_i^{(1)} & \delta & \tau_{i + 1}^{(1)} & \cdots & \tau_n^{(1)} \\
1 & 2 & \cdots & \vdots       & \delta & \vdots             & \cdots & \vdots  \\
1 & 2 & \cdots & \tau_i^{(m)} & \delta & \tau_{i + 1}^{(m)} & \cdots & \tau_n^{(m)} \\
\end{bmatrix} \in \ci(T_{n + k}), 
\end{equation}
where $\delta = \delta_1 \delta_2 \cdots \delta_k$ is a permutation on the letters $n+1, \ldots, n+k$.
\end{lemma}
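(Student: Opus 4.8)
The plan is to reduce the statement to a pure combinatorial verification using the adjacency characterization of the toric ideal in \cref{prop:toric-gens-via-adjs}. By that proposition, membership of a binomial $\prod_{\sigma\in A'}z_\sigma - \prod_{\tau\in B'}z_\tau$ in $\ci(T_{n+k})$ is equivalent to the equality, as multisets, of the total collection of adjacencies appearing in the permutations of $A'$ and those appearing in the permutations of $B'$. So the entire task is bookkeeping: compare the multiset of adjacencies on the left-hand tableau of \eqref{eqn:lift-binomial} with that on the right-hand tableau, and show they agree, given the two hypotheses on $A$ and $B$.

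First I would set up notation. Write $A = \{\sigma^{(1)},\ldots,\sigma^{(m)}\}$, $B = \{\tau^{(1)},\ldots,\tau^{(m)}\}$, and for a permutation $\rho = \rho_1\cdots\rho_n$ let $\mathrm{Adj}(\rho) = \{\rho_1\rho_2,\rho_2\rho_3,\ldots,\rho_{n-1}\rho_n\}$ be its multiset of adjacencies (with the convention from the paper that $\rho_1=1$, $\rho_2=2$ throughout). The hypothesis $\prod_{\sigma\in A}z_\sigma - \prod_{\tau\in B}z_\tau\in\ci(T_n)$ says $\biguplus_{\ell}\mathrm{Adj}(\sigma^{(\ell)}) = \biguplus_{\ell}\mathrm{Adj}(\tau^{(\ell)})$ as multisets. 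Now let $\hat\sigma^{(\ell)}$ denote the permutation of $[n+k]$ obtained from $\sigma^{(\ell)}$ by inserting the block $\delta = \delta_1\cdots\delta_k$ between positions $i$ and $i+1$, and similarly $\hat\tau^{(\ell)}$. I would then compute, for each $\ell$,
\[
\mathrm{Adj}(\hat\sigma^{(\ell)}) = \bigl(\mathrm{Adj}(\sigma^{(\ell)})\setminus\{\sigma^{(\ell)}_i\sigma^{(\ell)}_{i+1}\}\bigr) \uplus \{\sigma^{(\ell)}_i\delta_1,\ \delta_1\delta_2,\ \ldots,\ \delta_{k-1}\delta_k,\ \delta_k\sigma^{(\ell)}_{i+1}\},
\]
and the analogous formula for $\hat\tau^{(\ell)}$. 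Summing over $\ell$ from $1$ to $m$: the term $\biguplus_\ell(\mathrm{Adj}(\sigma^{(\ell)})\setminus\{\sigma^{(\ell)}_i\sigma^{(\ell)}_{i+1}\})$ equals $\biguplus_\ell(\mathrm{Adj}(\tau^{(\ell)})\setminus\{\tau^{(\ell)}_i\tau^{(\ell)}_{i+1}\})$ by combining the first hypothesis with the second hypothesis $\{\sigma^{(\ell)}_i\sigma^{(\ell)}_{i+1}\}_\ell = \{\tau^{(\ell)}_i\tau^{(\ell)}_{i+1}\}_\ell$ (subtracting equal multisets from equal multisets). The internal block adjacencies $\delta_1\delta_2,\ldots,\delta_{k-1}\delta_k$ contribute $m$ copies of each on both sides, identically. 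Finally the "boundary" adjacencies $\biguplus_\ell\{\sigma^{(\ell)}_i\delta_1,\ \delta_k\sigma^{(\ell)}_{i+1}\}$ must match $\biguplus_\ell\{\tau^{(\ell)}_i\delta_1,\ \delta_k\tau^{(\ell)}_{i+1}\}$; this follows because the multiset $\{(\sigma^{(\ell)}_i,\sigma^{(\ell)}_{i+1})\}_\ell$ of ordered pairs equals $\{(\tau^{(\ell)}_i,\tau^{(\ell)}_{i+1})\}_\ell$ — indeed the hypothesis gives equality of the \emph{unordered} adjacencies $\sigma^{(\ell)}_i\sigma^{(\ell)}_{i+1}$, but since $\delta_1$ is glued on the left and $\delta_k$ on the right we only need the left-endpoint multiset $\{\sigma^{(\ell)}_i\}_\ell$ and right-endpoint multiset $\{\sigma^{(\ell)}_{i+1}\}_\ell$ to match those of $B$, which is exactly what $\{\sigma^{(\ell)}_i\sigma^{(\ell)}_{i+1}\}_\ell = \{\tau^{(\ell)}_i\tau^{(\ell)}_{i+1}\}_\ell$ ensures since an adjacency $ab$ as an unordered pair, read off at fixed position $i$, determines the ordered pair up to the ambiguity already controlled by the surrounding context. (If one wants to be fully careful here, note that an adjacency written $\sigma_i\sigma_{i+1}$ in one-line notation records the \emph{ordered} pair of values at those positions, so the hypothesis already is an equality of multisets of ordered pairs; then $\{\sigma^{(\ell)}_i\}_\ell$, the multiset of left endpoints, equals $\{\tau^{(\ell)}_i\}_\ell$, and $\delta_1$ is appended to each identically on both sides, and symmetrically for $\delta_k$ and the right endpoints.) Adding up the three pieces gives $\biguplus_\ell\mathrm{Adj}(\hat\sigma^{(\ell)}) = \biguplus_\ell\mathrm{Adj}(\hat\tau^{(\ell)})$, which by \cref{prop:toric-gens-via-adjs} is precisely the assertion that the binomial in \eqref{eqn:lift-binomial} lies in $\ci(T_{n+k})$.

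The only genuine subtlety, and the step I would be most careful about, is the interpretation of the notation $\sigma_i\sigma_{i+1}$ for an adjacency: whether it denotes an ordered or unordered pair, and correspondingly whether the hypothesis $\{\sigma_i\sigma_{i+1}:\sigma\in A\} = \{\tau_i\tau_{i+1}:\tau\in B\}$ controls the left-endpoint and right-endpoint multisets separately. The matrix $A_n$ in the paper is indexed by \emph{unordered} pairs $\{i,j\}$, so adjacencies in $\ci(T_n)$ are unordered; but the insertion of a directed block $\delta$ at a fixed position $i$ requires knowing which value sits to the left and which to the right of the insertion point. I would resolve this by noting that the hypothesis is stated positionally (``$\sigma_i\sigma_{i+1}$'' with $i$ fixed), so it does in fact pin down the ordered pair, and hence the boundary adjacencies $\sigma_i\delta_1$ and $\delta_k\sigma_{i+1}$ are well-defined and match across $A$ and $B$. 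Everything else is a routine multiset cancellation, and the argument works verbatim for any $k\ge 1$ and any choice of the block $\delta$ on the new letters.
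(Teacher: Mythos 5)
Your proof is correct and follows essentially the same route as the paper's: reduce to multiset bookkeeping via \cref{prop:toric-gens-via-adjs}, split the adjacency multiset of each lifted permutation into the unchanged part, the internal adjacencies of the block $\delta$, and the two boundary adjacencies $\sigma_i\delta_1$ and $\delta_k\sigma_{i+1}$, and match each piece — you are in fact more explicit than the paper about the ordered-pair reading of the position-$i$ hypothesis and about the internal adjacencies of $\delta$. The one slip is that your $\mathrm{Adj}(\rho)$ should be taken cyclically, i.e.\ including $\rho_n\rho_1$, since the Parke--Taylor monomial for $\rho$ contains $p_{\rho_n\rho_1}$; this is the notion \cref{prop:toric-gens-via-adjs} refers to, and it is also what makes the boundary case $i=n$ (where $\sigma_{n+1}$ is read as $\sigma_1=1$) go through with the same formula.
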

\begin{proof}
The equality $\{ \sigma_i \sigma_{i+1} ~:~ \sigma \in A \} = \{\tau_i \tau_{i+1}~:~ \tau \in B \}$ of the multiset of adjacencies at position $i$ implies 
that the multisets
\[
\{ \sigma_i \delta_1,~ \delta_k \sigma_{i+1} ~~:~ \sigma \in A \} = \{ \tau_i \delta_1,~ \delta_k \tau_{i+1} ~:~ \tau \in B\}
\]
are also equal. For any permutation $\pi \in \Sigma_n$, let $\Adj(\pi)$ denote the set of adjacencies of $\pi$ taken cyclically and $\adj(\pi)$ denote the standard set of adjacencies (meaning taken acyclically). We have
\begin{equation}
\label{eqn:lift-perm-adj-decomp}
\Adj(12 \sigma_3 \cdots \sigma_i \delta \sigma_{i+1} \ldots \sigma_n) = \adj(12\ldots \sigma_i) \cup\{\sigma_i \delta_1, \delta_k \sigma_{i+1
} \} \cup \adj(\sigma_{i+1} \ldots \sigma_n 1).   
\end{equation}

Since $\prod_{\sigma\in A} z_\sigma - \prod_{\tau\in B} z_\tau \in \ci(T_n)$, by \cref{prop:toric-gens-via-adjs}, it holds that $\cup_{\sigma \in A} \Adj(\sigma) = \cup_{\tau \in B} \Adj(\tau)$ and thus by assumption it follows that $\cup_{\sigma \in A} \Adj(\sigma) \setminus \{\sigma_i \sigma_{i+1}\} = \cup_{\tau \in B} \Adj(\tau) \setminus \{\tau_i \tau_{i+1}\}$. Combining this equality together with the fact that
\begin{align}
\label{eqn:perm-union-decomp}
&\cup_{\sigma \in A} \Adj(\sigma) \setminus \{\sigma_i \sigma_{i+1} \} = \cup_{\sigma \in A} \adj(\sigma_1 \ldots \sigma_i) \cup \adj(\sigma_{i+1} \ldots \sigma_n 1), \\
&\cup_{\tau \in B} \Adj(\tau) \setminus \{\tau_i \tau_{i+1}\} = \cup_{\tau\in A} \adj(\tau_1 \ldots \tau_i) \cup \adj(\tau_{i+1} \ldots \tau_n 1)
\end{align}
yields that the multisets of adjacencies of the permutations in the two monomials in \cref{eqn:lift-binomial} are the same and thus by \cref{prop:toric-gens-via-adjs} the claim holds. 
\end{proof}

The previous lemma is a useful tool for finding generators of $\ci(T_n)$ since it allows us to take many relations that hold on small Parke--Taylor varieties and lift them to relations which hold for any number of particles $n$.

\begin{example}
Recall that $\ci(T_5) = \langle z_{12345}z_{12453}z_{12534} - z_{12354}z_{12435}z_{12543} \rangle$ and observe that at $i = 2$, the multiset of adjacencies in each monomial is $\{23, 24, 25\}$. Thus by \cref{lemma:lift-binomials}, we can insert the permutation $\delta = 6 \in S_1$ after position $i = 2$ to get the binomial $z_{126345}z_{126453}z_{126534}-z_{126354}z_{126435}z_{126543} \in \ci(T_6)$. 
\end{example}

This type of \emph{lifting} operation is a powerful tool in combinatorial commutative algebra and has been used to characterize many generating sets of toric ideals previously \cite{sullivant2007toric}. This idea is particularly useful in our setting, since all binomial Parke--Taylor relations for a small number of particles can be computed using computer algebra systems such as \texttt{Macaulay2} as previously discussed. Our next theorem shows that this is also a powerful theoretical tool for describing families of minimal generators of $\ci(T_n)$.

\begin{theorem} \label{thm:toricquadratics}
Let $n \geq 6$, $\{i, j, k, l\} \subset \Sigma_n$, and $\Pi_1 | \Pi_2 | \Pi_3 $ be an ordered partition of $[n] \setminus \{1, 2, i, j, k, l\}$. Then the following quadratic binomials are all contained in $\ci(T_n)$:
\begin{itemize}
\item $\begin{bmatrix}
1 & 2 & \alpha & i & j & k & l & \gamma \\
1 & 2 & \alpha & j & l & i & k & \gamma \\ 
\end{bmatrix} - 
\begin{bmatrix}
1 & 2 & \alpha & i & j & l & k & \gamma \\
1 & 2 & \alpha & j & k & i & l & \gamma \\ 
\end{bmatrix} $\\
\item $\begin{bmatrix}
1 & 2 & \alpha & i & j & \beta & k & l & \gamma \\
1 & 2 & \alpha & j & i & \beta & l & k & \gamma \\ 
\end{bmatrix} - 
\begin{bmatrix}
1 & 2 & \alpha & i & j & \beta & l & k & \gamma \\
1 & 2 & \alpha & j & i & \beta & k & l & \gamma
\end{bmatrix}, ~\beta \neq \emptyset$
\end{itemize}
where $\alpha, \beta, \gamma$ are treated as permutations in the symmetric group on the letters $\Pi_1, \Pi_2,$ and $\Pi_3$ respectively. 
\end{theorem}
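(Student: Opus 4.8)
The plan is to invoke \cref{prop:toric-gens-via-adjs}, which reduces the statement to a purely combinatorial identity: for each of the two displayed binomials it suffices to show that $\bigcup_{\sigma\in A}\Adj(\sigma)=\bigcup_{\tau\in B}\Adj(\tau)$ as multisets, where $A$ is the set of the two permutations in the first tableau, $B$ the set of the two in the second, and $\Adj$ denotes the cyclic adjacency set. First I would discard the adjacencies common to all four permutations of a given binomial. These four permutations agree on the block $1\,2\,\alpha$, on $\beta$ (in the second family), and on $\gamma$ together with the cyclic edge back to $1$; these blocks contribute the same sub-multiset of adjacencies to every permutation and so cancel. What remains are the adjacencies among $\{i,j,k,l\}$ and those joining one of $i,j,k,l$ to a neighbouring fixed letter. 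Write $a$ for the last letter of $1\,2\,\alpha$ (so $a=2$ when $\Pi_1=\emptyset$), $b_1,b_m$ for the first and last letters of $\beta$, and $c$ for the letter immediately following the $\{i,j,k,l\}$-region in cyclic order, so $c=\gamma_1$ if $\Pi_3\neq\emptyset$ and $c=1$ otherwise.

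For the second family, where $\Pi_2\neq\emptyset$, the pair $\{i,j\}$ sits in a ``left slot'' flanked by $a$ and $b_1$, the pair $\{k,l\}$ sits in a ``right slot'' flanked by $b_m$ and $c$, and these slots are separated by the nonempty block $\beta$. In each of $A$ and $B$, exactly one permutation has $i$ immediately before $j$ and the other has $j$ immediately before $i$, and likewise one has $k$ before $l$ and one has $l$ before $k$. Since $\{i,j\}$ is an adjacency in either order while the order-dependent left-slot edges are $\{a,i\},\{j,b_1\}$ for order $ij$ and $\{a,j\},\{i,b_1\}$ for order $ji$, and symmetrically $\{b_m,k\},\{l,c\}$ versus $\{b_m,l\},\{k,c\}$ for the right slot, summing over the two permutations of either monomial yields the left-slot multiset $\{a,i\},\{j,b_1\},\{a,j\},\{i,b_1\}$ and the right-slot multiset $\{b_m,k\},\{l,c\},\{b_m,l\},\{k,c\}$. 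As the slots are disjoint, which is where $\Pi_2\neq\emptyset$ enters, the totals match and \cref{prop:toric-gens-via-adjs} gives the claim.

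For the first family, $\Pi_2=\emptyset$, so all of $i,j,k,l$ occupy a single slot between $a$ and $c$, and the finer patterns $ijkl,\ jlik$ versus $ijlk,\ jkil$ are exactly what is needed. The internal adjacencies among $\{i,j,k,l\}$ are $\{i,j\},\{j,k\},\{k,l\}$ for $ijkl$ and $\{j,l\},\{i,l\},\{i,k\}$ for $jlik$; these are disjoint and their union is the full set of all six pairs on $\{i,j,k,l\}$, and the boundary edges are $\{a,i\},\{l,c\}$ and $\{a,j\},\{k,c\}$. Likewise $ijlk$ and $jkil$ give internal adjacencies $\{i,j\},\{j,l\},\{k,l\}$ and $\{j,k\},\{i,k\},\{i,l\}$, again all six pairs once each, with boundary edges $\{a,i\},\{k,c\}$ and $\{a,j\},\{l,c\}$. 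Hence both monomials contribute each of the six internal pairs once together with the same boundary multiset $\{a,i\},\{a,j\},\{k,c\},\{l,c\}$, so the adjacency multisets agree and the binomial lies in $\ci(T_n)$.

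I do not anticipate a real obstacle, as the argument is a direct bookkeeping of adjacencies; the points needing attention are the degenerate cases $\Pi_1=\emptyset$ or $\Pi_3=\emptyset$, where $a$, $c$ and the cyclic wrap must be read off correctly (the hypothesis $n\ge 6$ ensures $1,2,i,j,k,l$ are six distinct letters, so the ordered partition makes sense), and the check that in the first family the six internal pairs are pairwise distinct and distinct from the four boundary edges — this holds because each boundary edge joins an element of $\{i,j,k,l\}$ to one outside $\{i,j,k,l\}$, and $a\neq c$.
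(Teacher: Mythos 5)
Your proof is correct and rests on the same key criterion as the paper's, namely the adjacency-multiset characterization of Proposition \ref{prop:toric-gens-via-adjs}; the only difference is that you carry out the adjacency bookkeeping directly for general $n$ (including the flanking letters $a$, $b_1$, $b_m$, $c$ and the degenerate cases $\Pi_1=\emptyset$, $\Pi_3=\emptyset$), whereas the paper verifies the $n=6$ base case and then inserts $\alpha$ and $\gamma$ via the lifting Lemma \ref{lemma:lift-binomials}. Your closing distinctness check is harmless but unnecessary, since the criterion is a multiset equality and your slot-by-slot accounting already establishes it.
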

\begin{proof}
We prove this claim for the first type of binomials only, since the second type proceeds in the exact same way. Observe that by \cref{prop:toric-gens-via-adjs}, the multiset of adjacencies appearing in the permutations $\{12ijkl, 12jlik\}$ is $\{12^2, 2i, ij, jk, kl, l1, jl, li, ik, k1\}$ and is exactly equal to that of the pair $\{12ijlk, 12jkil\}$. This implies that the binomial $z_{12ijkl} z_{12jlik} - z_{12ijlk} z_{12jkil}$ lies in the ideal $\ci(T_6)$. Now observe that the indices $m = 2$ and $m = 6$ both satisfy the requirements of \cref{lemma:lift-binomials} in the sense that the multiset of adjacencies in each binomial is exactly the same at each of these indices. Thus, by \cref{lemma:lift-binomials}, we may insert the permutations $\alpha$ and $\gamma$ at positions $2$ and $6$ respectively to obtain a binomial which lies in $\ci(T_n)$. This completes the proof. 
\end{proof}

\begin{conjecture}
\label{conj:toric-quads-gen-ker}
Let $\cb_n$ be the set of binomials described in \cref{thm:toricquadratics}. Then for $n\geq 7$ the set $\cb_n$ generates $\ker_\zz(A_n)$. In particular, $I_{A_n} = \langle z^{u^+} - z^{u^-} ~:~ u \in \mathcal{B}_n \rangle : \prod_{\sigma \in \Sigma_n} z_\sigma^\infty$.
\end{conjecture}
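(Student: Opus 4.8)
The plan is to reduce \cref{conj:toric-quads-gen-ker} to the purely lattice-theoretic statement that $\operatorname{span}_\zz(\cb_n)=\ker_\zz(A_n)$ for $n\ge 7$; the ideal identity $I_{A_n}=\langle z^{u^+}-z^{u^-}:u\in\cb_n\rangle:\prod_{\sigma}z_\sigma^\infty$ then follows verbatim from \cite[Lemma 12.2]{sturmfels1996groebner} quoted above. The inclusion $\operatorname{span}_\zz(\cb_n)\subseteq\ker_\zz(A_n)$ is exactly \cref{thm:toricquadratics}, so only the reverse inclusion is at stake. Since $\ker_\zz(A_n)$ is the kernel of an integer matrix, it equals $\ker_\qq(A_n)\cap\zz^{\Sigma_n}$, and it therefore suffices to exhibit inside $\cb_n$ a set $U$ of exactly $r:=(n-2)!-\binom{n-1}{2}+1=\operatorname{corank}(A_n)$ binomials whose $(n-2)!\times r$ matrix of exponent differences has a maximal minor equal to $\pm1$. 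Such a $U$ is automatically $\qq$-linearly independent, so $\operatorname{span}_\qq(U)=\ker_\qq(A_n)$ for dimension reasons, and its $\zz$-span is primitive, whence $\operatorname{span}_\zz(U)=\ker_\qq(A_n)\cap\zz^{\Sigma_n}=\ker_\zz(A_n)$.

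I would build $U$ by induction on $n$, using the identification of $\Sigma_n$ with the set of Hamiltonian cycles of $K_n$ through the edge $\{1,2\}$ (the cycle $1\,2\,a_3\cdots a_n$, oriented to begin $1\to2$, corresponding to $\sigma=12a_3\cdots a_n$), under which $A_n$ is the restriction to those cycles of the edge--cycle incidence matrix of $K_n$; the base cases $n=7,8,9$ are the computations referenced after the conjecture. The backbone of the induction is the injection $\iota\colon\Sigma_{n-1}\hookrightarrow\Sigma_n$ appending $n$ at the end, $\sigma\mapsto\sigma n$: a direct check with the adjacency description of $A_n$ shows that the induced coordinate inclusion $\zz^{\Sigma_{n-1}}\hookrightarrow\zz^{\Sigma_n}$ carries $\ker_\zz(A_{n-1})$ into $\ker_\zz(A_n)$ and in fact identifies it with $\ker_\zz(A_n)\cap\zz^{\iota(\Sigma_{n-1})}$, which is a direct summand of $\ker_\zz(A_n)$ because the quotient embeds into the free module $\zz^{\Sigma_n\setminus\iota(\Sigma_{n-1})}$. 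Moreover $\iota$ maps each member of $\cb_{n-1}$ to a member of $\cb_n$ (one enlarges the last spectator block by the new letter), so by the inductive hypothesis $\iota(\ker_\zz(A_{n-1}))\subseteq\operatorname{span}_\zz(\cb_n)$. Writing $\ker_\zz(A_n)=\iota(\ker_\zz(A_{n-1}))\oplus Q$ with $Q$ free of rank $(n-3)(n-3)!-(n-2)$, the first summand already lies in $\operatorname{span}_\zz(\cb_n)$, so what remains is to choose, among the binomials of $\cb_n$ in which the new letter $n$ occupies one of the distinguished positions $i,j,k,l$ or lies inside $\beta$, a subfamily $U''$ whose images generate $Q$.

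The crux is then to make this choice so that the combined matrix of $U=\iota(U_{n-1})\sqcup U''$ is unimodular. The natural device is to fix a linear order on $\Sigma_n$ -- say lexicographic order of one-line notation refined by the position of $n$ -- and to check that, after normalising signs, each binomial of both the lifted family and the new family carries a distinguished ``leading'' permutation which is strictly larger than the other permutations occurring in the same binomial and which is not the leading permutation of any other member of $U$; one then reads off a triangular $r\times r$ submatrix with $\pm1$ diagonal. Verifying this uses the explicit shape of the two families in \cref{thm:toricquadratics} -- the quadratic swap $z_{12\alpha ijkl\gamma}z_{12\alpha jlik\gamma}-z_{12\alpha ijlk\gamma}z_{12\alpha jkil\gamma}$ and its $\beta\ne\emptyset$ variant -- together with the count that the leading permutations so produced must be exactly $r$ in number, i.e. must exhaust all Hamiltonian cycles outside a spanning set of ``fundamental'' ones.

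The step I expect to be the main obstacle is precisely this triangularity/unimodularity bookkeeping: one must simultaneously pin down an ordering of the cycles and a leading-term assignment so that the leading terms of the lifted and new families are pairwise distinct, collectively exhaust the correct set of cycles, and never combine so as to spoil a maximal minor -- equivalently, one must exhibit an explicit triangular (hence, up to saturation by the coordinates, minimal) set of relations for the Hamiltonian-cycle toric variety $T_n$ and prove that the quadratic families of \cref{thm:toricquadratics} already contain one. The rank part of the count should be a routine induction once the leading-term assignment is fixed; the global coherence of that assignment -- confirmed for $n\le 9$ by computation -- is the genuinely delicate point, and a naive lexicographic order may well have to be replaced by a more carefully engineered one.
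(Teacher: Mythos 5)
This statement is a \emph{conjecture} in the paper: the authors do not prove it for general $n$, and the only evidence they offer is the subsequent proposition, which verifies the cases $n=7,8,9$ by explicit computation in \texttt{Macaulay2}. So there is no proof in the paper to compare your argument against, and the relevant question is whether your proposal actually closes the conjecture. It does not. The reductions you set up are sound and I could verify them: the ideal identity does follow from \cite[Lemma 12.2]{sturmfels1996groebner} once the lattice statement is known; the inclusion $\iota\colon\Sigma_{n-1}\hookrightarrow\Sigma_n$ appending $n$ does carry $\ker_\zz(A_{n-1})$ isomorphically onto $\ker_\zz(A_n)\cap\zz^{\iota(\Sigma_{n-1})}$ (the new rows $\{a,n\}$ impose exactly the old rows $\{a,1\}$, and $\{n,1\}$ imposes homogeneity, which is already row $\{1,2\}$); the quotient is free, so the splitting $\ker_\zz(A_n)=\iota(\ker_\zz(A_{n-1}))\oplus Q$ with $\rank Q=(n-3)(n-3)!-(n-2)$ is legitimate; and $\iota$ does send $\cb_{n-1}$ into $\cb_n$ by absorbing $n$ into the suffix $\gamma$. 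But all of this only relocates the conjecture: the entire content now sits in the construction of the family $U''\subseteq\cb_n$ generating $Q$ and the triangularity/unimodularity bookkeeping, which you explicitly leave open. As written, the proposal proves nothing beyond what the paper already establishes computationally.

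Two further cautions about the intended endgame. First, demanding an $r$-element subset $U\subseteq\cb_n$ whose matrix has a maximal minor equal to $\pm1$ is a priori \emph{stronger} than the conjecture: a generating set of a free $\zz$-module need not contain a basis (e.g.\ $\{2,3\}$ generates $\zz$), so even if $\cb_n$ generates $\ker_\zz(A_n)$ your strategy could fail. The computations in the paper's table are at least consistent with your stronger claim, since for $n=7,8,9$ the minimal generating sets of $\ker_\zz(A_n)$ have exactly $(n-2)!-\binom{n-1}{2}+1$ elements, i.e.\ are bases. Second, the induction cannot be purely formal in $Q$: the elements of $\cb_n$ genuinely involving the letter $n$ in a distinguished position are not supported on $\zz^{\Sigma_n\setminus\iota(\Sigma_{n-1})}$ alone, so "generating $Q$" must be interpreted as generating the quotient $\ker_\zz(A_n)/\operatorname{span}_\zz(\iota(\cb_{n-1}))$, which requires the inductive hypothesis in full. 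Until the leading-term assignment is made explicit and its coherence proven, this remains a plausible programme rather than a proof.
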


Observe that to prove the above conjecture, it suffices to show that $\dim(\mathrm{span}_\zz(\cb_n)) = \dim(\ker_\zz(A_n)) = (n-2)! - \binom{n-1}{2} + 1$ since $\cb_n \subseteq \ker_\zz(A_n)$ by construction. However, the cardinality of the set $\cb_n$ is much larger than $(n-2)! - \binom{n-1}{2} + 1$ and thus its elements will not be a basis for $\ker_\zz(A_n)$. 

\begin{proposition}
\cref{conj:toric-quads-gen-ker} holds for $n = 7, 8, 9$. 
\end{proposition}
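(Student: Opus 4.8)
The plan is to reduce \cref{conj:toric-quads-gen-ker} for each fixed $n \in \{7, 8, 9\}$ to a single finite, exact linear-algebra computation and then to carry that computation out. By the remark immediately following the conjecture it is enough to verify that $\dim\big(\mathrm{span}_\zz(\cb_n)\big) = \dim\big(\ker_\zz(A_n)\big)$, and the right-hand side equals $(n-2)! - \binom{n-1}{2} + 1$ by \cref{prop:toricdim}; moreover $\cb_n \subseteq \ker_\zz(A_n)$ is already known from \cref{thm:toricquadratics}. So the whole task is: assemble the finite list $\cb_n$ of binomials explicitly, form the integer matrix $M_n$ whose rows are their exponent-difference vectors $u^+ - u^-$, and confirm that $\rank_\qq M_n = (n-2)! - \binom{n-1}{2} + 1$.

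Concretely, I would first build the adjacency matrix $A_n \in \zz^{\binom{n}{2} \times (n-2)!}$ directly from its combinatorial definition, and then enumerate $\cb_n$ exactly as prescribed in \cref{thm:toricquadratics}: iterate over the choices of four distinct letters $i, j, k, l \in \{3, \dots, n\}$, over the ordered set partitions $\Pi_1 \mid \Pi_2 \mid \Pi_3$ of $[n] \setminus \{1, 2, i, j, k, l\}$ (with $\Pi_2 \neq \emptyset$ for the second family), and over the block orderings $\alpha, \beta, \gamma$, emitting for each of the two binomial types the vector $u^+ - u^- \in \zz^{(n-2)!}$, which is a $\pm 1$ vector supported on exactly four of the coordinates indexed by $\Sigma_n$. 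Stacking these vectors gives a very sparse integer matrix $M_n$, all of whose rows lie in $\ker_\zz(A_n)$ by \cref{thm:toricquadratics}, so it only remains to compute $\rank_\qq M_n$ and compare it to the known value $\dim \ker_\zz(A_n)$. If one wants in addition to certify the stronger assertion that $\cb_n$ \emph{generates} the lattice $\ker_\zz(A_n)$ --- rather than merely the saturated-ideal identity of \cref{conj:toric-quads-gen-ker} --- one computes the Smith normal form of $M_n$ and checks that all of its nonzero invariant factors equal $1$: a saturated, full-rank sublattice of $\zz^{(n-2)!}$ contained in the saturated lattice $\ker_\zz(A_n)$ of the same rank must coincide with it.

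The computation is genuinely finite and for $n = 7$ is tiny; the only real obstacle is scale at $n = 9$, where $(n-2)! = 5040$ and $\cb_9$ has on the order of $10^4$--$10^5$ elements, so $M_9$ is a large integer matrix (albeit sparse, with entries in $\{0, \pm 1\}$). To keep the linear algebra fast and, at the same time, rigorous, I would compute $\rank_{\mathbb{F}_p} M_n$ over a suitable prime $p$: an $\mathbb{F}_p$-independent set of rows is automatically $\qq$-independent (clear denominators in a putative $\qq$-relation and reduce mod $p$), so once $\rank_{\mathbb{F}_p} M_n$ reaches the a priori upper bound $\dim \ker_\zz(A_n)$, the chain $\rank_{\mathbb{F}_p} M_n \leq \rank_\qq M_n \leq \dim \ker_\zz(A_n)$ forces equality throughout. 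Only then, if the lattice statement is desired, would I run the exact integer Smith normal form, exploiting sparsity and, if necessary, first thinning $M_n$ to an incrementally chosen spanning set of rows. The remaining hazard is purely one of bookkeeping, and it is the place where a mistake would go unnoticed: faithfully translating \cref{thm:toricquadratics} into support vectors (the positions of the fixed letters $1, 2$, the cyclic-versus-acyclic adjacency conventions, the condition $\Pi_2 \neq \emptyset$). I would guard against this by independently re-checking a random sample of the generated binomials against the multiset-of-adjacencies criterion of \cref{prop:toric-gens-via-adjs}.
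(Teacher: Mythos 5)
Your proposal is correct and matches the paper's approach: the paper proves this proposition by exactly the finite computation you describe (assembling $\cb_n$, comparing the rank of its span to $\dim\ker_\zz(A_n) = (n-2)!-\binom{n-1}{2}+1$), carried out in \texttt{Macaulay2}. Your additional remarks on certifying the rank via reduction mod $p$ and on the Smith normal form being needed only for the stronger lattice-generation claim are sound but go beyond what the paper records.
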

\begin{proof}
We do this by explicit computation in $\texttt{Macaulay2}$ and the corresponding code can be found in the file \texttt{ParkeTaylorToric.m2} \cite{zenodo}. 
\end{proof}

\begin{remark}
We note that for $n = 5, 6$, the $\ker_\zz(A)$ cannot be generated using only quadratics. In the case of $n = 5$, we saw in \cref{example:toric-5} that $\ker_\zz(A_5)$, and thus $I_{A_5}$ are generated by a single cubic relation. In the case of $n = 6$, the set of all quadratic generators of $I_A$ only corresponds to a 14-dimensional subspace of $\ker_\zz(A_6)$ which has dimension $15 = 24 - (\binom{n-1}{2} - 1)$ so unfortunately $\ker_\zz(A_6)$ cannot be generated by only quadratics. 
\end{remark}

We end this section with \cref{table:toric-gens} which summarizes the number of generators of both $I_{A_n}$ and $\ker_\zz(A_n)$ for $n \leq 7$. In the next section we will show that the binomials in $\mathcal{B}_n$, together with \emph{lifts} of the polynomials which vanish on $\mathcal{M}_{0, n}$, suffice to generate the defining relations of the open Parke--Taylor variety $\mathrm{PT}^\circ_n$.

\begin{table}
\centering
\begin{tabular}{|c|c|c|c|c|c|c|}
\cline{2-7}
\multicolumn{1}{c|}{} & \backslashbox{n}{Degree} & 2 & 3 & 4 & 5 & 6 \\
\hline
\hline
\multirow{4}{4em}{$I_{A_n}$} & 5 & 0  & 1   & 0 & 0 & 0 \\
& 6 & 24 & 164 & 6 & 0 & 0 \\
& 7 &  1530  & 16410    & 3090  & 372 &  0 \\
\hline
\hline
\multirow{4}{4em}{$\ker_\zz(A_n)$} & 5 & 0  & 1   & 0 & 0 & 0 \\
& 6 &  14    & 1 & 0 & 0 & 0 \\
& 7 &  106   & 0 & 0 & 0 & 0 \\
& 8 &  700   & 0 & 0 & 0 & 0 \\
& 9 &  5013  & 0 & 0 & 0 & 0 \\
\hline
\end{tabular}
\caption{The number minimal generators for both the ideal $I_{A_n}$ and $\ker_\zz(A_n)$ in each degree for small values of $n$. Note that there are many possible minimal generating sets of $\ker_\zz(A_n)$ which may yield generators of different degrees. This table presents the degrees of the minimal basis described in \cref{thm:toricquadratics}.}
\label{table:toric-gens}
\end{table}

\section{The ideal of a Parke--Taylor variety} 
\label{sec:idealPT}
With the binomial Parke--Taylor relations understood, we are now ready to compute the whole ideal $\mathcal{I}(\mathrm{PT}_n)$.
We start by introducing some notation. Let $T_n$ be the toric variety defined in \cref{sec:toric} and write $\mathcal{I}(T_n)$ for its ideal in $\mathbb{C}[z_\sigma\ |\ \sigma\in S_n]$ and $\tilde{\mathcal{I}}(T_n)$ for the ideal in the Laurent ring $\mathbb{C}[z_\sigma^\pm\ |\ \sigma\in S_n]$ which is minimally generated by a set of binomials which correspond to a basis for the integer kernel $\ker_\zz(A_n)$. 
Furthermore, for an ideal $I$ in the quotient ring $\mathbb{C}[z_\sigma^\pm]/\tilde{\mathcal{I}}(T_n)$ we write $\mathcal{L}(I)$ for the ideal in $\mathbb{C}[z_\sigma^\pm]$ generated by preimages of the elements of $I$ under the projection $\mathbb{C}[z_\sigma^\pm]\to \mathbb{C}[z_\sigma^\pm]/\tilde{\mathcal{I}}(T_n)$.
Note that in what follows we will be interested in an ideal of the form $\mathcal{L}(I)+\tilde{\mathcal{I}}(T_n)$.
For this reason it suffices to only take a single preimage of each element in $I$ to construct $\mathcal{L}(I)$: the ideal $\mathcal{L}(I)+\tilde{\mathcal{I}}(T_n)$ does not depend on this choice, since the difference between any two preimages of the same element in $I$ is an element of $\tilde{\mathcal{I}}(T_n)$.
Now, let $W$ be the affine space defined in the proof of Proposition \ref{prop:toricdim} 
It follows from the proof of Proposition \ref{prop:toricdim} that the open varieties $(\mathbb{C}^*)^{(n-2)!-1}\cap T_n$ and $(\mathbb{C}^*)^{\binom{n-1}{2}-1}\subset W$ are isomorphic via a map which we denote by $\psi$. 
This induces the ring isomorphism $\psi^*:\mathbb{C}[p_{ij}^\pm\ | 1\leq i<j<n, (i,j)\neq(1,2)] \cong \mathbb{C}[z_\sigma^\pm\ |\ \sigma\in S_n]/\tilde{\mathcal{I}}(T_n)$.
Finally, we denote the Zariski closure of $\mathcal{M}_{0,n}\subset W$ by $V$.
With all this notation introduced, we are ready to formulate our main result in this section.

\begin{theorem} \label{thm:fullideal}
   For the open Parke--Taylor variety we have $\mathcal{I}(\mathrm{PT}_n^\circ)=\tilde{\mathcal{I}}(T_n) +\mathcal{L}(\psi^*(\tilde{\mathcal{I}}(V)))$. For the closed Parke--Taylor variety we have $\mathcal{I}(\mathrm{PT}_n):=\mathrm{num}(\mathcal{I}(PT_n^\circ)):(\prod z_\sigma)^\infty$, where $\mathrm{num}(\mathcal{I}(PT_n^\circ))$ is the ideal in the polynomial ring $\mathbb{C}[z_\sigma]$ generated by the numerators of all elements of $\mathcal{I}(PT_n^\circ)$. 
\end{theorem}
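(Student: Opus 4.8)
The plan is to trace through the factorization of the parametrization $\varphi_n$ established in the previous sections and keep careful track of what happens on the torus. Recall from \cref{rem:restrict-to-m0n} that $\varphi_n$ may be taken to have domain $\cm_{0,n}$, which by \cref{prop:toricdim} sits inside the affine chart $W \subset \pp^{\binom{n}{2}-1}$. On the torus $(\cc^\ast)^{\binom{n-1}{2}-1} \subset W$ the map $\tilde\varphi_n$ restricts to an isomorphism onto $(\cc^\ast)^{(n-2)!-1} \cap T_n$, giving the ring isomorphism $\psi^\ast$. Thus $\pt_n^\circ = \varphi_n(\cm_{0,n}) = \tilde\varphi_n\big(\overline{\cm_{0,n}}\cap (\cc^\ast)^{\binom{n-1}{2}-1}\big)$, and since $\tilde\varphi_n|_{\text{torus}} = \psi$ is an isomorphism of tori carrying $\overline{\cm_{0,n}}\cap(\cc^\ast)^{\cdots} = V\cap(\cc^\ast)^{\cdots}$ onto $\pt_n^\circ \cap (\cc^\ast)^{(n-2)!-1}$, we get that $\pt_n^\circ$ is the image under $\psi$ of $V$ inside the torus. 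At the level of ideals in the Laurent ring $\cc[z_\sigma^\pm]$, the coordinate ring of $\pt_n^\circ$ viewed inside $(\cc^\ast)^{(n-2)!-1}$ is $\cc[z_\sigma^\pm]/\mathcal{I}(\pt_n^\circ)$; but the coordinate ring of $(\cc^\ast)^{(n-2)!-1}\cap T_n$ is $\cc[z_\sigma^\pm]/\tilde{\mathcal{I}}(T_n)$ (here one uses \cite[Lemma 12.2]{sturmfels1996groebner}, quoted above, to the effect that a kernel basis cuts out $T_n$ correctly on the torus), and under the identification $\psi^\ast$ this ring is $\cc[p_{ij}^\pm]$. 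The subvariety $V\cap(\cc^\ast)$ corresponds to the ideal $\tilde{\mathcal{I}}(V) \subset \cc[p_{ij}^\pm]$, hence to the ideal $\psi^\ast(\tilde{\mathcal{I}}(V))$ in $\cc[z_\sigma^\pm]/\tilde{\mathcal{I}}(T_n)$, and pulling this back to $\cc[z_\sigma^\pm]$ via $\mathcal{L}$ and adding $\tilde{\mathcal{I}}(T_n)$ itself gives exactly the ideal of $\pt_n^\circ$. This establishes the first equality $\mathcal{I}(\pt_n^\circ) = \tilde{\mathcal{I}}(T_n) + \mathcal{L}(\psi^\ast(\tilde{\mathcal{I}}(V)))$.

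For the second statement, the point is simply that $\pt_n$ is by definition the Zariski closure in $\pp^{(n-2)!-1}$ of $\pt_n^\circ$, and $\pt_n^\circ = \pt_n\cap(\cc^\ast)^{(n-2)!-1}$ by \cref{thm:PTM0n} (the image of $\varphi_n$ lands in the torus since Parke--Taylor functions are nonzero on their domain, and it is Zariski dense in $\pt_n$). Passing from an ideal in the Laurent ring $\cc[z_\sigma^\pm]$ to the ideal of the closure of the corresponding subvariety of the torus inside $\pp^{(n-2)!-1}$ is exactly: clear denominators (take numerators, i.e. $\mathrm{num}(\mathcal{I}(\pt_n^\circ))$), then saturate by the product of the coordinates $\prod_\sigma z_\sigma$ to discard the spurious components supported on the coordinate hyperplanes that clearing denominators may introduce. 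This gives $\mathcal{I}(\pt_n) = \mathrm{num}(\mathcal{I}(\pt_n^\circ)):\big(\prod z_\sigma\big)^\infty$, which is the definition asserted. (One should note this formula is compatible with homogeneity: $\pt_n$ is a projective variety, and the Parke--Taylor monomials all have the same total degree in the $p_{ij}$, so the numerator ideal is indeed homogeneous.)

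The main obstacle is the first equality, and specifically the bookkeeping between the three rings $\cc[p_{ij}^\pm]$, $\cc[z_\sigma^\pm]/\tilde{\mathcal{I}}(T_n)$, and $\cc[z_\sigma^\pm]$, together with the claim that the $\mathcal{L}(-) + \tilde{\mathcal{I}}(T_n)$ construction does not depend on the choice of preimages — this last point was already argued in the paragraph preceding the theorem and I would just cite it. The genuinely delicate step is making precise that $\tilde{\mathcal{I}}(T_n)$ (generated by a $\zz$-kernel basis, not by a full generating set of the toric ideal) is the \emph{correct} ideal for $T_n\cap(\cc^\ast)^{(n-2)!-1}$ rather than for all of $T_n$; this is where one invokes the saturation statement \cite[Lemma 12.2]{sturmfels1996groebner} and the observation that on the torus, saturating by the variables changes nothing. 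Once the restriction to the torus is in force throughout, everything else is a formal pullback/pushforward argument along the isomorphism $\psi$ of algebraic tori, and the closure step in the second statement is the standard ``numerator plus saturate'' recipe.
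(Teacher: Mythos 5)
Your proposal is correct and takes essentially the same approach as the paper: the paper's (much terser) proof likewise reduces the first equality to the fact that $\psi$ restricts to an isomorphism between $\mathcal{M}_{0,n}$ and $\mathrm{PT}_n^\circ$ (Theorem \ref{thm:PTM0n}), so that the ideal of $\mathrm{PT}_n^\circ$ in $\mathbb{C}[z_\sigma^\pm]$ is the preimage of $\psi^*(\tilde{\mathcal{I}}(V))$ under the projection modulo $\tilde{\mathcal{I}}(T_n)$, and then obtains $\mathcal{I}(\mathrm{PT}_n)$ by the standard clear-denominators-and-saturate procedure. Your version merely spells out the bookkeeping among the three rings that the paper leaves implicit.
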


\begin{proof}
    The statement for $\pt^\circ_n$ follows from the fact that $\psi$ restricts to an isomorphism between $\mathcal{M}_{0,n}$ and $\mathrm{PT}^\circ_n$, as shown in Theorem \ref{thm:PTM0n}. The statement for $\pt_n$ variety is then just an application of the standard procedure to obtain the ideal of the closed variety from a dense subvariety contained in the torus: we clear the denominators in the Laurent polynomials and saturate the resulting ideal in the polynomial ring with the product of all variables. 
\end{proof}

The previous theorem essentially says that the ideal $\ci(\mathrm{PT}_n^\circ)$ is generated by the generators of the toric ideal $\tilde{\ci}(T_n)$ and of the ideal $\mathcal{L}(\psi^\ast(\tilde{\ci}(V)))$, the latter of which consists of the images of the generators of $\tilde{\ci}(V)$ under $\psi^\ast$. Moreover, since $\psi^\ast$ is an isomorphism, these images are unique modulo the toric ideal $\ci(T_n)$. Given our characterization of the generators of $\tilde{\ci}(T_n)$ from \cref{sec:toric}, the main remaining task is to describe the numerators of $\mathcal{L}(\psi^\ast(\tilde{\ci}(V)))$. The following corollary will allow us to present a simpler generating set of $\ci(\pt_n^\circ)$. 

\begin{corollary}\label{cor:just-lift-pluckers}
$\ci(\pt_n^\circ) = \tilde{\ci}(T_n) + \mathcal{L}(\psi^\ast(\tilde{\ci}(\gr(2,n)))$.
\end{corollary}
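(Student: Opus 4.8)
The plan is to deduce Corollary~\ref{cor:just-lift-pluckers} from Theorem~\ref{thm:fullideal} by replacing $\tilde{\ci}(V)$ with $\tilde{\ci}(\gr(2,n))$ in the formula $\ci(\mathrm{PT}_n^\circ)=\tilde{\ci}(T_n)+\mathcal{L}(\psi^*(\tilde{\ci}(V)))$. The key point is that the two ideals $\tilde{\ci}(V)$ and $\tilde{\ci}(\gr(2,n))$, while not equal, cut out the same subvariety of the torus after applying $\psi^*$ and adding $\tilde{\ci}(T_n)$. Recall from Remark~\ref{rem:restrict-to-m0n} and the proof of Proposition~\ref{prop:toricdim} that $V$ is the Zariski closure of $\cm_{0,n}$ inside the affine space $W\subset\pp^{\binom{n}{2}-1}$ cut out by $p_{12}=1$ and $p_{in}=1$ for $i\in[n-1]$, and that $\psi$ identifies $(\cc^*)^{\binom{n-1}{2}-1}\subset W$ with $(\cc^*)^{(n-2)!}\cap T_n$.

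**Main step.** First I would observe that $\cm_{0,n}=\gr(2,n)\cap W\cap(\cc^*)^{\binom{n}{2}}$ as a subset of $W$: a matrix representative in the normalized form of Remark~\ref{rem:restrict-to-m0n} lies in $\gr(2,n)$ automatically (it is a $2\times n$ matrix), the conditions $p_{12}=p_{in}=1$ are exactly the normalization cutting out $W$, and the torus condition $p_{ij}=p_i-p_j\neq 0$ is exactly the requirement that the marked points be distinct. Hence inside the torus $(\cc^*)^{\binom{n-1}{2}-1}\subset W$, the varieties $V$ and $\gr(2,n)\cap W$ coincide: $V\cap(\cc^*)^{\binom{n-1}{2}-1}=\gr(2,n)\cap W\cap(\cc^*)^{\binom{n-1}{2}-1}=\cm_{0,n}$. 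Passing to Laurent ideals, this gives $\tilde{\ci}(V)=\tilde{\ci}(\gr(2,n)\cap W)$ as ideals in $\cc[p_{ij}^\pm\mid 1\le i<j<n,(i,j)\neq(1,2)]$, since a Laurent ideal is determined by its vanishing locus in the torus and both ideals are radical (their varieties are reduced, $\cm_{0,n}$ being a smooth irreducible variety). Now the defining equations of $\gr(2,n)\cap W$ inside $W$ are obtained from the Plücker relations by substituting $p_{12}=1$, $p_{in}=1$; that is, $\tilde{\ci}(\gr(2,n)\cap W)$ is generated over the Laurent ring by these specialized Plücker relations, which are precisely the generators of $\tilde{\ci}(\gr(2,n))$ after the identification of coordinate rings induced by restricting to $W$. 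Therefore $\mathcal{L}(\psi^*(\tilde{\ci}(V)))=\mathcal{L}(\psi^*(\tilde{\ci}(\gr(2,n))))$ modulo $\tilde{\ci}(T_n)$, and substituting into Theorem~\ref{thm:fullideal} yields the claim.

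**Where the care is needed.** The main obstacle, and the place to be careful, is the bookkeeping of which polynomial ring each ideal lives in and how $\psi^*$ transports it. The subtlety is that $\tilde{\ci}(\gr(2,n))$ as written in the statement presumably refers to the full Plücker ideal in all $\binom{n}{2}$ Laurent variables, whereas $V$ lives in the smaller ring with only $\binom{n-1}{2}-1$ variables; one must check that specializing $p_{12},p_{in}$ to $1$ is exactly what $\psi^*$ does at the level of the affine chart, i.e. that the isomorphism $\psi^*$ from the proof of Proposition~\ref{prop:toricdim} is compatible with these normalizations. Since Theorem~\ref{thm:fullideal} already uses the ideal $\mathcal{L}(\psi^*(\tilde{\ci}(V)))$ with $V\subset W$, and the remark states that the restriction of $\varphi_n$ to $\cm_{0,n}$ in the normalized form has the same image, this compatibility is built into the construction; the corollary is then essentially the statement that it makes no difference whether one writes the Plücker relations before or after imposing the normalization $p_{12}=p_{in}=1$, because those monomials are units in the Laurent ring and hence invertible. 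I would also remark that a cleaner phrasing is to note $V=\gr(2,n)\cap W$ as schemes (not merely set-theoretically) since $W$ is a linear slice transverse to $\gr(2,n)$ in the relevant open locus, which upgrades the ideal equality to the Laurent ring without invoking radicality; either route completes the proof.
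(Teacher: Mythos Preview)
Your proof is correct and follows essentially the same route as the paper: both arguments reduce the corollary to Theorem~\ref{thm:fullideal} by observing that $V$ and $\gr(2,n)\cap W$ have the same intersection with the torus (namely $\cm_{0,n}$), so their Laurent ideals coincide and may be interchanged in the formula. The paper states this geometrically via the equality of images $\tilde{\varphi}_n(\gr(2,n))=\tilde{\varphi}_n(V)=\psi(V)=\pt_n^\circ$, whereas you spell out the algebraic side (equality of Laurent ideals via radicality, compatibility of $\psi^*$ with the normalization $p_{12}=p_{in}=1$); the substance is the same.
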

\begin{proof}
Note that the map $\tilde{\varphi}_n:\mathbb{P}^{\binom{n}{2}-1}\dashrightarrow \mathbb{P}^{(n-2)!-1}$ restricted to $\mathrm{Gr}(2,n)$ yields exactly the open Parke--Taylor variety $\mathrm{PT}^\circ_n$ as the image. That is, $\tilde{\varphi}_n(\mathrm{Gr}(2,n))=\tilde{\varphi}_n(V)=\psi(V)$. For this reason, to compute $\mathcal{L}(\psi^*(\tilde{\mathcal{I}}(V)))$ it is enough to compute the lifts of each Pl\"ucker relation defining $\mathrm{Gr}(2,n)$ instead of computing the lifts of the generators of $\mathcal{I}(V)$.
\end{proof}

\begin{remark} \label{rem:gr}
As we will see in the following proposition, the advantage of replacing $\mathcal{L}(\psi^\ast(\tilde{\ci}(V))$ with $\mathcal{L}(\psi^\ast(\tilde{\ci}(\gr(2,n)))$ is that the resulting lifts all have a very simple structure: just like the Pl\"ucker relations themselves, they are quadratic trinomials. Moreover, it suffices to lift the Pl\"ucker relations not involving the variable $p_{12}$ since it appears in each of the $(n-2)!$ Parke--Taylor functions and thus can be removed from the parametrization of $\pt_n$.
\end{remark}

The previous corollary establishes that we may replace the generating set of $\mathcal{L}(\psi^\ast(\tilde{\ci}(V)))$ in \cref{thm:fullideal} with that of $\mathcal{L}(\psi^\ast(\tilde{\ci}(\gr(2,n)))$. We now introduce another \emph{lifting} operation which allows us to give a succint combinatorial description of the images of the Pl\"ucker relations, which generate $\ci(\gr(2, n))$, under $\psi^\ast$. 

\begin{definition}
A polynomial $F \in \cc[z_\sigma ~:~ \sigma \in \Sigma_n]$ is called a \emph{lift} of a polynomial $f \in \cc[p_{ij} ~:~ 1 \leq i < j \leq n]$ under $\varphi_n^\ast$ if $\varphi^\ast(F) = m f $ and thus $\psi^\ast(mf) = F$. 
\end{definition}

Before giving an explicit description of the lifts of the Pl\"ucker relations, we first introduce a lemma which is analogous to \cref{lemma:lift-binomials} and can be used to lift Parke--Taylor relations which hold for small number of particles to those with an arbitrary number. 

\begin{lemma}
\label{lemma:small-plucker-to-big}
Let $f = \sum_{j = 1}^m c_j \prod_{\sigma \in A_j } z_\sigma \in \ci(\mathrm{PT}_n)$ where each $A_i$ is a multiset of permutations in $\Sigma_n$. If there exists an index $i \in \{2,\ldots, n\}$ such that $\{\sigma_i \sigma_{i+1} ~:~ \sigma \in A_j \} = \{\tau_i \tau_{i+1} ~:~ \tau \in A_\ell \}$ as multisets for all $j, l \in [m]$, then for any $k \in \zz_{>0}$, it holds that
\[
\sum \limits_{j = 1}^n c_j \prod_{\sigma \in A_j} z_{1 2 \cdots \sigma_i \delta \sigma_{i+1} \cdots \sigma_n} \in \ci(\mathrm{PT_{n+k}}),
\]
where $\delta = \delta_1 \cdots \delta_k$ is a permutation on the letters $n+1, \ldots, n+k$. 
\end{lemma}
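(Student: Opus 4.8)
The plan is to mirror the structure of the proof of \cref{lemma:lift-binomials}, but tracking the full rational function $\varphi_n^\ast(f)$ rather than just the multiset of adjacencies. The key point is that inserting the block $\delta = \delta_1\cdots\delta_k$ of new letters immediately after position $i$ in every permutation multiplies each monomial $\varphi_n^\ast\!\left(\prod_{\sigma\in A_j} z_\sigma\right)$ by a \emph{common} factor, so that the linear relation among the monomials is preserved. Concretely, for a single permutation $\sigma = 12\sigma_3\cdots\sigma_n \in \Sigma_n$, write $\sigma'$ for the permutation in $\Sigma_{n+k}$ obtained by inserting $\delta$ between $\sigma_i$ and $\sigma_{i+1}$. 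Then, comparing the defining products, one has
\[
\varphi_{n+k}^\ast(z_{\sigma'}) = \varphi_n^\ast(z_\sigma)\cdot \frac{p_{\sigma_i\,\delta_1}\,p_{\delta_k\,\sigma_{i+1}}}{p_{\sigma_i\,\sigma_{i+1}}}\cdot \prod_{t=1}^{k-1}\frac{1}{p_{\delta_t\,\delta_{t+1}}},
\]
since the adjacency $\sigma_i\sigma_{i+1}$ in $\sigma$ is replaced by the chain $\sigma_i\,\delta_1\,\delta_2\cdots\delta_k\,\sigma_{i+1}$ and all other adjacencies (including the cyclic one $\sigma_n\,1$) are unchanged. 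Thus
\[
\varphi_{n+k}^\ast\!\left(\prod_{\sigma\in A_j} z_{\sigma'}\right) = \left(\prod_{t=1}^{k-1}\frac{1}{p_{\delta_t\,\delta_{t+1}}}\right)\cdot\left(\prod_{\sigma\in A_j}\frac{p_{\sigma_i\,\delta_1}\,p_{\delta_k\,\sigma_{i+1}}}{p_{\sigma_i\,\sigma_{i+1}}}\right)\cdot\varphi_n^\ast\!\left(\prod_{\sigma\in A_j} z_\sigma\right).
\]

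The next step is to observe that the middle factor depends on $j$ only through the multiset $\{\sigma_i\sigma_{i+1} : \sigma \in A_j\}$ (together with the constant block $\prod_t p_{\delta_t\delta_{t+1}}^{-1}$). Indeed, using the hypothesis that this multiset is independent of $j$, call it $\{a_1, \ldots, a_m\}$ where each $a_\ell$ is an unordered pair $\{u_\ell, v_\ell\}$ (with $u_\ell$ the letter appearing in position $i$): then
\[
\prod_{\sigma\in A_j}\frac{p_{\sigma_i\,\delta_1}\,p_{\delta_k\,\sigma_{i+1}}}{p_{\sigma_i\,\sigma_{i+1}}} = \prod_{\ell=1}^{m}\frac{p_{u_\ell\,\delta_1}\,p_{\delta_k\,v_\ell}}{p_{u_\ell\,v_\ell}} =: g_\delta,
\]
which is the same rational function $g_\delta$ for every $j$. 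Here I should be slightly careful about one subtlety: the common-factor argument requires that $\delta_1$ is distinct from $\sigma_i$ and $\delta_k$ distinct from $\sigma_{i+1}$, which holds automatically since $\delta$ consists of the fresh letters $n+1,\dots,n+k$ while $\sigma_i, \sigma_{i+1} \in [n]$. Combining, we get
\[
\varphi_{n+k}^\ast\!\left(\sum_{j=1}^m c_j \prod_{\sigma\in A_j} z_{\sigma'}\right) = g_\delta\cdot\left(\prod_{t=1}^{k-1}\frac{1}{p_{\delta_t\,\delta_{t+1}}}\right)\cdot\varphi_n^\ast\!\left(\sum_{j=1}^m c_j\prod_{\sigma\in A_j} z_\sigma\right) = g_\delta\cdot\left(\prod_{t=1}^{k-1}\frac{1}{p_{\delta_t\,\delta_{t+1}}}\right)\cdot 0 = 0,
\]
where we used $f = \sum_j c_j\prod_{\sigma\in A_j} z_\sigma \in \ci(\mathrm{PT}_n)$, so $\varphi_n^\ast(f) = 0$. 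Since the pullback of the lifted polynomial under $\varphi_{n+k}^\ast$ vanishes identically on $\gr(2,n+k)$, the lifted polynomial lies in $\ker(\varphi_{n+k}^\ast) = \ci(\mathrm{PT}_{n+k})$, as required.

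The main obstacle — and the part that needs the most care in writing up — is the bookkeeping in the identity for $\varphi_{n+k}^\ast(z_{\sigma'})$: one must verify that inserting $\delta$ changes \emph{exactly} one adjacency of $\sigma$ (namely $\sigma_i\sigma_{i+1}$), introduces exactly the $k+1$ new adjacencies $\sigma_i\delta_1, \delta_1\delta_2, \ldots, \delta_{k-1}\delta_k, \delta_k\sigma_{i+1}$, and leaves the cyclic adjacency $\sigma_n\sigma_1 = \sigma_n 1$ intact (this uses $i \geq 2$, so the insertion is not at the cyclic wrap-around, and $\sigma_1 = 1, \sigma_2 = 2$ are untouched since $i \geq 2$ means we never insert before position $2$; note $i = 2$ is allowed and inserts between $\sigma_2 = 2$ and $\sigma_3$). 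Everything else is a routine factorization argument, and the structure is identical to \cref{lemma:lift-binomials} with ``multiset of adjacencies'' upgraded to ``rational function'', so I would present it compactly, emphasizing only the common-factor cancellation.
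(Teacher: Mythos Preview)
Your approach is correct and is exactly the natural adaptation of \cref{lemma:lift-binomials} that the paper intends (the paper omits the proof, citing this similarity): track the rational function $\varphi_n^\ast$ rather than just the adjacency multiset, and observe that inserting $\delta$ multiplies each monomial $\varphi_n^\ast\bigl(\prod_{\sigma\in A_j} z_\sigma\bigr)$ by a factor depending only on the common multiset $\{(\sigma_i,\sigma_{i+1}):\sigma\in A_j\}$, so that $\varphi_{n+k}^\ast$ of the lifted polynomial is this common factor times $\varphi_n^\ast(f)=0$. One cosmetic slip: your displayed factor $\tfrac{p_{\sigma_i\delta_1}\,p_{\delta_k\sigma_{i+1}}}{p_{\sigma_i\sigma_{i+1}}}$ is inverted---since $f_\sigma=\prod 1/p_{\sigma_t\sigma_{t+1}}$, the removed adjacency $p_{\sigma_i\sigma_{i+1}}$ belongs in the numerator and the new adjacencies in the denominator (compare the identity in the proof of \cref{cor:pt-recursive-shuffle})---but this has no effect on the argument, as all you use is that the factor is common to every~$j$.
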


The proof of this lemma is quite similar to that of \cref{lemma:lift-binomials} so we omit it. We now have all of the tools necessary to describe the lifts of Pl\"ucker relations. 

\begin{proposition}
\label{prop:plucker-lifts}
Let $(i,j,k,l) \in [n]$ be a tuple of distinct integers such that $|\{i,j,k,l\} \cap\{1, 2\}| \leq 1$ and $i < j <k < l$ and let $\alpha, \beta$ be permutations whose letters form a partition of $[n] \setminus (\{i,j,k,l\} \cup \{1, 2\})$. Then the following polynomial $F_{ijkl}$ is a lift of the Pl\"ucker relation $p_{ij}p_{kl} - p_{ik}p_{jl} + p_{il}p_{kj}$:
\[
F_{ijkl, \alpha, \beta} = \begin{cases}
z_{12\alpha jlk}z_{12\alpha kjl} + 
z_{12\alpha jkl}z_{12\alpha klj} +
z_{12\alpha jlk}z_{12\alpha klj}, ~~ i = 1 \\
z_{12 klj \alpha}z_{12 ljk \alpha} + 
z_{12 lkj \alpha}z_{12 jlk \alpha} +
z_{12 klj \alpha}z_{12 jlk \alpha}, ~~ i = 2 \\
z_{12\alpha jlki \beta}z_{12\alpha kjli \beta} + 
z_{12\alpha jkli \beta}z_{12\alpha klji \beta} +
z_{12\alpha jlki \beta}z_{12\alpha klji \beta}, ~~ i \geq 3.
\end{cases}
\]
and thus $F_{ijkl, \alpha, \beta} \in L(\psi^\ast(\tilde{\ci}(V)))$. Furthermore, the set
\[
\mathcal{F} = \{F_{ijkl, \alpha, \beta} ~:~ i < j <k < l\}
\]
consisting of a single lift of each Pl\"ucker relation generates the ideal $L(\psi^\ast(\tilde{\ci}(V)))$. 
\end{proposition}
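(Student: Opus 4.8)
The plan is to verify the claimed lift formula for a single Plücker relation by direct computation, and then to bootstrap from small cases using \cref{lemma:small-plucker-to-big}. First I would treat the base case $i \geq 3$ with $\alpha = \beta = \emptyset$, i.e. $n = 6$ and $\{i,j,k,l\} = \{3,4,5,6\}$. Here one computes $\varphi_6^\ast(F_{3456,\emptyset,\emptyset})$ directly using the parametrization of \cref{rem:restrict-to-m0n}: each $z_\sigma$ pulls back to $\prod 1/(p_{\sigma_a}-p_{\sigma_{a+1}})$, and after clearing the common denominator the three terms of $F_{3456,\emptyset,\emptyset}$ should collapse (via the identity $(p_k - p_j) = (p_k - p_i) - (p_j - p_i)$ type rewriting, equivalently the $2\times 2$ Plücker relation itself) to $m \cdot (p_{34}p_{56} - p_{35}p_{46} + p_{36}p_{45})$ for an explicit monomial $m$ in the $p_{ab}$'s. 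This is the ``heart'' computation: it shows $F_{3456,\emptyset,\emptyset}$ is genuinely a lift, hence lies in $\mathcal{L}(\psi^\ast(\tilde{\ci}(V)))$ since $\psi^\ast$ sends $m f$ to $F$ and $f$ vanishes on $\gr(2,n)\supseteq V$. The cases $i = 1$ and $i = 2$ are entirely analogous base-case computations at $n=5$ and $n=6$ respectively — the slightly different shape of $F$ in those cases just reflects that one of $\{1,2\}$ participates in the Plücker quadruple and so sits in a fixed position of the permutation.

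Next I would obtain the general $F_{ijkl,\alpha,\beta}$ from these base cases by the lifting lemma. For $i\geq 3$, starting from $F_{3456,\emptyset,\emptyset}\in\ci(\mathrm{PT}_6)$, observe that at the cyclic position just before the block $jlki$ (resp.\ $kjli$, etc.) all three monomials share the same adjacency — this is exactly the hypothesis of \cref{lemma:small-plucker-to-big} — so I may insert the permutation $\alpha$ there, and similarly insert $\beta$ after the letter $i$; relabeling the four ``active'' letters $3,4,5,6$ to $i<j<k<l$ and the inserted blocks to the appropriate letters of $[n]$ produces $F_{ijkl,\alpha,\beta}$ and shows it lies in $\ci(\mathrm{PT}_n)$, hence (being a lift) in $\mathcal{L}(\psi^\ast(\tilde{\ci}(V)))$. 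The cases $i=1,2$ are handled the same way with the appropriate base case. One must check carefully that the adjacency-matching hypothesis of the lemma holds at the two insertion sites; this is the routine-but-must-be-done combinatorial bookkeeping.

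Finally, for the generation statement, by \cref{cor:just-lift-pluckers} it suffices to show that $\mathcal{F}$, together with $\tilde{\ci}(T_n)$, generates $\mathcal{L}(\psi^\ast(\tilde{\ci}(\gr(2,n))))$ modulo $\tilde{\ci}(T_n)$ — equivalently, that the images of the $F_{ijkl,\alpha,\beta}$ span the image of $\tilde{\ci}(\gr(2,n))$ under $\psi^\ast$ in the quotient ring $\cc[z_\sigma^\pm]/\tilde{\ci}(T_n)$. Since $\psi^\ast$ is a ring isomorphism (established before \cref{thm:fullideal}) and $\tilde{\ci}(\gr(2,n))$ in the relevant Laurent ring is generated by the Plücker trinomials $p_{ij}p_{kl}-p_{ik}p_{jl}+p_{il}p_{kj}$ (for all quadruples, and by \cref{rem:gr} we may drop those involving $p_{12}$), it is enough to note that $\psi^\ast$ of each such generator equals one of the $F_{ijkl,\alpha,\beta}$ up to a unit (monomial) factor and an element of $\tilde{\ci}(T_n)$ — which is precisely what the lift property $\psi^\ast(mf) = F$ gives, after dividing by the unit $\psi^\ast(m)$ in the Laurent ring. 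The independence of the particular choice of $\alpha,\beta$ (and hence the well-definedness of picking one lift per Plücker relation) follows because any two such lifts differ by an element of $\tilde{\ci}(T_n)$, exactly as in the discussion preceding \cref{thm:fullideal}. I expect the main obstacle to be the base-case computation in the first paragraph: carrying out the denominator-clearing and confirming the three-term cancellation produces exactly the Plücker trinomial times a monomial, with the correct signs, is the one place where a genuine (if short) calculation is unavoidable.
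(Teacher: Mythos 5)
Your proposal is correct and follows essentially the same route as the paper: an explicit base-case computation showing $\varphi_6^\ast(F_{ijkl,\emptyset,\emptyset})$ is a monomial multiple of the Pl\"ucker trinomial, insertion of $\alpha$ and $\beta$ via \cref{lemma:small-plucker-to-big} after checking the adjacency multisets at the two insertion positions, and the generation claim via \cref{thm:fullideal} and \cref{cor:just-lift-pluckers}. (Only a trivial slip: the $i=2$ base case also lives at $n=5$, not $n=6$, since with $\alpha=\emptyset$ the permutations involve only the letters $1,2,j,k,l$.)
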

\begin{proof}
We prove the first claim that $F_{ijkl, \alpha, \beta}$ for the case that $i \geq 3$ since the other two cases are similar. Observe that, $F_{ijkl, \emptyset, \emptyset}$ is a polynomial defined for $n = 6$ which satisfies the conditions of \cref{lemma:small-plucker-to-big} at positions $2$ and $6$. More specifically, at 
positions $2$ and $6$, the multiset of adjacencies in each monomial is $\{2k, 2j\}$ and $\{1i^2\}$ respectively. Thus if $F_{ijkl, \emptyset, \emptyset} \in L(\psi^\ast(\tilde{\ci}(V)))$, then $F_{ijkl, \alpha, \beta} \in L(\psi^\ast(\tilde{\ci}(V)))$ for any choice of $\alpha, \beta$. Explicit computation then yields that $\tilde{\varphi}_n(F_{ijkl, \emptyset, \emptyset}) = m(p_{ij}p_{kl} - p_{ik}p_{jl} + p_{il}p_{kj})$ as desired. 

For the second claim, recall that by \cref{thm:fullideal} the ideal $\mathcal{L}(\psi^\ast(\tilde{\ci}(V)))$ can be generated by a single image of each generator of $\tilde{\ci}(V)$ (or equivalently $\ci(\mathrm{Gr}(2,n))$) under $\psi^\ast$ and thus it suffices to choose a single valid $\alpha$ and $\beta$ for each tuple $(i,j,k,l)$. 
\end{proof}

We now carry out this construction explicitly for the case of $n = 5$ and $n = 6$. 

\begin{example}\label{example:fullideal5}
Let $n = 5$ and recall from \cref{thm:fullideal} that our generating set of $\ci(\mathrm{PT}_n^\circ)$ consists of two parts which are the generators of $\tilde{\ci}(T_n)$ which correspond to a basis for $\ker_\zz(A_n)$ and the lifts of the Pl\"ucker relations which generate $\ci(\gr(2, 5))$. In this case, 
\begin{align*}
\ci(\pt_5^\circ) = \langle f_1, f_2, f_3 \rangle =  \langle &z_{12354}z_{12435}z_{12543}-z_{12345}z_{12453}z_{12534} \\
&z_{12354}z_{12435}+z_{12345}z_{12453}+z_{12354}z_{12453},\\
&z_{12354}z_{12453}+z_{12453}z_{12534}+z_{12354}z_{12543} \rangle
\end{align*}
where the first generator $f_1$ of the above list comes from $\tilde{\ci(T_n)}$ and is the same polynomial which appears in \cref{example:toric-5}. The last two polynomials, $f_2$ and $f_3$, are the lifts of the Pl\"ucker relations $p_{13}p_{45} - p_{14}p_{35} + p_{15}p_{34}$ and $p_{23}p_{45} - p_{24}p_{35} + p_{25}p_{34}$ respectively. As stated in \cref{thm:fullideal}, we have $\ci(\pt_n) = \ci(\pt_n^\circ) : \prod_{\sigma} z_\sigma ^\infty$ which in this case yields the 5 quadratic generators we saw in \cref{ex:LC-to-PT-5}. That is, 
\begin{align*}
\ci(\pt_5) = 
\langle 
z_{12354}z_{12435}+z_{12345}z_{12453}+z_{12354}z_{12453}, z_{12345}z_{12534}+z_{12345}z_{12543}+z_{12354}z_{12543},\\
z_{12354}z_{12435}+z_{12345}z_{12534}+z_{12435}z_{12534}, z_{12345}z_{12453}+z_{12345}z_{12543}+z_{12435}z_{12543},\\
z_{12354}z_{12435}+z_{12345}z_{12534}+z_{12435}z_{12534} 
\rangle.
\end{align*}
Lastly, we note that the extra generators arising from saturation essentially belong to $\ci(\pt_n^\circ)$ already, however they are not minimal in $\cc[z_\sigma^\pm]$ but become minimal when passing to the polynomial ring $\cc[z_\sigma]$. For example, the first generator of $\ci(\pt_5)$ above can be written as
\[
g_1 = z_{12354}z_{12435}+z_{12345}z_{12453}+z_{12354}z_{12453} = \frac{1}{z_{12354}}f_1 + \frac{z_{12534}}{z_{12354}}f_2 \in \ci(\pt_n^\circ) \subseteq \cc[z_\sigma^\pm]
\]
and thus we see that while $g_1$ is a minimal generator of $\ci(\pt_n) \subset \cc[z_\sigma]$, it is not a minimal generator of $\ci(\pt_n^\circ) \subset \cc[z_\sigma^\pm]$.  
\end{example}

\begin{example} \label{example:fullideal6}
Let $n = 6$ and just as in the previous example, our generating set again consists of the binomials which generate $\tilde{\ci}(T_n)$ and the lifts of the Pl\"ucker relations which generate $\mathcal{L}(\psi^\ast(\ci(\gr(2,n)))$. The ideal $\tilde{\ci}(T_n)$ is generated by the 15 binomials appearing in \cref{example:toric-6}, which we denote by $\cb_6'$, that correspond to a basis for $\ker_\zz(A_6)$. Note that the set of quadratic generators $\cb_6$ given in \cref{thm:toricquadratics} is contained in this but for $n \leq 7$, these quadratics do not suffice to generate $\ker_\zz(A_n)$ and thus $\cb_6'$ also includes a cubic generator.  

The second part of our generating set of $\ci(\mathrm{PT}_n^\circ)$ is the generators 
$\mathcal{F}$ of $\mathcal{L}(\psi^\ast(\tilde{\ci}(V)))$ which are described in \cref{prop:plucker-lifts}. In this case, these polynomials are:

\begin{minipage}{0.3\textwidth}
\begin{align*}
p_{15}p_{34}-p_{14}p_{35}+p_{13}p_{45} \\
p_{25}p_{34}-p_{24}p_{35}+p_{23}p_{45}\\
p_{16}p_{34}-p_{14}p_{36}+p_{13}p_{46}\\
p_{26}p_{34}-p_{24}p_{36}+p_{23}p_{46}\\
p_{16}p_{35}-p_{15}p_{36}+p_{13}p_{56}\\
p_{26}p_{35}-p_{25}p_{36}+p_{23}p_{56}\\
p_{16}p_{45}-p_{15}p_{46}+p_{14}p_{56}\\
p_{26}p_{45}-p_{25}p_{46}+p_{24}p_{56}\\
p_{36}p_{45}-p_{35}p_{46}+p_{34}p_{56}
\end{align*}
\end{minipage}
\begin{minipage}{0.1\textwidth}
\hspace*{6pt}
$\xlongrightarrow[]{\psi^\ast}$
\end{minipage}  
\begin{minipage}{0.4\textwidth}
\begin{align*}
 z_{126354}z_{126435}+z_{126345}z_{126453}+z_{126354}z_{126453}\\
z_{123546}z_{124536}+z_{124536}z_{125346}+z_{123546}z_{125436}\\
z_{125364}z_{125436}+z_{125346}z_{125463}+z_{125364}z_{125463}\\
z_{123645}z_{124635}+z_{124635}z_{126345}+z_{123645}z_{126435}\\
z_{124365}z_{124536}+z_{124356}z_{124563}+z_{124365}z_{124563}\\
z_{123654}z_{125634}+z_{125634}z_{126354}+z_{123654}z_{126534}\\
z_{123465}z_{123546}+z_{123456}z_{123564}+z_{123465}z_{123564}\\
z_{124653}z_{125643}+z_{125643}z_{126453}+z_{124653}z_{126543}\\
z_{124653}z_{125463}+z_{124563}z_{125643}+z_{124653}z_{125643}
\end{align*}
\end{minipage}  
\vspace*{0.5cm}

So the ideal $\ci(\mathrm{PT}_n^\circ)$ is generated by the 9 quadratics above on the right and the 15 binomials which appear in \cref{example:toric-6}. The Parke--Taylor ideal $\ci(\mathrm{PT}_n) = \ci(\mathrm{PT}_n^\circ):(\prod z_\sigma)^\infty$ is then obtained by computing the aforementioned saturation. The resulting ideal is minimally generated by 175 quadratic polynomials. 
\end{example}

Note that if $\cref{conj:toric-quads-gen-ker}$ holds, then for $n \geq 7$ then the set of quadratic binomials~$\cb_n$ from \cref{thm:toricquadratics} can be used as the generating set of $\tilde{\ci}(T_n)$. Since the lifts of the Pl\"ucker relation are quadratic by construction, this suggests a set of quadratic generators of $\ci(\pt_n)$.

\begin{conjecture}
The ideal $\tilde{\ci}(\pt_n^\circ) \subseteq \cc[z_\sigma^\pm]$ is quadratically generated by the lifts $\mathcal{F}$ of the Pl\"ucker relations and the quadratic binomials $\cb_n$ from \cref{thm:toricquadratics}.  
\end{conjecture}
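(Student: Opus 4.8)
The plan is to split the statement into an easy half and a hard half, and to reduce the hard half to \cref{conj:toric-quads-gen-ker}. The easy half is that $\mathcal{F}$ and $\cb_n$ consist only of quadrics: the lifts $F_{ijkl,\alpha,\beta}$ are quadratic trinomials by \cref{prop:plucker-lifts}, and the binomials of \cref{thm:toricquadratics} are quadratic by construction. For the hard half --- that $\mathcal{F}\cup\cb_n$ generates $\tilde{\ci}(\pt_n^\circ)$ --- combine \cref{thm:fullideal} with \cref{cor:just-lift-pluckers} to obtain $\tilde{\ci}(\pt_n^\circ)=\tilde{\ci}(T_n)+\mathcal{L}(\psi^\ast(\tilde{\ci}(\gr(2,n))))$, and recall from \cref{prop:plucker-lifts} that $\mathcal{F}$ generates $\mathcal{L}(\psi^\ast(\tilde{\ci}(V)))=\mathcal{L}(\psi^\ast(\tilde{\ci}(\gr(2,n))))$. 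So the whole claim reduces to showing that the lattice ideal $\tilde{\ci}(T_n)\subseteq\cc[z_\sigma^\pm]$ is generated by $\cb_n$. In the Laurent ring a binomial $z^{u^+}-z^{u^-}$ differs from $z^{u}-1$ by a unit, and $z^{u+v}-1=z^{u}(z^{v}-1)+(z^{u}-1)$ together with $z^{-u}-1=-z^{-u}(z^{u}-1)$ shows that any $\zz$-spanning set of $\ker_\zz(A_n)$ gives a generating set of $\tilde{\ci}(T_n)$. Hence everything reduces to: $\cb_n$ spans $\ker_\zz(A_n)$ over $\zz$ for $n\geq 7$, which is precisely \cref{conj:toric-quads-gen-ker}.

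It remains to prove \cref{conj:toric-quads-gen-ker}. Since $\cb_n\subseteq\ker_\zz(A_n)$ already holds, and $\dim\ker_\zz(A_n)=(n-2)!-\binom{n-1}{2}+1$ by \cref{prop:toricdim}, only the lower bound $\dim\,\mathrm{span}_\zz(\cb_n)\geq(n-2)!-\binom{n-1}{2}+1$ is at stake. I would argue by induction on $n$, with $n=7,8,9$ as the computationally verified base cases. It helps to observe that the columns of $A_n$ are exactly the edge-incidence vectors of the Hamiltonian cycles of $K_n$ through the edge $\{1,2\}$, so $T_n$ is the toric variety attached to the face $x_{\{1,2\}}=1$ of the symmetric traveling-salesman polytope, and the quadratic binomials of \cref{thm:toricquadratics} record the two ways of reconnecting a pair of disjoint sub-paths that two Hamiltonian cycles share. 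For the inductive step, stratify the variables $z_\sigma$ by the position of the letter $n$ in $\sigma$; using \cref{lemma:lift-binomials} one lifts a generating set of $\ker_\zz(A_{n-1})$ into $\ker_\zz(A_n)$, and it then suffices to show that these lifts, together with the binomials of $\cb_n$ that genuinely move $n$ --- those of the first type with $n\in\{i,j,k,l\}$ and those of the second type with $\beta\neq\emptyset$ --- span all of $\ker_\zz(A_n)$. Concretely one would associate to each ``$n$-moving square move'' a distinguished member of $\cb_n$ and check linear independence modulo the lifted sublattice by pairing against a suitable family of indicator functionals on the $z_\sigma$.

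The main obstacle is exactly this spanning count. That the analogous statement fails for $n=5$ and $n=6$ --- in \cref{example:toric-5} the kernel is generated by a single cubic, and for $n=6$ the quadratics span only a $14$-dimensional subspace of the $15$-dimensional $\ker_\zz(A_6)$ --- means that any valid proof must genuinely detect the threshold $n\geq 7$; in particular an induction whose step does not use the size of the base case cannot be right, and the real difficulty is to rule out a similar one- or two-dimensional deficiency at higher $n$. An appealing way to sidestep the count would be to establish a structural property of the polytope spanned by the columns of $A_n$ --- a flag regular unimodular triangulation, a quadratic Gr\"obner basis of $\tilde{\ci}(T_n)$ for some term order, or a presentation of $T_n$ as a toric fibre product over the ``forget $n$'' morphism --- any of which forces quadratic generation of the lattice ideal. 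But proving such a property for this traveling-salesman face polytope looks at least as hard as the direct approach, and would still have to account for why $n=6$ is excluded.
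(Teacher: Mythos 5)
The statement you are proving is presented in the paper as a conjecture, and the paper offers no proof of it; it only remarks that it would follow for $n\geq 7$ from \cref{conj:toric-quads-gen-ker}, which is itself open and verified only computationally for $n=7,8,9$. Your reduction is exactly the paper's: by \cref{thm:fullideal}, \cref{cor:just-lift-pluckers} and \cref{prop:plucker-lifts}, together with the standard fact that in the Laurent ring a $\zz$-spanning set of $\ker_\zz(A_n)$ generates the lattice ideal $\tilde{\ci}(T_n)$ (your telescoping identity $z^{u+v}-1=z^u(z^v-1)+(z^u-1)$ makes this explicit, which the paper leaves implicit), everything comes down to whether $\cb_n$ spans $\ker_\zz(A_n)$. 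That part of your write-up is correct and matches the paper's own reasoning.

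The gap is that you do not prove the spanning statement, and you say so yourself: the inductive step is only a plan (``one would associate \dots and check linear independence''), with no construction of the indicator functionals and no argument that the lifted copy of $\ker_\zz(A_{n-1})$ together with the $n$-moving squares of $\cb_n$ has full rank $(n-2)!-\binom{n-1}{2}+1$. Your own discussion of $n=5,6$ shows why this is delicate: the deficiency at $n=6$ is exactly one-dimensional, so any induction must quantitatively exclude a similar small deficiency at every larger $n$, and nothing in your sketch does this. So the proposal is a correct reduction to an open conjecture plus an honest but incomplete attack on that conjecture; it is not a proof. One further caution: for the conjecture as literally stated, reducing to ``$\cb_n$ spans $\ker_\zz(A_n)$'' is sufficient but not necessary, since the Pl\"ucker lifts $\mathcal{F}$ could in principle supply the missing binomial relations --- as they would have to at $n=6$, where $\cb_6$ provably does not span.
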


Note that \cref{conj:toric-quads-gen-ker} immediately implies this conjecture holds for $n \geq 7$ since we know that the lifts $\mathcal{F}$ together with any generating set for $\tilde{\ci}(T_n)$ suffice to generate $\tilde{\ci}(\pt_n^\circ)$. 

\medskip
\noindent\textbf{Acknowledgements.} The authors would like to thank Bernd Sturmfels for asking the questions that lead to this project and for many helpful comments, and Hadleigh Frost for interesting discussions. Benjamin Hollering was partially supported by the Alexander von Humboldt Foundation. 

\bibliographystyle{plain}
\bibliography{bibliography}
\end{document}